\newcommand{\sign}{\mathrm{sign}}
\newcommand{\EE}{{\mathbb{E}}}
\newcommand{\eps}{\varepsilon}
\newcommand{\FF}{\mathcal{F}}
\newcommand{\EEE}{\mathcal{E}}
\renewcommand{\phi}{\varphi}
\newcommand{\given}{\,|\,}
\def\BState{\State\hskip-\ALG@thistlm}
\newtheorem{theorem}{Theorem}[section]
\newtheorem{proposition}[theorem]{Proposition}
\newtheorem{lemma}[theorem]{Lemma}
\newtheorem{corollary}[theorem]{Corollary}
\theoremstyle{definition}
\numberwithin{equation}{section}
\begin{document}

\begin{frontmatter}
\title{Distributed  function estimation: adaptation
           using minimal communication}
\runtitle{Adaptation with minimal communication}

\begin{aug}
\author{\fnms{Botond} \snm{Szab\'o}\thanksref{t1,t2}\ead[label=e1]{b.t.szabo@math.leidenuniv.nl}},
\and
\author{\fnms{Harry} \snm{van Zanten}\thanksref{t1}\ead[label=e2]{j.h.van.zanten@vu.nl}}

\thankstext{t1}{Research supported by the Netherlands Organization of Scientific Research NWO.}
\thankstext{t2}{The research leading to these results has received funding from the European Research Council under ERC Grant Agreement 320637.}
\runauthor{Szab\'o and Van Zanten}

\affiliation{Leiden University and Vrije Universiteit Amsterdam}

\address{Mathematical Institute\\
Leiden University\\
Niels Bohrweg 1\\
2333 CA Leiden\\
The Netherlands\\
\printead{e1}\\
}

\address{Department of Mathematics\\
Vrije Universiteit Amsterdam\\
De Boelelaan 1111 \\
1081 HV Amsterdam\\
The Netherlands\\
\printead{e2}}
\end{aug}

\begin{abstract}
We investigate whether in a distributed setting, adaptive estimation of a smooth function 
at the optimal rate is possible under minimal communication. It turns out that the answer depends
on the risk considered and on the number of servers over which the procedure is distributed. 
We show that for the $L_\infty$-risk, adaptively obtaining optimal rates under
minimal communication is not possible. For the $L_2$-risk, it is possible over a range 
of regularities that depends on the relation between the number of local servers and the total sample size.
\end{abstract}

%\begin{keyword}[class=MSC]
%\kwd[Primary ]{60K35}
%\kwd{60K35}
%\kwd[; secondary ]{60K35}
%\end{keyword}

%\begin{keyword}
%\kwd{sample}
%\kwd{\LaTeXe}
%\end{keyword}

\end{frontmatter}

\section{Introduction}

Distributed methods have attracted a lot of attention in the statistics and machine learning
communities recently. There are several reasons for this, the most prominent ones 
being that they provide a way of dealing with  large datasets and
with privacy considerations. The theoretical literature on distributed methods is still 
rather minimal at the moment. A number of papers have recently investigated fundamental performance
limits in distributed models, in particular pointing out issues that occur in high-dimensional 
or nonparametric problems, see for instance \cite{zhang2013,kleiner:2014,braverman2016communication,rosen2016,jason:2017,battey:2018,szabo:zanten:2017,barnes2019learning, cai2020distributed}. For example, optimal rates 
in distributed function estimation depend on the amount of communication that is allowed, 
and the relation of that amount with the regularity of the unknown function. 
The lower bounds obtained in 
\cite{szabo:zanten:2018} and  \cite{zhu:2018} and the subsequent adaptation results 
in \cite{szabo:zanten:2018} show that in particular, automatically adapting to the smoothness 
of the unknown function is a complicated issue in communication restricted distributed settings.
In the present paper we study this problem from a different, we think relevant and interesting perspective, not restricting 
communication a priori, but asking for rate-optimal procedures that require minimal communication.

In  distributed estimation problems it is of interest to achieve 
high estimation accuracy, while at the same time limiting communication between servers, or cores, 
since this may give rise to undesirable time loss, costs, or congestion. 
In this paper we investigate this problem
for a basic distributed architecture, where we have $m$ local servers 
over which the data is distributed and that each carry out  a statistical procedure using their local data, independently of each other. 
They communicate their result to a central server that performs some aggregation and 
produces a final estimate of the quantity of interest.
The two goals of high accuracy and little communication are conflicting in this setting. 
It is intuitively clear that to achieve high accuracy it is beneficial to have 
a lot of data in one server, which is only possible if the total number of 
local severs $m$ is small, which we will not assume, or the local servers are allowed to communicate 
a lot of information to the central server, which we will consider to be undesirable.

The problem becomes most interesting if the unknown object is high-, or infinite-dimensional.
To be specific, we will consider a distributed signal estimation problem in which the goal is to 
estimate a function $f \in L_2[0,1]$ with (Besov) regularity $s > 0$. (A precise description of the 
model is given in Section \ref{sec: main}.) 
The best accuracy that can be  achieved with respect to the $L_2$-norm can be described by minimax lower bounds.
In the classical, non-distributed setting the minimax lower bound over Besov balls of regularity $s$ 
is known to be of the order $n^{-s/(1+2s)}$, where $n$ is the sample size, or signal-to-noise ratio
(e.g.\ \cite{gine:nickl:2016}).
Recently established lower bounds for distributed nonparametric methods under communication constraints
(see \cite{szabo:zanten:2018}, \cite{zhu:2018}, and Section \ref{sec: main} ahead) show that this optimal rate can also be achieved by distributed 
methods, but only if each local machine is allowed to communicate at least order $n^{1/(1+2s)}$ bits of 
information to the central machine. This is what the authors of \cite{zhu:2018} call the {\em sufficient regime}. 

A distributed strategy that achieves the rate $n^{-s/(1+2s)}$ under the restriction that the local 
machines communicate at most the minimal order $n^{1/(1+2s)}$ bits is easily constructed (see Theorem \ref{theorem: minimaxL2UB}). 
However, this simple strategy uses knowledge of the regularity $s$ of the unknown signal. The real 
interesting question is whether this can be done adaptively, without knowing $s$. This greatly 
complicates the problem, since we do not only want adaptation to smoothness of the estimator, 
but we also require that the local machines determine the maximally allowed number of bits 
in a purely data-driven manner. 

It turns out that whether or not this is possible for the $L_2$-risk depends on the 
relation between the number of machines $m$ and the total sample size, or signal-to-noise ratio $n$. 
We prove that if $m = n^p$ for some $p \in (0,1/2)$, then: 
\begin{itemize}
\item
There exists a distributed estimator that 
is adaptive over any range of regularities $[s_1, s_2]$ such that 
\[
0 < s_1 < s_2 < \frac1{4p} - \frac12,
\]
achieving the optimal rate and transmitting the minimal amount of bits.
\item
If 
\[
s_2 > s_1 > \frac1{4p} - \frac12
\]
however, then there exists no distributed procedure that achieves the optimal rate for 
every signal $f$ with regularity in $\{s_1, s_2\}$, while transmitting the minimal amount of bits.
\end{itemize}
Stated differently, when considering $L_2$-risk, adaptively achieving the optimal rate using minimal communication 
over a range of regularities $[s_1, s_2]$ is possible if and only if 
\[
(2+4s_2) \log m < {\log n}.
\]
This shows that it is problematic if either the number of machines is too large, or 
the range of regularities to which adaptation is required is too large. 

The adaptive, minimal communication procedure that we propose in the first case
implicitly exploits the fact that for the $L_2$-risk, there is a difference between lower bounds
for estimation and testing, see for instance \cite{ingster2,gine:nickl:2016}.
Indeed, we employ the testing result of \cite{carpentier:2015} 
 to extract sufficient information about the regularity of the unknown signal in the local servers, 
 which we then use in the subsequent estimation procedure.
This approach depends crucially on the fact that we consider the $L_2$-risk. 
For the $L_\infty$-risk there is no difference between testing and estimation rates
and this approach breaks down. 
In fact we prove that for the $L_\infty$-norm, 
adaptive estimation at the optimal rate under minimal communication is never possible!

The impossibility results all derive from the fact that in the local servers, sample
size is too small to extract sufficient information about the regularity of a general signal. 
This suggests that if we restrict to a class of ``nice'' signals for which we 
do have access to such smoothness information from limited data, we should be able to obtain optimal
rates and minimal communication adaptively. 
We prove that this is indeed the case if we consider the class of self-similar functions, 
first introduced in \cite{bull:2012} in the context of nonparametric confidence regions, 
where closely related issues occur.  See also for instance 
\cite{cai:low:04,robins:2006,gine:nickl:2010,bull:2013, szabo:etal:2015, rousseau2016asymptotic}.

The remainder of the paper is organized as follows. In the next section we first present the 
minimax lower bounds under communication restrictions that show that if we want to attain
the optimal rate $n^{-s/(1+2s)}$ for estimating $s$-smooth functions in the distributed setting, 
we need to transmit at least order $n^{1/(1+2s)}$ bits from the local machines to the central one. 
For completeness we show that it is easy to obtain the optimal rate under minimal communication 
if $s$ is known. We also prove that if it is assumed that $s$ belong to some known 
range $(s_0, s_{\rm max})$, then adaptation to smoothness over that range is possible 
while transmitting  order $n^{1/(1+2s_0)}$ bits.
After this we present our main results. Theorems \ref{thm: adaptL2negative} and \ref{thm: adaptL2positive} and Corollary 
\ref{thm: adaptL2positive2}  assert that whether simultaneous adaptation over a range of regularities and minimal 
communication is possible for the $L_2$ risk, depends on the relation between the range of regularities
and the number of local machines. Theorem \ref{thm: adaptLinftynegative} shows that simultaneous
adaptation and minimal communication is not possible when $L_\infty$ risk is considered. Finally, 
Theorem \ref{thm: adaptLinftypositive} asserts that it is possible under a self-similarity assumption. 
Proofs and auxiliary results are deferred to Section 3--5 and the appendices.

\subsection{Notations}
For two positive sequences $a_n,b_n$ we use the notation $a_n\lesssim b_n$ if there exists an universal positive constant $C$ such that $a_n\leq C b_n$. Along the lines $a_n\asymp b_n$ denotes that $a_n\lesssim b_n$ and $b_n\lesssim a_n$ hold simultaneously. In the proofs we use the notation $C$ and $c$ for universal constants which value can differ from line to line and denote by $\# S$ or $|S|$ the cardinality of the finite set $S$. Furthermore, let $l(Y)$ denote the length of a binary string $Y$, and $\log x$ denote the logarithm with base $2$, i.e. $\log_2 x$.

\section{Main results}\label{sec: main}
In our analysis we work with the distributed Gaussian white noise model 
also considered for instance in \cite{szabo:zanten:2017}, \cite{zhu:2018}, 
and which can be seen as an idealized version of the nonparametric regression model. 
Our results can in principle be derived in the regression context as well, similar as 
we did in \cite{szabo:zanten:2018}. However, since the additional technical issues would seriously lengthen 
the already long paper and would add no fundamental insight, we formulate everything 
in the signal in white noise setting in this paper.

We assume that we have $m$ machines and in the $i$th machine we observe the random function $X^{(i)}_t$ given by the stochastic differential equation 
\begin{align}
d X^{(i)}_t=f_0(t)dt+\sqrt{\frac{m}{n}}dW_{t}^{(i)},\quad t\in[0,1],\,i=1,2,...,m,\label{model: GWN}
\end{align}
where $W^{(1)},...,W^{(m)}$ are independent standard Wiener processes and $f_0$ is the unknown function of interest. It is common to assume that the unknown true function $f_0$ belongs to some regularity class. We work in our analysis with Besov smoothness classes, more specifically we assume that $f_0\in B_{2,\infty}^s(L)$ or $f_0\in B_{\infty,\infty}^s(L)$, see Appendix \ref{sec: wavelets} for a rigorous introduction of these smoothness classes. The first class is of Sobolev type, while the second one is H\"older type. 

Parallel to each other, the local machines  carry out a local statistical procedure and transmit the results to the central machine, which provides  the final inference about the functional parameter of interest $f_0$ by somehow aggregating the local outcomes. 
There are however constraints on the  communication  between the local and global machines. 
Local machine $i$ is allowed to send at most $B^{(i)}$ bits (on average) to the central machine. 
The central machine will then collect the transmitted bits from the local computers and combine them to a global, aggregated answer. 
More formally, for a target function class $\FF$, we write $\hat{f}\in \mathcal{F}_{\rm dist}(B^{(1)},...,B^{(m)}; \FF)$ if
$\hat f_n$ is a measurable function of messages of length $\hat{B}^{(i)}$ sent from the local machines and
for every $f_0\in\mathcal{F}$ it holds that $E_{f_0}\hat{B}^{(i)}\leq B^{(i)}$ for every $i$.
For simplicity, we will focus on the case $B^{(1)} = \dots = B^{(m)}$ that the communication restriction is the same
for every local machine.

\subsection{Distributed minimax rates}
As a first step we give lower bounds for the minimax risk for the $L_2$-norm. We assume that in each 
local machine we have the same communication budget, i.e. $B^{(1)}=...=B^{(m)}=B$. 
Then the corresponding minimax $L_2$ estimation rates are the following, see also \cite{szabo:zanten:2018,zhu:2018}.

\begin{theorem}\label{thm: minimaxL2LB}
Let $s, L >0$.
\begin{itemize}
\item
If $B\geq n^{1/(1+2s)}/\log m$:
\begin{align*}
\inf_{\hat{f}\in\mathcal{F}_{dist}(B,...,B; B_{2,\infty}^s(L))}\sup_{f_0\in B_{2,\infty}^s(L)} E_{f_0}\|\hat{f}-f_0\|_2^2 \gtrsim n^{-\frac{2s}{1+2s}}.
\end{align*}
\item
If $(n\log (n)/m^{2+2s})^{1/(1+2s)} \leq B \leq  n^{1/(1+2s)}/\log m$:
\begin{align*}
\inf_{\hat{f}\in\mathcal{F}_{dist}(B,...,B; B_{2,\infty}^s(L))}\sup_{f_0\in B_{2,\infty}^s(L)} E_{f_0}\|\hat{f}-f_0\|_2^2 \gtrsim   \Big(\frac{B\log n}{n^{1/(1+2s)}}\Big)^{-\frac{s}{1+s}} n^{-\frac{2s}{1+2s}}.
\end{align*}
\item
If $B\leq (n\log (n)/m^{2+2s})^{1/(1+2s)} $:
\begin{align*}
\inf_{\hat{f}\in\mathcal{F}_{dist}(B,...,B)}\sup_{f_0\in B_{2,\infty}^s(L)} E_{f_0}\|\hat{f}-f_0\|_2^2 \gtrsim   \Big(\frac{n}{m\log n}\Big)^{-\frac{2s}{1+2s}}.
\end{align*}
\end{itemize}
\end{theorem}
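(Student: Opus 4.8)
The plan is to obtain all three displays from a single Bayesian construction, following the route used for distributed minimax lower bounds in \cite{szabo:zanten:2018,zhu:2018}. Fix a resolution level $J$, write $N=2^J$, and consider signals $f_\beta=\rho\sum_{k=1}^{N}\beta_k\psi_{J,k}$ indexed by $\beta\in\{-1,1\}^{N}$, where the $\psi_{J,k}$ are the $L_2$-normalized level-$J$ wavelets and $\rho\asymp L\,N^{-(1+2s)/2}$ is the largest amplitude for which every $f_\beta$ lies in $B^s_{2,\infty}(L)$; this calibration is routine from the wavelet characterization in Appendix~\ref{sec: wavelets}. Equip $\beta$ with the uniform prior. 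In the sequence form of \eqref{model: GWN} machine $i$ observes $X^{(i)}_k=\rho\beta_k+\sqrt{m/n}\,Z^{(i)}_k$ with i.i.d.\ standard normal $Z^{(i)}_k$, transmits a binary string $Y^{(i)}$ with $E_{f_0}\,l(Y^{(i)})\le B$, and the central machine produces $\hat f$ from $Y=(Y^{(1)},\dots,Y^{(m)})$. Since the coordinates of $\beta$ are independent, a standard Assouad argument combined with Pinsker's inequality shows that
\[
\sup_{f_0\in B^s_{2,\infty}(L)}E_{f_0}\|\hat f-f_0\|_2^2\ \gtrsim\ N\rho^2\ \asymp\ N^{-2s}
\]
for every distributed estimator, provided $\sum_{k=1}^{N}I(\beta_k;Y)\le c_0N$ with $c_0$ a small absolute constant. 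Everything thus reduces to bounding the aggregated information $\sum_k I(\beta_k;Y)$ under the communication budget and then taking $J$ as large as this bound permits.

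I would obtain the three regimes from three upper bounds on $\sum_k I(\beta_k;Y)$, each binding in its range. First, ignoring communication altogether, the centralized experiment already gives $\sum_k I(\beta_k;Y)\le\sum_k I(\beta_k;X^{(1)},\dots,X^{(m)})\lesssim n\rho^2N$; choosing $N\asymp n^{1/(1+2s)}$ makes this $\lesssim N$ and yields the rate $n^{-2s/(1+2s)}$, which, by monotonicity of the risk in $B$, is the first display for all $B$ above the threshold. Second, in the intermediate range I would invoke a strong data processing inequality for the per-machine Gaussian channel: $\beta\to X^{(i)}\to Y^{(i)}$ is Markov and the $Y^{(i)}$ are conditionally independent given $\beta$, so $\sum_k I(\beta_k;Y)\le I(\beta;Y)\le\sum_i I(\beta;Y^{(i)})$; the channel $\beta\mapsto X^{(i)}$ is a product of binary-input Gaussian channels of per-coordinate signal-to-noise ratio $\asymp n\rho^2/m$, and since the SDPI tensorizes this gives $I(\beta;Y^{(i)})\lesssim (n\rho^2/m)\,I(X^{(i)};Y^{(i)})\lesssim (n\rho^2/m)\,E\,l(Y^{(i)})$, hence $\sum_k I(\beta_k;Y)\lesssim n\rho^2B$ up to a logarithmic factor. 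Balancing against $N$ forces $N^{2+2s}\asymp nB\log n$, i.e.\ $N^{-2s}\asymp (nB\log n)^{-s/(1+s)}$, which after substituting $\rho^2\asymp N^{-(1+2s)}$ and rearranging is exactly the second display. Third, when communication is extremely scarce I would discard the SDPI and keep only the crude bound $\sum_k I(\beta_k;Y)\le I(\beta;Y)\le\sum_i H(Y^{(i)})\lesssim mB$ (again up to logarithmic overhead), together with the choice $N\asymp (n/(m\log n))^{1/(1+2s)}$, for which the centralized information $n\rho^2N$ still exceeds $N$ by a large factor while $mB\lesssim N$; this delivers the third display.

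The main difficulty is the information-theoretic bookkeeping rather than the construction. Two points need care. First, the strong data processing inequality must be set up in the product/vector form with the correct dependence on the per-coordinate signal-to-noise ratio $n\rho^2/m$, and one must check it is applicable — i.e.\ that $n\rho^2/m\lesssim 1$ — precisely on the range of $B$ where it is used. Second, converting the expected-length constraint $E\,l(Y^{(i)})\le B$ into a bound on the entropy $H(Y^{(i)})$, and thence on the mutual information, costs a logarithmic factor (of order $\log n$, essentially the entropy of the code length); this is the source of the $\log n$ in the second and third displays and of the $\log m$ in the threshold $B\asymp n^{1/(1+2s)}/\log m$ of the first, and these factors must be tracked carefully to verify that the three parameter choices match up at the boundaries $B\asymp n^{1/(1+2s)}/\log m$ and $B\asymp (n\log n/m^{2+2s})^{1/(1+2s)}$. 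The Assouad/Pinsker reduction and the Besov-ball calibration of $\rho$ are routine, and the whole scheme parallels the arguments of \cite{szabo:zanten:2018,zhu:2018}.
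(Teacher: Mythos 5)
Your proposal is correct and follows essentially the same route as the paper: the identical hypercube prior at a single resolution level with amplitude $\rho^2\asymp N^{-(1+2s)}$, an information-theoretic reduction (the paper uses the Fano variant of Theorem~\ref{lem: Fano:Wainwright} where you use Assouad, which is immaterial), and the same two-branch bound on the mutual information --- a tensorized strong data processing inequality giving $I(F;Y)\lesssim (n\rho^2/m)\log(md)\sum_i H(Y^{(i)})$, capped by the unconstrained bound $n\rho^2 N$, combined with $H(Y^{(i)})\le 2E\,l(Y^{(i)})+1$ --- exactly as packaged in Lemma~\ref{lem: lem5_redo} and Lemma~\ref{lem: Shannon}. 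The one small inaccuracy is attributional rather than substantive: the logarithmic factor in the second and third displays comes from the likelihood-ratio truncation inside the SDPI (the $\log(md)$ in Lemma~\ref{lem: lem5_redo}), not from the expected-length-to-entropy conversion, which costs only a constant; your regime boundaries and parameter choices nevertheless match the paper's fixed-point definition of $\delta_n$ up to the same logarithmic slack present in the paper itself.
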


\begin{proof}
See Section \ref{sec: minimaxL2LB}
\end{proof}

The result shows that it is indeed only possible to obtain the optimal rate $n^{-s/(1+2s)}$ 
over Besov balls of regularity $s$ if, up to a logarithmic factor, every machine is allowed to 
transmit order $n^{1/(1+2s)}$ bits to the central machine. 
The following theorem shows that this result is indeed sharp (up to log-factors), i.e.\ if  
order $n^{1/(1+2s)}$ bits are allowed, then the optimal rate can indeed be achieved with some procedure.
In fact, the theorem considers the first two cases of the preceding one, i.e.\ 
$(n\log (n)/m^{2+2s})^{1/(1+2s)}\leq B$. The third case is not interesting 
since in that case distributed methods do not perform better  than any standard technique applied 
on a single, local server.

\begin{theorem}\label{theorem: minimaxL2UB}
Let $s, L > 0$, $m\leq n$. Then there exists a distributed estimator $\hat f\in\mathcal{F}_{dist}(B,\ldots,B; B_{2,\infty}^{s}(L))$ satisfying:
\begin{itemize}
\item for $B\geq n^{1/(1+2s)}/\log n$:
\begin{align*}
\sup_{f_0\in B_{2,\infty}^{s}(L)}\mathbb{E}_{f_0}\|\hat{f}-f_0\|_2^2\lesssim n^{-\frac{2s}{1+2s}}\vee (B/\log n)^{-2s},
\end{align*}
\item for $(n\log (n)/m^{2+2s})^{1/(1+2s)}\vee \log n\leq B \leq  n^{1/(1+2s)}/\log n$:
\begin{align*}
\sup_{f_0\in B_{2,\infty}^{s}(L)}\mathbb{E}_{f_0}\| \hat{f}-f_0\|_2^2\lesssim  M_n \Big(\frac{n^{1/(1+2s)}}{B\log n}\Big)^{\frac{2s}{2+2s}}n^{-\frac{2s}{1+2s}},
\end{align*}
with $M_n=(\log n)^{2s} $.
\end{itemize}
\end{theorem}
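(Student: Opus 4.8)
The plan is to construct the estimator explicitly in the wavelet domain and to reduce the problem to a single bias-variance optimisation over two tuning parameters: a truncation level $L_n$ and a ``redundancy'' $m'$ counting how many local machines cooperate on each retained wavelet coefficient. Fix a sufficiently regular orthonormal wavelet basis $\{\psi_{lk}\}$ as in Appendix~\ref{sec: wavelets}; writing $f_0=\sum_{l\ge 0}\sum_k f_{lk}\psi_{lk}$, membership in $B_{2,\infty}^s(L)$ gives $\sum_k f_{lk}^2\lesssim L^2 2^{-2ls}$ for every $l$. Machine $i$ forms the empirical coefficients $\hat f^{(i)}_{lk}=\int\psi_{lk}\,dX^{(i)}=f_{lk}+\sqrt{m/n}\,Z^{(i)}_{lk}$, with i.i.d.\ standard normal $Z^{(i)}_{lk}$. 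One fixes, non-randomly, for each index $(l,k)$ with $l\le L_n$ a set $S_{lk}$ of $m'$ machines, spreading the $2^{L_n}m'$ reporting tasks as evenly as possible over the $m$ machines; each machine in $S_{lk}$ clips $\hat f^{(i)}_{lk}$ to a window of half-width of order $\sqrt{\log n}$, quantises it with $b\asymp\log n$ bits (deterministically, or with a dither making the report unbiased), and transmits it. The central machine averages the reports of each coefficient into $\bar f_{lk}$ and outputs $\hat f=\sum_{l\le L_n}\sum_k\bar f_{lk}\psi_{lk}$. Since the assignment is non-random, machine $i$'s message has a deterministic length of order $(2^{L_n}m'/m)b$, so $\hat f\in\mathcal F_{dist}(B,\dots,B;B_{2,\infty}^s(L))$ holds once $2^{L_n}m'\log n\lesssim mB$.

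For the risk, Parseval's identity gives
\[
\EE_{f_0}\|\hat f-f_0\|_2^2=\sum_{l>L_n}\sum_k f_{lk}^2+\sum_{l\le L_n}\sum_k\EE_{f_0}\big(\bar f_{lk}-f_{lk}\big)^2
\lesssim 2^{-2sL_n}+2^{L_n}\Big(\frac{m}{nm'}+\frac{\log n}{2^{2b}}+r_n\Big),
\]
where the first term is the truncation bias (the Besov bound), $m/(nm')$ is the variance of an average of $m'$ independent coefficient estimates, $\log n/2^{2b}$ bounds the quantiser's squared cell width, and $r_n$ is the $L_2$-loss on the event that some $\hat f^{(i)}_{lk}$ leaves its clipping window; by Gaussian tails and $m\le n$ that event has super-polynomially small probability, so $r_n$ and, once $b$ is a large enough multiple of $\log n$, also $\log n/2^{2b}$ are negligible (this is why $O(\log n)$ bits per coefficient suffice). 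It then remains to minimise $2^{-2sL_n}+2^{L_n}m/(nm')$ over $1\le m'\le m$, $2^{L_n}\ge 1$ and $2^{L_n}m'\log n\lesssim mB$.

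When $B\ge n^{1/(1+2s)}/\log n$, take $m'=m$ (every retained coefficient reported by all machines) and $2^{L_n}=\min\{B/\log n,\ n^{1/(1+2s)}\}$: the budget is met, the variance equals $2^{L_n}/n\le n^{-2s/(1+2s)}$, and the bias $2^{-2sL_n}=n^{-2s/(1+2s)}\vee(B/\log n)^{-2s}$ dominates, which is the first display. When $(n\log n/m^{2+2s})^{1/(1+2s)}\vee\log n\le B\le n^{1/(1+2s)}/\log n$, one takes $m'$ as large as the budget allows, $m'=m\wedge\big(mB/(2^{L_n}\log n)\big)$, and then optimises $L_n$; in the interior of the range this yields $2^{L_n}\asymp(nB/\log n)^{1/(2+2s)}$, $m'\asymp mB/(2^{L_n}\log n)$ and, after eliminating $m'$, the balanced value $(nB/\log n)^{-s/(1+s)}$, which a short rearrangement bounds by $M_n\big(n^{1/(1+2s)}/(B\log n)\big)^{2s/(2+2s)}n^{-2s/(1+2s)}$ with $M_n=(\log n)^{2s}$; near the lower endpoint $m'=1$, the scheme degenerates to a plain partition of the coefficients among the $m$ machines, giving the single-machine rate $(n/m)^{-2s/(1+2s)}$, still within the stated bound. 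The constraint $m'\le m$ is precisely $B\lesssim n^{1/(1+2s)}/\log n$, while $m'\ge1$ together with the need to improve on the trivial rate give $B\gtrsim(n\log n/m^{2+2s})^{1/(1+2s)}\vee\log n$, so these feasibility conditions reproduce exactly the two regimes in the statement.

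The main obstacle is not the optimisation, which is elementary, but the quantisation layer: one must exhibit a finite-alphabet (if need be, dithered) quantiser whose output is essentially unbiased for $f_{lk}$ with variance of order the sampling variance $m/(nm')$ while spending only $O(\log n)$ bits, and then control the aggregated $L_2$-contribution $2^{L_n}r_n$ of the rare clipping overflow uniformly over $B_{2,\infty}^s(L)$. A second, bookkeeping, point is to realise the deterministic coefficient-to-machine assignment with every machine carrying $O(2^{L_n}m'/m)$ tasks, so that the per-machine budget $B$ is respected on the nose; it is this accounting that produces the feasibility constraints delimiting the two regimes.
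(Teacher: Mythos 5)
Your construction is essentially the paper's: under your budget constraint $2^{L_n}m'\log n\lesssim mB$ the pair $(L_n,m')$ with $m'=m/\eta$ and $2^{L_n}=\eta B/\log n$ is exactly the paper's grouping of the $m$ machines into $\eta=(n^{1/(1+2s)}\log n/B)^{(1+2s)/(2+2s)}$ blocks, your optimised $2^{L_n}\asymp(nB/\log n)^{1/(2+2s)}$ coincides with the paper's truncation level, and the quantisation layer you flag is handled there by the deterministic $O(\log n)$-bit binary truncation of Algorithm~\ref{alg: transmit:number} together with Lemma~\ref{lem: approx} (error $\le n^{-1/2}$ off an exponentially small clipping event), so no dithering or unbiasedness is actually needed. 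The proof is correct and follows the same route.
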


\begin{proof}
See Section \ref{sec: minimaxL2UB}
\end{proof}

One can also derive similar matching lower and upper bounds for the $L_{\infty}$-norm for $f_0\in B_{\infty,\infty}^s(L)$ in case of the Gaussian white noise model, as in \cite{szabo:zanten:2018} where the nonparametric regression model was considered. Since our focus in this paper is not on deriving minimax rates, we have deferred this result to Section \ref{sec: minimaxLinftyLB} in the appendix.

\subsection{Simultaneous adaptation to smoothness and minimal communication}\label{sec: adapt:GWN}

In view of the preceding two theorems we can conclude that 
when the goal is to estimate $s$-smooth functions at the rate $n^{-s/(1+2s)}$, the 
 optimal, minimal number of transmitted bits is $n^{1/(1+2s)}$ (up to a logarithmic factor). Transmitting less bits will result in (polynomially) sub-optimal convergence rate for any distributed method, while by transmitting at least the optimal amount of bits one can construct distributed estimators reaching  the convergence rate of non-distributed techniques.

\subsubsection{Adaptation in $L_2$}
The procedure $\hat f$ exhibited in (the proof of) Theorem \ref{theorem: minimaxL2UB} has the desirable property that 
if $f_0 \in B_{2,\infty}^{s}(L)$, then, up to log-factors, using the minimal communication it achieves the optimal rate 
$n^{-s/(1+2s)}$. This procedure is, however, not adaptive: it uses the knowledge of the regularity level $s$ 
of the unknown function.  
In this section we investigate the more relevant question  under which conditions can we  simultaneously achieve the optimal 
convergence rate and minimal communication without using any information about the smoothness of the truth.

If we are willing to assume that the true regularity is at least $s\geq s_0$ for some known $s_0>0$
and are in addition willing to allow order $n^{{1}/({1+2s_0})}$  bits to be communicated between 
the local and the central machines, then it is straightforward to achieve adaptation to smoothness.

\begin{proposition}\label{prop: alg:adapt:s0}
Let $s_{\max}>s_0>0$, $L > 0$, $m\leq n$, and $B_0=n^{1/(1+2s_0)}\log n$. Then there exists a distributed estimator $\hat f\in\mathcal{F}_{dist}(B_0,\ldots,B_0; B_{2,\infty}^{s}(L))$ for all $s\in [s_0,s_{\max}]$ satisfying that
\begin{align*}
\sup_{s\in [s_0,s_{\max}]}\sup_{f_0\in B_{2,\infty}^{s}(L)}n^{\frac{2s}{1+2s}}\mathbb{E}_{f_0}\|\hat{f}-f_0\|_2^2=O(1).
\end{align*}
\end{proposition}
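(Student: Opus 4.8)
The plan is to reduce everything to the equivalent Gaussian sequence model and to exploit that the budget $B_0=n^{1/(1+2s_0)}\log n$ is calibrated to the \emph{worst} regularity $s_0$ in the range: it is exactly enough for every local machine to transmit quantized versions of \emph{all} of its empirical wavelet coefficients up to a resolution level $L^*$ with $2^{L^*}\asymp n^{1/(1+2s_0)}$, which already dominates the oracle resolution $2^{l_s}\asymp n^{1/(1+2s)}$ for every $s\in[s_0,s_{\max}]$. After aggregation the central machine then effectively observes the non-distributed white-noise sequence model at the full noise level $n^{-1/2}$, truncated at level $L^*$, and it only remains to run on it any estimator over the Besov$_{2,\infty}$ scale that adapts at the optimal rate without a logarithmic penalty.

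Concretely, fix an $S$-regular compactly supported wavelet basis $\{\psi_{lk}\}$ with $S>s_{\max}$; in this basis \eqref{model: GWN} is equivalent to machine $i$ observing $Y^{(i)}_{lk}=f_{0,lk}+\sqrt{m/n}\,\xi^{(i)}_{lk}$ with i.i.d.\ standard normal $\xi^{(i)}_{lk}$, and $f_0\in B_{2,\infty}^s(L)$ is characterised by $\sum_k f_{0,lk}^2\lesssim L^2 2^{-2ls}$ for all $l\ge 0$. Each local machine keeps the coefficients with $l\le L^*$, clips each one to a window $[-T,T]$ with $T\asymp 1+\sqrt{(m/n)\log n}$ (so large that, for $f_0$ in the Besov ball, the clipping bias of $Y^{(i)}_{lk}$ is $O(n^{-1})$ by Gaussian tail bounds), applies randomised uniform quantization with step $\delta\asymp\sqrt{m/n}$, and transmits the resulting bit string. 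Since $m\le n$, each quantized value costs only $\lesssim\log(T/\delta)\le\tfrac12\log n+O(\log\log n)$ bits, so with an appropriate choice of the constant in $2^{L^*}\asymp n^{1/(1+2s_0)}$ the string length is \emph{deterministically} at most $B_0$, whence the resulting estimator $\hat f$ lies in $\mathcal{F}_{dist}(B_0,\dots,B_0;B_{2,\infty}^s(L))$ for every $s\in[s_0,s_{\max}]$.

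The central machine forms $\bar Y_{lk}=m^{-1}\sum_{i=1}^m Q\big(Y^{(i)}_{lk}\big)$ for $l\le L^*$. Using that randomised quantization is conditionally unbiased with conditional variance $\lesssim\delta^2$ one gets $\bar Y_{lk}=f_{0,lk}+\eta_{lk}$ with $|\E\eta_{lk}|=O(n^{-1})$ and $\E\eta_{lk}^2\lesssim\delta^2/m+1/n\lesssim 1/n$, so up to a mean-$O(n^{-1})$, variance-$O(1/n)$ perturbation the central machine observes exactly the sequence model at noise level $n^{-1/2}$ truncated at level $L^*$. To this I would apply a log-free adaptive device over the Besov$_{2,\infty}$ scale — for instance a blockwise James--Stein / SURE shrinkage estimator, or Lepski's method applied to the projection estimators $\hat f_l=\sum_{l'\le l}\sum_k\bar Y_{l'k}\psi_{l'k}$ over the $O(\log n)$ candidate levels $\underline L\le l\le L^*$ with $2^{\underline L}\asymp n^{1/(1+2s_{\max})}$. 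For $f_0\in B_{2,\infty}^s(L)$, $s\in[s_0,s_{\max}]$, the oracle level $l_s$ lies in the candidate range, there the squared bias $\sum_{l>l_s}\sum_k f_{0,lk}^2\lesssim 2^{-2l_s s}$ and the variance $\lesssim 2^{l_s}/n$ are balanced, and the standard oracle inequality for the chosen device yields $\E_{f_0}\|\hat f-f_0\|_2^2\lesssim n^{-2s/(1+2s)}$ uniformly over $s\in[s_0,s_{\max}]$ and $f_0\in B_{2,\infty}^s(L)$; the extra $O(1/n)$-per-coordinate variance of $\eta_{lk}$ is absorbed into the usual variance term, and the squared-bias contribution of the $O(n^{-1})$ mean terms, summed over the $\lesssim n^{1/(1+2s_0)}$ retained coordinates, is $O\big(n^{1/(1+2s_0)-2}\big)=o\big(n^{-2s/(1+2s)}\big)$, hence negligible. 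This gives the claim.

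I expect the adaptation step itself to be essentially classical; the only point requiring care is the quantization/clipping bookkeeping, where one must simultaneously (a) force the transmitted string length to stay \emph{deterministically} below $B_0$ (which is what fixes the constants in $L^*$ and $\delta$), (b) keep the clipping and quantization errors uniformly negligible against the target rate over the whole Besov ball \emph{and} the whole range $[s_0,s_{\max}]$, and (c) ensure that the slightly non-Gaussian, perturbed observations $\bar Y_{lk}$ still feed cleanly into the log-free oracle inequality. Point (c) I would handle by a coupling that writes $\bar Y_{lk}$ as the exact Gaussian observation at noise level $n^{-1/2}$ plus an independent mean-$O(n^{-1})$, variance-$O(1/n)$ perturbation, and absorbs the perturbation into the variance term of the oracle bound.
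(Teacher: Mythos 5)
Your proposal is correct and follows essentially the same route as the paper: since the budget $B_0=n^{1/(1+2s_0)}\log n$ suffices for every machine to transmit quantized versions of all empirical wavelet coefficients up to resolution $2^{L^*}\asymp n^{1/(1+2s_0)}$ at $O(\log n)$ bits each, the central machine recovers (up to negligible quantization perturbations) the non-distributed sequence model at noise level $n^{-1/2}$ truncated at $L^*$, to which a standard adaptive device (the paper cites Lepski's method) is applied. Your extra care about log-free adaptation and the clipping/quantization bookkeeping is a more detailed version of what the paper handles via its Algorithm~\ref{alg: transmit:number} and Lemma~\ref{lem: approx}, but it is not a different argument.
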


\begin{proof}
See Section \ref{sec: alg:adapt:s0}
\end{proof}

The problem with the above method is that it always transmits a multiple of $n^{{1}/{1+2s_0}}\log n$ bits, which can be  substantially more than the optimal $n^{1/(1+2s)}$ if the true smoothness $s$ happens to be larger 
than the assumed lower bound $s_0$. The question naturally arises: is it possible to achieve adaptation to 
smoothness while at the same time automatically transmitting the minimal amount of bits?

 We show that in case of the $L_2$-norm one can only adapt up to a limited range of smoothness levels (depending on the number of local machines), and beyond that one will achieve sub-optimal rates (where the rate is sub-optimal by a polynomial factor).

\begin{theorem}\label{thm: adaptL2negative}
Suppose that $m=n^p$ for some $p\in (0,1/2)$. Then for any regularity parameters $s_2>s_1>1/(4p)-1/2$ there does not exist 
a distributed method which adapts to the number of transmitted bits and at the same time achieves the minimax risk as well, i.e. it is not possible to simultaneously obtain a distributed method with $\hat{B}^{(i)}\leq n^{1/(1+2s_1)+\eps_1}\log n$ and for $l=1,2$ that
\begin{align}
&\sup_{i\in\{1,...,m\}}\sup_{f_0\in  B_{2,\infty}^{s_l}(L)}  E_{f_0}^{(i)} \hat{B}^{(i)}\lesssim n^{\frac{1}{1+2s_l}+\eps_1}\quad\text{and}\label{eq:adapt:1}\\
&\sup_{f_0\in  B_{2,\infty}^{s_l}(L)} E_{f_0} \| \hat{f}-f_0\|_2^2\lesssim n^{-\frac{2s_l}{1+2s_l}+\eps_2},\label{eq:adapt:2}
\end{align}
for some small enough constants $\eps_1,\eps_2>0$ depending only on $s_1,s_2$ and $p$. 
\end{theorem}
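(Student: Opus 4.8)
The plan is to establish this impossibility result via a testing/information-theoretic lower bound argument, exploiting a tension: a procedure that adapts to both smoothness levels must, on signals of the higher smoothness $s_2$, transmit only $\approx n^{1/(1+2s_2)}$ bits per machine, but this is too few bits to distinguish signals of regularity $s_2$ from certain "hard" perturbations that live on the $s_1$-scale, so the estimator is forced to be suboptimal on the $s_1$-class. The core is a reduction to a two-point (or many-point) hypothesis testing problem at the level of the aggregated transcript.

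**First I would** set up the least-favorable priors. For the $s_2$-problem, take $f_0 = 0$ (or a fixed smooth representative in $B_{2,\infty}^{s_2}(L)$). For the $s_1$-problem, construct a mixture prior: pick a resolution level $J$ with $2^J \asymp n^{1/(1+2s_1)}$, and consider signals of the form $f_\xi = \rho \sum_{k} \xi_k \psi_{Jk}$ with $\xi_k \in \{0,1\}$ i.i.d.\ Bernoulli($1/2$) (or $\pm 1$ Rademacher), and amplitude $\rho \asymp 2^{-J(s_1+1/2)}$ so that $f_\xi \in B_{2,\infty}^{s_1}(L)$ with the squared $L_2$-norm of order $2^J \rho^2 \asymp n^{-2s_1/(1+2s_1)}$. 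The key quantitative fact is that in machine $i$ the local signal-to-noise ratio per coordinate is $\rho^2 n/m \asymp 2^{-J(1+2s_1)} n/m = n^{2s_1/(1+2s_1)}\cdot n^{-1} \cdot n /m$; wait — more carefully, $\rho^2 (n/m) \asymp (1/m) \cdot n^{1/(1+2s_1)}/2^J \cdot \ldots$, so that the local per-coordinate SNR is $\asymp 1/m = n^{-p}$, i.e.\ each individual coefficient is essentially undetectable in a single machine, and detecting the {\em support pattern} requires aggregating information across coordinates and machines.

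**Next**, I would invoke the now-standard distributed information bounds (the same machinery underlying Theorem \ref{thm: minimaxL2LB}, e.g.\ the strong data processing / mutual information arguments of \cite{szabo:zanten:2018,zhu:2018}): if each machine transmits at most $B$ bits on signals drawn from the $s_1$-mixture, then the mutual information between $\xi$ and the full transcript is at most $\sum_i I(\xi; Y^{(i)}) \lesssim \sum_i B^{(i)} \cdot (\text{per-bit information gain})$, and the per-bit gain is governed by the local SNR $n/(m 2^J \rho^{-2}\cdots)$. Plugging in $B \asymp n^{1/(1+2s_1)+\eps_1}$ and $m = n^p$, one gets that the total Fisher/mutual information about $\xi$ falls below the number of bits $2^J \asymp n^{1/(1+2s_1)}$ needed to recover $\xi$ well enough to estimate $f_\xi$ at rate $n^{-s_1/(1+2s_1)}$ — {\em provided} $2^J$ (roughly $n^{1/(1+2s_1)}$) exceeds the information budget, which is where the constraint $s_1 > 1/(4p)-1/2$, equivalently $(2+4s_1)p > 1$, enters: it guarantees $n^{1/(1+2s_1)+\eps_1} \cdot n^{p} \cdot (\text{SNR factor}) = o(2^J)$ for $\eps_1$ small. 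The adaptivity hypothesis \eqref{eq:adapt:1} is used precisely to force $\hat B^{(i)} \lesssim n^{1/(1+2s_1)+\eps_1}$ {\em even on the $s_1$-signals} — because the procedure, not knowing the true smoothness, cannot transmit more without violating the budget guarantee on the (higher-smoothness, hence smaller-budget) class $\{s_1,s_2\}$... actually one uses that the {\em same} $\hat B^{(i)}$ works for both, and the binding constraint is the $s_1$ one here; the $s_2$-hypothesis enters when we note the procedure must be checked on $f_0=0$ which lies in {\em both} classes, giving the contradiction with \eqref{eq:adapt:2} at level $s_1$.

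**The hard part will be** making the mutual-information bookkeeping tight enough to survive the polynomial slack $\eps_2$ in \eqref{eq:adapt:2}: one must show not merely that exact recovery of $\xi$ fails but that {\em any} estimator has squared $L_2$-risk bounded below by $n^{-2s_1/(1+2s_1)+\eps_2}$, which requires a Fano-type or Assouad-type argument with a careful choice of the perturbation amplitude and the resolution $J$ slightly off the critical scale (e.g.\ $2^J \asymp n^{1/(1+2s_1)+\eps_2'}$) so that the target rate to beat is polynomially separated from what the information budget permits. A secondary technical point is correctly accounting for the {\em average} (rather than worst-case) bit constraint $E_{f_0}\hat B^{(i)} \le B$ under the mixture prior — a Markov-inequality argument restricting to a high-probability event where $\hat B^{(i)} \le 2B$ handles this, at the cost of tracking the small exceptional probability through the risk bound. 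Once these pieces are assembled, choosing $\eps_1,\eps_2>0$ small enough (depending on how far $s_1$ sits strictly above $1/(4p)-1/2$) closes the contradiction.
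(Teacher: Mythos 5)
There is a genuine gap, and it sits at the heart of your argument. Your quantitative mechanism is that the bit budget $B\asymp n^{1/(1+2s_1)+\eps_1}$ permitted on the $s_1$-class is already too small to recover the hard $s_1$-mixture: you claim $m\cdot B\cdot(\text{local per-coordinate SNR}) = n^{p}\cdot n^{1/(1+2s_1)+\eps_1}\cdot n^{-p} = o(2^J)$ with $2^J\asymp n^{1/(1+2s_1)}$. This is false --- the product equals $n^{1/(1+2s_1)+\eps_1}\gg 2^J$ --- and it must be false, because Theorem \ref{theorem: minimaxL2UB} shows that order $n^{1/(1+2s_1)}/\log n$ bits per machine already suffice to attain the rate $n^{-2s_1/(1+2s_1)}$ over $B_{2,\infty}^{s_1}(L)$. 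No direct information-budget argument run at the $s_1$-budget can produce the contradiction, and the condition $s_1>1/(4p)-1/2$ does not enter where you place it.

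The missing idea is a local \emph{testing} step that transfers the small $s_2$-budget to the hard $s_1$-signals. The paper's proof chooses the perturbations at level $j_n$ so that their total energy $2^{j_n}\tilde\delta_n=\tilde\delta_n^{2s_1/(1+2s_1)}$ is simultaneously (a) polynomially above the target rate $n^{-2s_1/(1+2s_1)}$ and (b) below the local $L_2$-testing separation rate $(n/m)^{-2s_1/(1/2+2s_1)}$ from the null $f_0=0\in B_{2,\infty}^{s_2}(L)$; the condition $s_1>1/(4p)-1/2$ is precisely what makes (a) and (b) compatible. A chi-square (second-moment) bound on the averaged likelihood ratio then shows that no local test --- in particular not the test $1_{\hat B^{(i)}\geq\Gamma_n}$ with $\Gamma_n$ the geometric mean of the two budgets --- distinguishes the mixture from $0$; combined with \eqref{eq:adapt:1} at $l=2$ via Markov's inequality, this forces the \emph{average} number of bits transmitted under the hard perturbations to be polynomially below $n^{1/(1+2s_1)}$. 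Only then does the communication-constrained minimax lower bound (Theorem \ref{thm: minimaxL2LBgeneral}), applied with this reduced budget, contradict \eqref{eq:adapt:2}. Your observation that ``each individual coefficient is essentially undetectable'' is not the relevant statement --- what must be controlled is the chi-square distance of the whole mixture from $P_0$, i.e.\ the aggregate energy against the local testing rate --- and your sentence about checking the procedure at $f_0=0$ points at the right place but never supplies the propagation from the budget constraint at $0$ to the budget actually used at the nonzero perturbations.
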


\begin{proof}
See Section \ref{sec: adaptL2negative}.
\end{proof}

% I feel this one only adds a bit confusion:

%\begin{remark}
%The technical condition $B^{(i)}\leq n^{1/(1+2s_1)+\eps_1}\log n$ in Theorem \ref{thm: adaptL2negative} is mild since from assumption \eqref{eq:adapt:1} it follows with probability tending to one. We also show in the next theorem considering adaptation for less regular function that such an assumption does not introduce any additional restrictions and follows naturally.
%\end{remark}

The above theorem tells us that considering even just two regularity classes (with regularities above some threshold level) there doesn't exist any distributed method, which transmits  the optimal amount of bits multiplied by some (small) polynomial factor and reaches the minimax rate in both smoothness classes up to a (small) polynomial factor. The above negative results deliver a strong message, as the question of non-existence can not be resolved by allowing extra logarithmic factors, but is on the polynomial level.

The phenomenon behind the negative result is that in case of many local machines (large  $m$) it is getting more difficult to test locally between the regularity classes (as the local ``sample size'' decreases in $m$) and also the ``local regularity'' of the function which one can judge at noise level $m/n$ might be completely different than the ``global regularity'' of the truth which can be judged at a smaller noise level $1/n$.

Although full adaptation is not possible, it turns out that on a limited range of regularity levels it is possible to construct adaptive methods. Below we derive the complement of the preceding result and show that for regularities
below the threshold  $1/(4p)-1/2$ we {\em can} adapt 
 to smoothness and transmit the minimal  number of  bits at the same time.

\begin{theorem}\label{thm: adaptL2positive}
For arbitrary $0< s_1<s_2\leq 1/(4p)-1/2$ and $m
\geq 5\log n$ there exists a distributed estimator $\hat{f}$ with number of transmitted bits $(\hat{B}^{(1)},...,\hat{B}^{(m)})$, such that $\hat{B}^{(i)}\leq n^{\frac{1}{1+2s_1}}\log n$, $i=1,...,m$, and for all $s\in\{s_1,s_2\}$
\begin{align*}
&\max_{i\in\{1,...,m\}}\sup_{f_0\in B_{2,\infty}^{s}(L)}E_{f_0}^{(i)}\hat{B}^{(i)}\lesssim C_2 n^{\frac{1}{1+2s}}\log n\quad\text{and}\\
&\sup_{f_0\in B_{2,\infty}^{s}(L)}E_{f_0}\|\hat{f}-f_0\|_2^2\lesssim  n^{-\frac{2s}{1+2s}}.\\
\end{align*}
\end{theorem}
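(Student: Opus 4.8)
The plan is to let each local machine run a test that decides whether the unknown signal is merely $s_1$-smooth (``rough'') or already $s_2$-smooth (``smooth''), exploiting that for the $L_2$-risk the smoothness of a signal can be tested at a faster rate than the signal itself can be estimated, and to let the central machine fix a global truncation level by majority vote of these decisions.

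Concretely, fix levels $J_l$ with $2^{J_l}\asymp n^{1/(1+2s_l)}$ for $l=1,2$ (so $J_2<J_1$), chosen a fixed amount below $\tfrac1{1+2s_l}\log_2 n$ so that sending $2^{J_1}$ coefficients, each quantised to precision $\asymp n^{-1/2}$ (hence $\le\log n$ bits after truncation to a bounded range) plus a one-bit flag, costs at most $n^{1/(1+2s_1)}\log n$ bits. In machine $i$, split $X^{(i)}$ into two conditionally independent copies at noise level $\asymp\sqrt{m/n}$ by adding an independently generated local Wiener process; on the first copy form the de-biased quadratic statistic $\hat S^{(i)}=\sum_{J_2<\mathrm{level}(j)\le J_1}\big((\hat\theta^{(i)}_j)^2-\tfrac mn\big)$ and put $Z^{(i)}=0$ if $\hat S^{(i)}>\tau$ and $Z^{(i)}=1$ otherwise, with threshold $\tau\asymp n^{-2s_1/(1+2s_1)}$; on the second copy compute quantised coefficients $\tilde\theta^{(i)}_j$ for $\mathrm{level}(j)\le J_2$, and additionally for $J_2<\mathrm{level}(j)\le J_1$ when $Z^{(i)}=0$. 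Machine $i$ transmits $Z^{(i)}$ and these coefficients. The central machine sets $\hat J=J_2$ if $\sum_iZ^{(i)}\ge m/2$ and $\hat J=J_1$ otherwise, and outputs $\hat f=\sum_{\mathrm{level}(j)\le\hat J}\bar\theta_j\psi_j$ with $\bar\theta_j$ the average of $\tilde\theta^{(\cdot)}_j$ over the machines that reported coefficient $j$.

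The core of the analysis is a Bernstein-type deviation bound for the quadratic form $\hat S^{(i)}$ in independent Gaussians. If $f_0\in B^{s_2}_{2,\infty}(L)$ then $\sum_{\mathrm{level}>J_2}\theta_j^2\lesssim 2^{-2s_2J_2}\asymp n^{-2s_2/(1+2s_2)}$, far below $\tau$, and the bound yields $P_{f_0}(Z^{(i)}=0)\le e^{-n^\delta}$ for some $\delta>0$; conversely, for $f_0\in B^{s_1}_{2,\infty}(L)$ with tail energy $\sum_{\mathrm{level}>J_2}\theta_j^2$ above $\tau$ it yields $P_{f_0}(Z^{(i)}=1)\le e^{-n^\delta}$. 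In both estimates the hypothesis $s_2\le\tfrac1{4p}-\tfrac12$, i.e.\ $m^{2+4s_2}\le n$, is exactly what makes the local noise $\sqrt{m/n}$ small enough — relative to the $\asymp2^{J_1}$ probed coordinates and the separation $\tau$, and using that the Besov constraint caps how much tail energy can sit at any one scale — for these probabilities to be stretched-exponentially small. Hence a Chernoff/union bound over the $m\ge5\log n$ machines forces $\hat J$ to take the ``correct'' value ($J_2$ under $B^{s_2}_{2,\infty}(L)$, and $J_1$ whenever $B^{s_1}_{2,\infty}(L)$ has tail energy above $\tau$) with probability $1-o(n^{-K})$ for any $K$. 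On that event a standard bias--variance split gives risk $\lesssim n^{-2s_l/(1+2s_l)}$ for the relevant $l$: the truncation tail is at the $s_l$-rate, either by Besov membership when $\hat J=J_1$, or because the ``large tail energy'' alternative was ruled out when $\hat J=J_2$, while the variance is $\lesssim 2^{\hat J}/n$ since every transmitted coefficient is averaged over $\ge m/2$ machines, with quantisation error negligible. Off that event the loss is at most a polynomial in $n$ times $o(n^{-K})$. The bit bound holds deterministically for $B^{s_1}_{2,\infty}(L)$, and for $B^{s_2}_{2,\infty}(L)$ the expected length is $\le 2^{J_2}\log n+e^{-n^\delta}2^{J_1}\log n\lesssim n^{1/(1+2s_2)}\log n$. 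The only remaining case, $f_0\in B^{s_1}_{2,\infty}(L)$ with tail energy below $\tau\asymp n^{-2s_1/(1+2s_1)}$, is harmless: truncating at either $J_1$ or $J_2$ then already achieves the $s_1$-rate, whatever $\hat J$ turns out to be.

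The main obstacle is this calibration of $J_1,J_2$ and $\tau$: the local test must almost never declare ``rough'' under $B^{s_2}_{2,\infty}(L)$ — otherwise the $s_2$-bit budget is exceeded — while still declaring ``rough'' with overwhelming probability for every $B^{s_1}_{2,\infty}(L)$ signal whose $J_2$-tail already exceeds the $s_1$-rate, and reconciling these two requirements is exactly what consumes the hypothesis $s_2\le\tfrac1{4p}-\tfrac12$. The delicate technical step is the sharp tail analysis of $\hat S^{(i)}$ under the Besov constraint; sample splitting (to decouple test from estimate), quantisation, the Chernoff bound over machines, and the bias--variance bookkeeping are routine.
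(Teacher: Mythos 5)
Your proposal is correct and follows the same overall architecture as the paper's proof: split each local sample into two independent copies at noise level $\asymp\sqrt{m/n}$, run a quadratic-statistic smoothness test on the first copy, aggregate the binary decisions by majority/median at the center, average quantised coefficients from the second copy, and analyse the risk in three cases (smooth, clearly rough, intermediate). The genuine difference is in the local test. The paper imports the multiscale minimax test of Carpentier (re-proved with sharpened error bounds in its Lemmas 5.3--5.4), which separates $B^{s_2}_{2,\infty}(L)$ from the set of $f\in B^{s_1}_{2,\infty}(L)$ at $L_2$-distance $\gtrsim (n/m)^{-s_1/(1/2+2s_1)}$ from $B^{s_2}_{2,\infty}(L)$ — the local \emph{testing} rate — and then needs an extra approximation step (the $\tilde f$ argument around \eqref{eq: help001}) to convert ``close to $B^{s_2}_{2,\infty}(L)$'' into ``small truncation bias at the global rate'', which is where $s_1<1/(4p)-1/2$ enters. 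You instead use a single de-biased chi-square statistic over the band $(J_2,J_1]$ thresholded at the global $s_1$-estimation rate $\tau\asymp n^{-2s_1/(1+2s_1)}$, i.e.\ you test directly the quantity that controls the bias; the condition $s<1/(4p)-1/2$ enters in exactly the same role, as $\sqrt{2^{J_1}}\,(m/n)\ll\tau$ is equivalent to $p<1/(2+4s_1)$, and it makes your intermediate case trivial by construction. Your route is more elementary and self-contained; the paper's buys the sharper composite-alternative separation and sets up machinery that is reused verbatim in Corollary \ref{thm: adaptL2positive2} for adaptation over a grid of regularities. Two cosmetic remarks: the binding constraint in your calibration is on $s_1$ (implied by the stated condition on $s_2$), not on $s_2$ itself; and the stretched-exponential per-machine error is more than needed — any $o(1)$ error combined with Hoeffding over $m\ge 5\log n$ machines suffices, which is all the paper uses.
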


\begin{proof}
See Section \ref{sec: adaptL2positive}.
\end{proof}

The proposed procedure has two stages. First we ``estimate'' the smoothness of the underlying functional parameter of interest in every local machine parallel to each other  and based on that transmit the right amount of information to the central machine. In the second stage we aggregate the locally transmitted information and provide a ``global'' adaptive estimator. The difficulty, as also discussed above, arises from the higher noise level in the local problems which results in less accurate tests between the smoothness classes. The existence of an estimator which can achieve adaptation (in a limited range of smoothness classes) is a consequence of 
the difference between the nonparametric testing and estimation rates in the case of the $L_2$-norm, see for instance \cite{ingster2,gine:nickl:2016}.  Since one can test between smoothness classes with a faster rate than the corresponding estimation rate, it can compensate (up to some extent) for the higher local noise level $m/n$.

The preceding result can be extended to a scale of smoothness classes as well.

\begin{corollary}\label{thm: adaptL2positive2}
Assume that $m=n^{p}$ for some $0<p\leq 1/2$, then for arbitrary $0<s_1<s_2<1/(4p)-1/2$ and $m\geq 5\log n$ there exists a distributed estimator $\hat{f}$ transmitting $\hat{B}^{(i)}$ bits in the local machines $i=1,...,m$ satisfying  that $\hat{B}^{(i)}\leq n^{\frac{1}{1+2s_1}}\log n$ and
\begin{align*}
&\max_{i=1,...,m}\sup_{s\in [s_1, s_2]}\sup_{f_0\in B_{2,\infty}^{s}(L)} \frac{E_{f_0}^{(i)}\hat{B}^{(i)}}{n^{1/(1+2s)}\log n }\lesssim 1\quad\text{and}\\
&\sup_{s\in [s_1, s_2]}\sup_{f_0\in B_{2,\infty}^{s}(L)}  \frac{E_{f_0}\|\hat{f}-f_0\|_2^2}{n^{-2s/(1+2s)}}\lesssim 1.
\end{align*}
\end{corollary}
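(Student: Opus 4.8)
The plan is to discretise the regularity interval on a logarithmically fine grid and then run the two–point construction of Theorem~\ref{thm: adaptL2positive} simultaneously over all grid cells. I would fix $\delta_n=(s_2-s_1)/K_n$ with $K_n=\lceil(s_2-s_1)\log n\rceil$, so that $\delta_n\asymp 1/\log n$, and set $t_j=s_1+j\delta_n$, $j=0,\dots,K_n$. Two elementary bookkeeping facts make this grid fine enough. First, if a signal of true regularity $s\in[t_j,t_{j+1})$ is treated by the non-adaptive distributed estimator of Theorem~\ref{theorem: minimaxL2UB} as if it had regularity $t_j$ — that is, every local machine transmits a quantised wavelet projection at resolution $2^{J}$ with $2^{J(1+2t_j)}\asymp n$, costing $\asymp n^{1/(1+2t_j)}\log n$ bits — then the squared bias $\asymp 2^{-2Js}=n^{-2s/(1+2t_j)}$ and the aggregated variance $\asymp 2^{J}/n=n^{-2t_j/(1+2t_j)}$ (the quantisation error being of lower order, as in Theorem~\ref{theorem: minimaxL2UB}) each differ from the target $n^{-2s/(1+2s)}$ only by a factor $n^{O(\delta_n)}=O(1)$; the same remains true if the assumed regularity overshoots $s$ by a bounded number of grid steps. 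Second, since $t_j\le s\le s_2$, the bit count $n^{1/(1+2t_j)}\log n$ respects the deterministic cap $n^{1/(1+2s_1)}\log n$ and exceeds $n^{1/(1+2s)}\log n$ again only by a factor $n^{O(\delta_n)}=O(1)$.

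It therefore suffices to produce, in each local machine $i$ and without knowledge of $s$, a grid-valued smoothness estimate $\hat s^{(i)}\in\{t_0,\dots,t_{K_n}\}$ satisfying
\[
 P^{(i)}_{f_0}\big( \hat s^{(i)}\notin [\, s-C\delta_n,\; s+C\delta_n\,] \big)\lesssim n^{-c}
\]
uniformly over $f_0\in B^{s}_{2,\infty}(L)$, $s\in[s_1,s_2]$, for a fixed (large) $c$ and some $C$, and then to feed $\hat s^{(i)}$ into the transmission-and-aggregation mechanism already used in the proof of Theorem~\ref{thm: adaptL2positive}. I would build $\hat s^{(i)}$ exactly as there: for every level $t_j$ run the $L_2$-test of \cite{carpentier:2015}, calibrated to the local noise level $\sqrt{m/n}$ (i.e.\ to effective sample size $n/m=n^{1-p}$), of the hypothesis ``$f_0$ is $t_j$-smooth'' against the alternative that $f_0$ is detectably rougher, and let $\hat s^{(i)}$ be the largest $t_j$ that is not rejected. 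Because $p\le1/2$ and $s_2<1/(4p)-1/2$, for every $s\le s_2$ the $L_2$ testing radius at sample size $n^{1-p}$, of order $n^{-(1-p)\cdot 2s/(1+4s)}$, is polynomially \emph{smaller} than the estimation scale $n^{-s/(1+2s)}$ at which the regularity must be resolved; this strict gap — the same difference between $L_2$-testing and $L_2$-estimation rates exploited in Theorem~\ref{thm: adaptL2positive} — is what lets the local machines localise $s$ to within $\delta_n$ with only polynomially small error despite the inflated local noise, and a union bound over the $K_n\asymp\log n$ levels keeps the error polynomially small.

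Given the $\hat s^{(i)}$, the central machine proceeds as in Theorem~\ref{thm: adaptL2positive}: it averages the transmitted quantised projections, and since $m\ge 5\log n$ a Chernoff bound shows that with probability $1-n^{-c}$ all but a vanishing fraction of the machines report at a resolution within a bounded number of grid steps of $2^{J(s)}$, so the aggregate has squared bias and variance of order $n^{-2s/(1+2s)}$ by the first paragraph. On the remaining $O(n^{-c})$-probability exceptional event the transmitted bit count is still bounded by the deterministic cap $n^{1/(1+2s_1)}\log n$, which is only a fixed polynomial factor above the target, so a large enough $c$ makes its contribution to $E^{(i)}_{f_0}\hat B^{(i)}$ negligible; a crude bound ($L^4$-moment of the estimator times the square root of the exceptional probability) makes its contribution to $E_{f_0}\|\hat f-f_0\|_2^2$ negligible as well. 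Collecting the bias, variance and exceptional-event terms gives both displayed bounds, uniformly over $s\in[s_1,s_2]$ since neither the grid nor the procedure depends on $s$.

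The hard part is the middle step: constructing the grid-valued local estimators $\hat s^{(i)}$ with a \emph{one-sided}, polynomially small error that is uniform over the whole interval $[s_1,s_2]$, even though consecutive grid levels are only $\delta_n\asymp 1/\log n$ apart and the local noise level $\sqrt{m/n}$ is large — one must in particular rule out that a signal which is globally $s$-smooth but ``looks smoother'' at the coarse local noise level $m/n$ gets misclassified. This is exactly the estimate that underlies Theorem~\ref{thm: adaptL2positive}, so for the present corollary I would not redo it but invoke it: once the quantitative behaviour of the Carpentier test under the local noise level is available for every pair of regularities in the admissible range — which is why the strict inequality $s_2<1/(4p)-1/2$ is needed — the passage from two fixed regularities to the entire scale $[s_1,s_2]$ is the routine discretisation bookkeeping outlined above.
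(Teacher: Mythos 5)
Your overall architecture (a $1/\log n$-grid on $[s_1,s_2]$, local Carpentier-type tests at every grid level, ``largest non-rejected level'' as the local smoothness estimate, median/averaging aggregation, and the observation that a one-grid-step mismatch costs only a factor $n^{O(1/\log n)}=O(1)$) matches the paper's proof. However, the step you isolate as the crux is stated in a form that is false, and the way you propose to discharge it misreads what Theorem~\ref{thm: adaptL2positive} actually provides. You require a \emph{two-sided} concentration $P^{(i)}_{f_0}(\hat s^{(i)}\notin[s-C\delta_n,\,s+C\delta_n])\lesssim n^{-c}$ uniformly over $f_0\in B^s_{2,\infty}(L)$. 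No test can deliver the upper half of this: a function can have exact Besov regularity $s$ while lying within the local testing radius $\tilde C (n/m)^{-t/(1/2+2t)}$ of a much smoother class $B^{t}_{2,\infty}(L)$, $t>s+C\delta_n$, in which case every local machine will (correctly, from the test's point of view) retain the smoother hypothesis and output $\hat s^{(i)}=t$. The testing/estimation gap you invoke guarantees detection only for the \emph{separated} alternatives $R^{s,t}$; it says nothing about functions in the intermediate set $B^{s}_{2,\infty}(L)\setminus R^{s,t}$, and these are exactly the functions for which your claimed localisation of $s$ fails. Consequently ``regularity is localised to within $\delta_n$ with polynomially small error'' is not a consequence of Theorem~\ref{thm: adaptL2positive} — that theorem's proof does not establish it either, so it cannot simply be ``invoked.''

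What is actually needed, and what the paper does, is a one-sided control plus an approximation argument: only \emph{undersmoothing} ($\hat s^{(i)}<\underline s$) is shown to be improbable (via a union bound over the $O((\log n)^2)$ pairwise tests), while \emph{oversmoothing} is permitted and rendered harmless by a bias estimate. Concretely, if $\hat s=t>\underline s$ occurs with non-negligible probability then $f_0$ must lie in $B^{\underline s}_{2,\infty}(L)\setminus R^{\underline s,t}$, hence within $L_2$-distance $\tilde C_{M_{n,\underline s}^{-1}}(n/m)^{-\underline s/(1/2+2\underline s)}$ of some $\tilde f\in B^{t}_{2,\infty}(L)$; the condition $s_2<1/(4p)-1/2$ is used precisely to show this distance is $\lesssim n^{-\underline s/(1+2\underline s)}$, so truncating at the (too high) resolution $j_{n,t}$ still leaves a tail $\sum_{j\ge j_{n,t}}\sum_k f_{0,jk}^2\lesssim n^{-2\underline s/(1+2\underline s)}$. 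This case analysis over $\hat s<\underline s$, $\hat s=\underline s$, and each $\hat s=t>\underline s$ is the substantive content of the corollary's proof and is missing from your argument; without it, your ``exceptional event'' accounting does not cover the (non-exceptional, probability-$\Theta(1)$) event that the machines systematically oversmooth on near-smooth signals.
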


\begin{proof}
See Section \ref{sec: adaptL2positive2}.
\end{proof}

 The idea of the proof of this corollary is to introduce a grid of regularities in the interval $[s_1,s_2]$ and test 
 between which two grid points the true regularity lies. Then one can apply the distributed method introduced in the proof of Theorem \ref{thm: adaptL2positive} to derive the stated results.

\subsubsection{Adaptation in $L_\infty$}

Next we deal with the $L_{\infty}$-norm case. Here we show that in contrast to the $L_{2}$-case, adaptation is not possible even on a limited range of smoothness classes. The reason behind it is that in this case the minimax testing and estimation rates are the same and hence there is no room left to compensate for the higher local noise level.

\begin{theorem}\label{thm: adaptLinftynegative}
Take any $0<s_1<s_2$ and
assume that $m=n^{p}$, with $p\in(0,1/2)$. Then  there does not exist a distributed estimator $\hat{f}$ with transmitted bits $\hat{B}^{(i)}\leq n^{\frac{1}{1+2s_1}+\eps_1}$, $i=1,...,m$, satisfying that for $\ell=1,2$
\begin{align}
&\max_{i=1,...,m}\sup_{f_0\in B_{\infty,\infty}^{s_\ell}(L)}E_{f_0}^{(i)}\hat{B}^{(i)}\lesssim  n^{\frac{1}{1+2s_\ell}+\eps_1},\quad\text{and}\label{eq:UB:hatB}\\
&\sup_{f_0\in B_{\infty,\infty}^{s_\ell}(L)}E_{f_0} \|\hat{f}-f_0\|_\infty \lesssim (n/\log n)^{-\frac{s_\ell}{1+2s_\ell}+\eps_2},\label{eq:UB:risk}
\end{align}
for some sufficiently small $\eps_1,\eps_2>0$.

\end{theorem}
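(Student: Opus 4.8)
The plan is to argue by contradiction and reduce the assertion to a communication-constrained testing lower bound, along the same lines as the proof of Theorem~\ref{thm: adaptL2negative}. The decisive difference — and the reason the threshold $s_1>1/(4p)-1/2$ present in the $L_2$ statement disappears — is that in the sup-norm the minimax \emph{testing} and \emph{estimation} rates coincide (both of order $(\sigma^{2}\log(1/\sigma))^{s/(1+2s)}$ at noise level $\sigma$). Hence there is no slack between them to exploit, and, correspondingly, the band of resolution levels used in the construction below will be non-empty for \emph{every} $p\in(0,1/2)$.

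\emph{The construction.} Fix a level $J=J_n$ with $2^{J}=n^{\alpha}$, where $\alpha$ lies in the interval $\bigl((1-p)/(1+2s_1),\,1/(1+2s_1)\bigr)$ — non-empty because $p>0$ — its exact value and the constants $\eps_1,\eps_2>0$ being fixed only at the end. Let $\{\psi_{Jk}\}_{k}$ be the level-$J$ wavelets of Appendix~\ref{sec: wavelets}, normalized by $\|\psi_{Jk}\|_2=1$, with (essentially) disjoint supports and $\|\sum_k c_k\psi_{Jk}\|_\infty\asymp 2^{J/2}\max_k|c_k|$. Put $\rho=c_0\,2^{-J(s_1+1/2)}$ with $c_0$ a small enough absolute constant (depending only on $L$ and the basis) so that every single bump $f_k:=\rho\psi_{Jk}$ belongs to $B_{\infty,\infty}^{s_1}(L)$, and set $\delta_n:=\min_k\|f_k\|_\infty\asymp \rho\,2^{J/2}\asymp n^{-\alpha s_1}$. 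Because $\alpha<1/(1+2s_1)$ with room to spare, $\delta_n$ exceeds, by a polynomial factor, both target rates $(n/\log n)^{-s_\ell/(1+2s_\ell)+\eps_2}$, $\ell=1,2$ (since $s\mapsto s/(1+2s)$ is increasing, the $\ell=1$ bound is the binding one, forcing $\alpha<1/(1+2s_1)-\eps_2/s_1$). The null hypothesis is $H_0:\ f_0=0\in B_{\infty,\infty}^{s_2}(L)$, and the alternative is $H_1=\{f_k:\ 0\le k<2^{J}\}\subset B_{\infty,\infty}^{s_1}(L)$.

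\emph{Reduction to testing.} Suppose, towards a contradiction, that some $\hat f$ with messages $\hat B^{(i)}\le n^{1/(1+2s_1)+\eps_1}$ satisfied \eqref{eq:UB:hatB}--\eqref{eq:UB:risk}. Then $\phi:=\mathbf{1}\{\|\hat f\|_\infty>\delta_n/2\}$ is a consistent test of $H_0$ against $H_1$: by Markov's inequality and \eqref{eq:UB:risk}, $E_0\phi\lesssim (n/\log n)^{-s_2/(1+2s_2)+\eps_2}/\delta_n\to0$, while for each $k$, $E_{f_k}(1-\phi)\le P_{f_k}(\|\hat f-f_k\|_\infty\ge\delta_n/2)\lesssim (n/\log n)^{-s_1/(1+2s_1)+\eps_2}/\delta_n\to0$, by the choice of $\alpha,\eps_2$. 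Moreover the messages $Y^{(1)},\dots,Y^{(m)}$ that $\hat f$ (hence $\phi$) is built from satisfy $E_0^{(i)}\hat B^{(i)}\le n^{1/(1+2s_2)+\eps_1}=:b_0$ by \eqref{eq:UB:hatB} with $\ell=2$, and $\hat B^{(i)}=l(Y^{(i)})\le n^{1/(1+2s_1)+\eps_1}$ surely. Thus it suffices to show that no test based on independent messages subject to these two bounds can separate $H_0$ from the uniform mixture $\bar\Pi_1:=2^{-J}\sum_k\mathcal{L}_{f_k}$; since $\phi$ would be such a test, this produces the contradiction.

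\emph{The testing lower bound, and the main obstacle.} One controls the $\chi^{2}$-divergence between the message laws, $\chi^{2}\bigl(\mathcal{L}_{\bar\Pi_1}(Y^{1:m})\,\big\|\,\mathcal{L}_{H_0}(Y^{1:m})\bigr)$, exactly as in the proof of Theorem~\ref{thm: adaptL2negative}. Expanding the square and using independence across servers reduces the problem to bounding, for each pair $(k,k')$, the product over $i$ of the $\chi^{2}$-affinities $\int d\mathcal{L}_{f_k}(Y^{(i)})\,d\mathcal{L}_{f_{k'}}(Y^{(i)})\big/ d\mathcal{L}_{H_0}(Y^{(i)})$. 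The diagonal terms ($k=k'$) are crudely at most $(e^{n\rho^{2}/m})^{m}=e^{n\rho^{2}}$ by the data-processing inequality, which on its own is useless; it is the off-diagonal terms (where $f_k,f_{k'}$ have disjoint supports) that must be shown to be essentially $1$. This is where the bound on the message length is used decisively: a message of bounded expected length can react to the mean shift at only a controlled fraction of the $2^{J}$ locations, which prevents the per-server affinities — each $1+o(1)$, since $n\rho^{2}/m=o(1)$ because $\alpha>(1-p)/(1+2s_1)$ — from being amplified across the $m$ servers (in sharp contrast with the full data, for which $\chi^{2}\bigl(\mathcal{L}_{\bar\Pi_1}(X^{1:m})\,\big\|\,\mathcal{L}_{H_0}(X^{1:m})\bigr)=2^{-J}(e^{n\rho^{2}}-1)$ diverges). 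After this analysis one obtains a bound essentially of the order $2^{-J}\,(n\rho^{2}/m)\sum_{i=1}^{m}E_0^{(i)}\hat B^{(i)}\lesssim 2^{-J}\,n\rho^{2}\,b_0 = n^{\,1+1/(1+2s_2)+\eps_1-\alpha(2+2s_1)}$, whose exponent is negative for an admissible $\alpha<1/(1+2s_1)$ and a small $\eps_1$ \emph{precisely because} $s_1<s_2$; hence the $\chi^{2}$-divergence — and therefore the total-variation distance between the two message laws — tends to $0$, so no consistent test exists, completing the contradiction. The hard part is making the off-diagonal estimate rigorous while handling the fact that $\hat B^{(i)}=l(Y^{(i)})$ is itself data-dependent, so that a bound on $E_0^{(i)}\hat B^{(i)}$ does not immediately control the exponential-type quantities that appear; this calls for a truncation of the message length together with a separate estimate on the exceptional event, and it is here (as in Theorem~\ref{thm: adaptL2negative}) that the assumption $m=n^{p}$ with $p<1/2$ is used. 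The remaining steps — the wavelet and Besov bookkeeping and the final choice of $\alpha,\eps_1,\eps_2$ — are routine.
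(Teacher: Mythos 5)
Your overall architecture differs from the paper's in a way that leaves the decisive step unproven. The paper never establishes a communication-constrained \emph{testing} lower bound for the message laws. Instead it uses the single-bump family only through a classical, single-server Ingster-type bound: since $\|f_k\|_\infty\ll(n/m)^{-s_1/(1+2s_1)}$, no test based on the \emph{full local data} $X^{(i)}$ can distinguish $0$ from the uniform mixture over $\mathcal{F}_0$. This is then applied to the specific test $\Psi^{(i)}=\mathbf{1}\{\hat B^{(i)}\ge\Gamma_n\}$ to conclude that the machines transmit, on average over $\mathcal{F}_0$, only $O(\beta_n)$ bits with $\beta_n$ polynomially smaller than $n^{1/(1+2s_1)}$; feeding this reduced budget into the already-proven distributed \emph{estimation} lower bound (Fano plus the mutual-information bound of Lemma \ref{lem: lem5_redo}, i.e.\ assertion \eqref{eq: LB:average:bits:infty}) yields a risk lower bound of order $\tilde\delta_n^{s_1/(1+2s_1)}\gg n^{-s_1/(1+2s_1)+\eps_2}$, contradicting \eqref{eq:UB:risk}. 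No chi-square analysis of the message distributions is ever needed.

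Your route instead reduces \eqref{eq:UB:risk} to the existence of a consistent test $\phi=\mathbf{1}\{\|\hat f\|_\infty>\delta_n/2\}$ built from the messages, and then requires showing that no such test exists under the stated expected-length bounds, via control of $\chi^{2}\bigl(\mathcal{L}_{\bar\Pi_1}(Y^{1:m})\,\|\,\mathcal{L}_{H_0}(Y^{1:m})\bigr)$. That is precisely the hard and nonstandard part, and it is asserted rather than proved: the claim that the off-diagonal affinities are ``essentially $1$'' because a short message ``can react to only a controlled fraction of locations,'' and the resulting bound $2^{-J}(n\rho^{2}/m)\sum_i E_0^{(i)}\hat B^{(i)}$, do not follow from anything in the paper. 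Lemma \ref{lem: lem5_redo} bounds the mutual information $I(F;Y)$ for a hypercube prior and feeds into Fano for \emph{estimation}; it gives no handle on the chi-square divergence between the null and the mixture of message laws, which is what a testing lower bound requires, and the data-dependence of $l(Y^{(i)})$ makes the needed exponential-moment control genuinely delicate. As written, your argument therefore has a gap exactly at its crux. The fix is either to actually develop the distributed testing lower bound (substantial new machinery), or to switch to the paper's strategy: use the local-data testing impossibility only to cap the transmitted bit count on $\mathcal{F}_0$, and then invoke the existing estimation lower bound with the reduced budget.
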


\begin{proof}
See Section \ref{sec: adaptLinftynegative}

\end{proof}

Next we introduce some additional restriction on the true function of interest under which adaptation is possible in the distributed setting. To do so we consider the so-called self-similarity assumption, where loosely speaking  we assume that the true function has similar smoothness at every resolution level. This will allow us to estimate  the regularity $s$ of the functional parameter of interest and therefore transmit the right amount of bits  from the local machines to the central one.

We first introduce necessary notation. Let $\psi_{jk}$ be the wavelet basis functions described in Appendix \ref{sec: wavelets}.
For $f \in L_2[0,1]$ and natural numbers $j_1 \le j_2$ we define
\begin{align*}
 f_{[j_1,j_2]}=\sum_{j=j_1}^{j_2}\sum_{k=1}^{2^j}f_{jk}\psi_{jk}.
\end{align*}
Then following \cite{bull:2012} we say that the function $f\in B_{\infty,\infty}^s(L)$ belongs to the self-similar class $ S^s_{\infty}(L,\eps,j_0,\rho)$ if,
\begin{align}
\|f_{[j,\rho j]}\|_{B_{\infty,\infty}^s}\geq \eps L,\quad\text{for $j\geq j_0$ and $\rho>1$.}\label{cond: selfsim:Linf}
\end{align}

The self-similarity property was introduced (amongst other places) in the context of adaptive confidence bands. It was shown that under self-similarity one can construct adaptive $L_{\infty}$ confidence bands whose size also adapts to the level of regularity, see for instance \cite{picard:2000,gine:nickl:2010,bull:2012}. The underlying idea is the same as here. Under this assumption one can provide a consistent estimator for the smoothness and based on that construct the band corresponding the function class.

The following theorem shows that under the self-similarity assumption there exists a distributed method which adapts to 
regularity and at the same time  transmits the minimal amount of bits (again up to logarithmic factors).

\begin{theorem}\label{thm: adaptLinftypositive}
Consider the distributed Gaussian white noise model with $m\leq n^{\delta}$, for some $\delta\in(0,1)$ and assume that $f_0\in B_{\infty,\infty}^{s}(L)$ for some $s\in[s_1,s_2]$ (where $0<s_1<s_2$ are arbitrary). 
Then there exists a distributed method such that the number of transmitted bits satisfies $\hat{B}^{(i)}\leq (n/\log n)^{1/(1+2s_1)}\log n$
and
\begin{align*}
&\max_{i\in\{1,...,m\}}\sup_{s\in[s_1,s_2]}\sup_{f_0\in S^s_{\infty}(L,\eps,j_0,\rho)}\frac{E_{f_0}^{(i)}\hat{B}^{(i)}}{ n^{\frac{1}{1+2s}}(\log n)^{\frac{2s}{1+2s}}}\lesssim 1\quad\text{and}\\
&\sup_{s\in[s_1,s_2]}\sup_{f_0\in S^s_{\infty}(L,\eps,j_0,\rho)}\frac{E_{f_0} \|\hat{f}-f_0\|_{\infty}}{(n/\log n)^{-\frac{s}{1+2s}}}\lesssim 1.
\end{align*}
\end{theorem}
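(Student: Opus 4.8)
The plan is to build a two-stage distributed procedure in which each local machine first produces a data-driven estimate $\hat s^{(i)}$ of the regularity $s$ under the self-similarity assumption, then transmits a thresholded/quantized version of its local wavelet coefficients up to a resolution level dictated by $\hat s^{(i)}$, and finally the central machine averages the transmitted coefficients to form $\hat f$. The key point, which distinguishes this from the $L_\infty$ negative result of Theorem \ref{thm: adaptLinftynegative}, is that the self-similarity condition \eqref{cond: selfsim:Linf} forces the Besov norm of the block $f_{[j,\rho j]}$ to be bounded below, so that even at the inflated local noise level $\sqrt{m/n}$ one can consistently estimate $s$. Concretely, the local noise level is $m/n\le n^{\delta-1}$, so the ``effective sample size'' in each machine is still polynomial in $n$; the self-similarity lower bound means the signal in a dyadic block of level $j$ stands out against the noise as long as $2^{j}\lesssim (n/(m\log n))^{1/(1+2s)}$, which is exactly the resolution we are allowed to reach with the budget $(n/\log n)^{1/(1+2s_1)}\log n$. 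I would set up the smoothness estimator along the lines of \cite{bull:2012} and \cite{gine:nickl:2010}: choose $\hat s^{(i)}$ to be (a discretization of) the largest $s$ in a fine grid over $[s_1,s_2]$ for which an empirical version of $\|f_{[j,\rho j]}\|_{B^s_{\infty,\infty}}$ computed from $X^{(i)}$ still exceeds a suitable multiple of the noise level, using a union bound over the grid and Gaussian tail bounds on the wavelet-coefficient noise to control the error probability by something like $\exp(-c\log n)=n^{-c}$.

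First I would fix notation: let $J_s$ be the ideal truncation level $2^{J_s}\asymp (n/\log n)^{1/(1+2s)}$ (the level at which the non-distributed adaptive $L_\infty$ estimator truncates), and note that the communication budget $(n/\log n)^{1/(1+2s_1)}\log n$ comfortably allows each machine to transmit all coefficients up to level $J_{s_1}\ge J_s$, each quantized to $O(\log n)$ bits, so the bit constraint $\hat B^{(i)}\le (n/\log n)^{1/(1+2s_1)}\log n$ and the in-expectation bound $E_{f_0}^{(i)}\hat B^{(i)}\lesssim n^{1/(1+2s)}(\log n)^{2s/(1+2s)}$ both reduce to showing $P(\hat s^{(i)}<s-\eta)$ is polynomially small for a small $\eta$, since on the good event the machine transmits only up to level $J_{\hat s^{(i)}}$. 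Second, I would prove the estimation bound by the usual bias–variance split on $\|\hat f-f_0\|_\infty$: on the good event $\{\hat s^{(i)}\ge s-\eta \text{ for all }i\}$ the averaged estimator truncated at level $\min_i J_{\hat s^{(i)}}$ behaves like the non-distributed hard-thresholding wavelet estimator at noise level $\sqrt{1/n}$ (the averaging of the $m$ independent channels recovers effective noise $\sqrt{1/n}$ on the transmitted coefficients), giving bias $\lesssim 2^{-J s}\lesssim (n/\log n)^{-s/(1+2s)}$ and stochastic term $\lesssim \sqrt{J_s/n}\lesssim (n/\log n)^{-s/(1+2s)}$; on the complementary (polynomially small) event I would bound $\|\hat f-f_0\|_\infty$ crudely by a polynomial in $n$ (e.g.\ using $f_0\in B^{s_1}_{\infty,\infty}(L)$ and a trivial bound on the transmitted noise) so the contribution to the expectation is negligible. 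I also need the matching event in the \emph{other} direction, $\hat s^{(i)}\le s+\eta$ with high probability, so that the bits transmitted do not exceed the budget for the larger regularity — this follows because above level $J_{s+\eta}$ the true signal is below noise level and the empirical block norm falls below the threshold.

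The main obstacle, I expect, is making the local smoothness estimator work at the \emph{degraded} noise level $\sqrt{m/n}$ rather than $\sqrt{1/n}$: the self-similarity bound \eqref{cond: selfsim:Linf} guarantees the signal-to-noise separation needed to detect $s$ only down to resolution $2^j\lesssim (n/(m\log n))^{1/(1+2s)}$, which is a coarser level than the ideal $J_s$, so I must check that this coarser detection level is still fine enough to pin down $s$ to within the tolerance $\eta$ needed for the estimation bound to go through — this is where the hypothesis $m\le n^\delta$ with $\delta<1$ is essential, since it keeps $(n/(m\log n))^{1/(1+2s)}$ polynomial in $n$ and hence lets $j$ range over $\asymp\log n$ scales, enough for a consistent (indeed rate-$1/\sqrt{\log n}$ accurate) estimate of $s$ uniformly over $[s_1,s_2]$. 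A secondary technical point is the quantization of transmitted coefficients: each transmitted $X^{(i)}_{jk}$ must be rounded to precision $\asymp\sqrt{m/n}$ using $O(\log n)$ bits without inflating the variance, and the averaged quantization error over $m$ machines must remain $o((n/\log n)^{-s/(1+2s)})$ in sup-norm over all $O((n/\log n)^{1/(1+2s_1)})$ retained coefficients, which is handled by a union bound and the choice of $O(\log n)$ bits per coefficient. Once these two points are settled, assembling the in-expectation bit bound and the risk bound is routine, following the pattern of Theorem \ref{theorem: minimaxL2UB} and Proposition \ref{prop: alg:adapt:s0} combined with the adaptive self-similar band construction of \cite{bull:2012}.
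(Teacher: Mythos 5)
Your architecture is essentially the paper's: a two-stage scheme in which each machine estimates $s$ locally from the self-similarity property (the paper simply invokes Lemma 5.2 of \cite{bull:2012} for the block-norm smoothness estimator you sketch), transmits $O(\log n)$-bit quantizations of its coefficients up to level $(n/\log n)^{1/(1+2\hat s^{(i)})}$, and the central machine averages; the sup-norm bias--variance split and the crude bound on the low-probability bad event are also as in the paper. Two implementation details differ harmlessly: the paper aggregates the local truncation levels by their \emph{median} (with a Hoeffding bound) rather than your minimum, and it makes the sample splitting explicit — you should too, since the truncation level must be independent of the transmitted coefficients for the variance computation to go through as written.

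One quantitative point must be tightened before the argument closes. You ask only that $P(\hat s^{(i)}<s-\eta)$ be polynomially small ``for a small $\eta$,'' and elsewhere claim ``rate-$1/\sqrt{\log n}$'' accuracy; neither suffices. Since $n^{1/(1+2(s-\eta))}=n^{1/(1+2s)}\cdot n^{2\eta/((1+2s)(1+2s-2\eta))}$, the claimed bit bound $E_{f_0}^{(i)}\hat B^{(i)}\lesssim n^{1/(1+2s)}(\log n)^{2s/(1+2s)}$ and the constant-factor risk bound both require $\eta=O(1/\log n)$: a constant $\eta$ loses a polynomial factor, and $\eta\asymp 1/\sqrt{\log n}$ loses a factor $2^{c\sqrt{\log n}}$ that no power of $\log n$ absorbs. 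The estimator you describe does in fact achieve downward accuracy $C/\log(n/m)=O(1/\log n)$ together with $\hat s^{(i)}\le s$ (this is exactly the content of Bull's Lemma 5.2, and it is here — keeping $\log(n/m)\gtrsim(1-\delta)\log n$ — that the hypothesis $m\le n^{\delta}$ is used), so the gap is one of stated precision rather than of method; but as literally written your good event is too weak to deliver the theorem's conclusions.
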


\begin{proof}
See Section \ref{sec: adaptLinftypositive}
\end{proof}

%\section{Simulation study}
%Shall we carry out a simulation study? For instance we could compare the performance of the distributed and non-distributed methods for estimating the functional parameter of interest.
%

%
%\section{Discussion}
%
%The above observed phenomena is closely related to the existence of adaptive and honest confidence sets. In the $L_2$-norm case one can construct confidence sets which have optimal minimax size and at the same time contain the true underlying function of interest with the prescribed high probability, uniformly in case the true regularity belongs to a limited range of smoothness classes $s\in [s_0,2s_0]$ for some given $s_0>0$. Outside of this interval it is not possible to construct confidence sets which adapt to the minimax rate and contain the truth with uniformly high probability, see for instance \cite{robins:2006,bull:2013}.  The existence of the limited range of smoothness classes is also due to the different $L_2$-testing and estimation rates. At the same time it is impossible to construct adaptive and honest confidence sets for the $L_{\infty}$-norm even over a limited range of smoothness classes. This is due also to the matching testing and estimation rates, see for instance \cite{cai:low:04,gine:nickl:2010,bull:2012}.
%
%

\section{Proofs for the adaptation results}

In the proofs we will work with the wavelet decomposition of the functional parameter $f_0$. In our analysis we consider the Daubechie wavelets $\psi_{jk}(t)$ for $j=0,1,...$, $k=1,...,2^j$, $t\in[0,1]$ and denote by $f_{0,jk}=\int_0^1\psi_{jk}(t)f_0(t) dt$ the corresponding wavelet coefficients.  In Section \ref{sec: wavelets} we have collected a few properties of Daubechie wavelets which we will apply throughout the proofs.

We note that following from the orthonormality of the Daubechie wavelets we have 
that the Gaussian white noise model can be written in the sequence representation
\begin{align}
X_{jk}^{(i)}=f_{0,jk}+\sqrt{\frac{m}{n}} Z_{jk}^{(i)},\quad j=0,1,2,...;\, k=1,...,2^j;\, i=1,...,m,\label{model: sequence}
\end{align}
where $X_{jk}^{(i)}$, $j=0,1,...,$ $k=1,...,2^j$ are the noisy observations $X_{jk}^{(i)}=\int_0^1 \psi_{jk}(t)dX^{(i)}(t)$ and $Z_{jk}^{(i)}$ are iid standard normal random variables.

\subsection{Proof of Proposition \ref{prop: alg:adapt:s0}}\label{sec: alg:adapt:s0}
Consider  the sequence representation of the distributed Gaussian white noise model, see \eqref{model: sequence} using  at least $s_{\max}$ regular Daubechie wavelets. Then by transmitting $n^{1/(1+2s_0)}\log n$ bits (the first $n^{1/(1+2s_0)}$ elements of the sequence representation of the model up to the first $0.5\log n$ digits in the binary representation of the number, see Algorithm \ref{alg: transmit:number}) to the central machine and averaging the transmitted local data we arrive to the global sequence model
\begin{align*}
Y_{jk}=f_{0,jk}+\sqrt{\frac{1}{n}}Z_{jk}+\eps_{jk},\quad j=0,...,\lfloor\frac{\log n}{1+2s_0}\rfloor,\, k=1,...,2^j,
\end{align*}
where $Z_{jk}$ are iid standard Gaussian random variables and $|\eps_{jk}|\leq n^{-1/2}$ are random variables representing the error term arising from transmitting only the first $0.5\log n$ digits of the observations. These error terms are in fact negligible. Then using arbitrary adaptation technique (for instance Lepski's method \cite{lepski:1991}) one can construct an estimator $\hat{f}$ achieving the minimax risk for every $f\in B_{2,\infty}^s(L), s_0\leq s\leq s_{\max}$. 

\subsection{Proof of Theorem \ref{thm: adaptL2negative}}\label{sec: adaptL2negative}
We argue by contradiction. We assume that the inequalities \eqref{eq:adapt:1} and \eqref{eq:adapt:2} hold. Then we construct a finite but large enough set $\mathcal{F}_0\subset  B_{2,\infty}^{s_1}(L)$ such that there does not exist a  consistent test between the elements of the set and the zero function, which clearly belongs to the smoother class $B_{2,\infty}^{s_2}(L)$. Using this non-existence result we arrive to contradiction with our assumptions. 

As a first step we construct the set $\mathcal{F}_0$. Let us introduce the following notations
\begin{align}
\tilde\delta_n&=\bar\delta_n\wedge (n/m)^{-\frac{1+2s_1}{1/2+2s_1}}n^{-\eps_3},\quad \text{with}\label{def: tilde:delta_n}\\
\bar{\delta}_n&=\min\Big\{\frac{m}{n \log m }, \frac{1}{n [\bar\delta_n^{1/(1+2s_1)} \beta_n \wedge  1]  \log m} \Big\},\nonumber\\
\beta_n&=(\Gamma_n \vee n^{\eps_1-\frac{(s_1+1/4)\eps_3}{1+2s_1}} n^{\frac{1}{1+2s_1}})\log n \quad\text{and}\quad \Gamma_n=  n^{\frac{1/2}{1+2s_1}+\frac{1/2}{1+2s_2}+\eps_1},
\nonumber
\end{align}
and constants $\eps_3\in(0,\frac{p(1+2s_1)-1/2}{1/2+2s_1})$, where $p(1+2s_1)-1/2>0$ follows from the assumption $s_1>1/(4p)-1/2$, and $\eps_1\in\big(0, \frac{s_2-s_1}{(1+2s_1)(1+2s_2)}\wedge\frac{(s_1+1/4)\eps_3}{1+2s_1}\big)$.
Note that $\beta_n\leq n^{\frac{1}{1+2s_1}-\eps_4}\log n$, with $\eps_4=(\frac{s_2-s_1}{(1+2s_1)(1+2s_2)}-\eps_1)\wedge \big(\frac{(s_1+1/4)\eps_3}{1+2s_1}-\eps_1\big)>0$. In view of the definition of $\bar{\delta}_n$ this implies that $\bar\delta_n\geq n^{\eps_4\frac{1+2s_1}{2+2s_1}}/(n\log n)\gg n^{-1+\eps_4/2}$.

Furthermore, $$(n/m)^{-\frac{1+2s_1}{1/2+2s_1}}n^{-\eps_3}= n^{-(1-p)\frac{1+2s_1}{1/2+2s_1}-\eps_3} =
n^{-1} n^{\frac{p(1+2s_1)-1/2}{1/2+2s_1}-\eps_3}.$$
Therefore we can conclude that for large enough $n$
\begin{align}
\tilde{\delta}_n\geq n^{-1+\eps_5}\quad\text{with}\quad\text{$\eps_5= (\eps_4/2) \wedge \Big(\frac{p(1+2s_1)-1/2}{1/2+2s_1}-\eps_3\Big)>0.$}\label{eq: help:LB_delta}
\end{align}

 The elements $f\in\mathcal{F}_0$ are then defined with the wavelet coefficients as
 \begin{align}
 f_{jk}=
 \begin{cases}
\beta_{k}\tilde{\delta}_n^{1/2}, & \text{if}\quad j=j_n:=\lfloor \frac{\log \tilde{\delta}_n^{-1}}{1+2s_1}\rfloor , k=1,...,2^{j_n},\\
 0, & \text{else}, 
 \end{cases}
\end{align}
where $\beta_{k}\in\{-1,1\}$. It is easy to check that $\mathcal{F}_0\subset B_{2,\infty}^{s_1}(1)$ and besides, for every $f\in\mathcal{F}_0$, in view of the definition of $\tilde\delta_n$,
\begin{align*}
\|0-f\|_2^2=\sum_{j=0}^{\infty}\sum_{k=1}^{2^j}f_{jk}^2= 2^{j_n}\tilde{\delta}_n\leq \tilde{\delta}_n^{\frac{2s_1}{1+2s_1}}=o \big((n/m)^{-\frac{2s_1}{1/2+2s_1}}\big).
\end{align*}

Next we take the average likelihood ratio over the class $\mathcal{F}_0$
\begin{align*}
Z=\frac{1}{|\mathcal{F}_0|}\sum_{f\in\mathcal{F}_0}\frac{dP_{f}^{(i)}}{d P_0^{(i)}},\quad \text{where $| \mathcal{F}_0|= 2^{j_n}$}.
\end{align*}
In view of  (6.23) of \cite{gine:nickl:2016}
\begin{align}
\inf_{\Psi^{(i)}} \{E_{0}^{(i)}\Psi^{(i)} +\frac{1}{|\mathcal{F}_0|}\sum_{f\in\mathcal{F}_0}E_f^{(i)} (1-\Psi^{(i)})\}\geq (1-\eta_{n})\Big(1-\frac{\sqrt{E_{0}^{(i)}(Z-1)^2}}{\eta_{n}} \Big),\label{eq: test:nonconsistent}
\end{align}
for every $\eta_{n}\in(0,1)$, where the infimum is taken over all local tests in the local problems. Furthermore one can show by following the steps in the proof of Theorem 6.2.11 c) on pages 493-494 of \cite{gine:nickl:2016} (with $\gamma_{n/m}'=c_0^2 (n/m)\tilde{\delta}_n$ and $\gamma_{n/m}=  (n/m)\tilde{\delta}_{n}^{\frac{1/2+2s_1}{1+2s_1}}\leq n^{-\frac{1/2+2s_1}{1+2s_1}\eps_3}$) that 
\begin{align*}
E_{0}^{(i)}(Z-1)^2\leq  \exp\{c'\gamma_{n/m}^2\}-1\lesssim\gamma_{n/m}^2\lesssim n^{-\frac{1+4s_1}{1+2s_1}\eps_3}.
\end{align*}
By choosing  $\eta_{n}=  n^{-\frac{(1/4+s_1)\eps_3}{1+2s_1}}$  we get that
\begin{align}
\inf_{\Psi^{(i)}} \{E_{0}^{(i)}\Psi^{(i)} +\frac{1}{|\mathcal{F}_0|}\sum_{f\in\mathcal{F}_0}E_f^{(i)} (1-\Psi^{(i)})\}\geq (1-C\eta_{n})^2,\label{eq: test:nonconsistent2}
\end{align}
 for some large enough constant $C>0$, concluding the proof of the non-existence of consistent tests between $\mathcal{F}_0$ and the zero function.

Next we show that \eqref{eq: test:nonconsistent2} contradicts our assumptions. Let us define the test
\begin{align*}
\Psi^{(i)}=1_{\hat{B}^{(i)}\geq \Gamma_n}.
\end{align*}
First note that following from Markov's inequality and assumption \eqref{eq:adapt:1}
\begin{align*}
E_0^{(i)}\Psi^{(i)}=P_0^{(i)}(\hat{B}^{(i)}\geq \Gamma_n)\leq E_0^{(i)}(\hat{B}^{(i)})/\Gamma_n\leq n^{\frac{1/2}{1+2s_2}-\frac{1/2}{1+2s_1}}=o(1).
\end{align*}
Therefore in view of \eqref{eq: test:nonconsistent2} we have that
\begin{align*}
\frac{1}{|\mathcal{F}_0|}\sum_{f\in\mathcal{F}_0} P_f^{(i)}(\hat{B}^{(i)}<\Gamma_n) &=\frac{1}{|\mathcal{F}_0|}\sum_{f\in\mathcal{F}_0} E_f^{(i)}(1-\Psi^{(i)})\\
&\geq (1- C\eta_n)^2- n^{\frac{1/2}{1+2s_2}-\frac{1/2}{1+2s_1}}.
\end{align*} 
As a consequence and in view of assumption $\hat{B}^{(i)}\leq  C n^{\frac{1}{1+2s_1}+\eps_1}\log n$
\begin{align*}
\frac{1}{|\mathcal{F}_0|}\sum_{f\in\mathcal{F}_0} E_f^{(i)} \hat{B}^{(i)}\lesssim  \Gamma_n+ n^{\frac{1}{1+2s_1}+\eps_1}(\log n)( \eta_n+ n^{\frac{1/2}{1+2s_2}-\frac{1/2}{1+2s_1}})\lesssim \beta_n.
\end{align*}
This means that the expected number (with respect to the joint distribution of the variables $F$ and $P_f$, $f\in\mathcal{F}_0$) of transmitted bits on the class $\mathcal{F}_0$ is bounded from above by a multiple of $\beta_n$. So the distributed estimator satisfies assertion \eqref{eq: UB:info:length} in the proof of Theorem \ref{thm: minimaxL2LBgeneral} with $B^{(i)}$ replaced by $C\beta_n$. Hence in view of the minimax lower bound derived in assertion \eqref{eq: LB:average:bits} and the definition of $\tilde{\delta}_n$ (with $B^{(i)}$ replaced by $\beta_n$ in the definition of $\delta_n$ in the proof of Theorem \ref{thm: minimaxL2LBgeneral})
\begin{align*}
\sup_{f_0\in \mathcal{F}_0} E_{f_0}\|\hat{f}-f_0\|_2^2 \gtrsim \tilde{\delta}_n^{\frac{2s_1}{1+2s_1}}\gg n^{-\frac{2s_1}{1+2s_1}+\eps_2},
\end{align*}
with $\eps_2=2\eps_5 s_1/(1+2s_1)$, where the last inequality follows from \eqref{eq: help:LB_delta}. 
This contradicts assumption \eqref{eq:adapt:2}, finishing the proof of our statement.

\subsection{Proof of Theorem \ref{thm: adaptL2positive}}\label{sec: adaptL2positive}
In our proof we work with the equivalent sequence representation of the model \eqref{model: sequence}. As a first step we split the data in all of the local models $i\in\{1,...,m\}$ into two subsets $X_{jk}^{(i,1)},X_{jk}^{(i,2)}$ for $j=0,1,2,..,$ $k=1,...,2^j$,  such that they are pairwise independent and their variance is $2m/n$ (this can be done by adding and substracting $\tilde{Z}_{jk}^{(i)}\stackrel{iid}{\sim} N(0,m/n)$ from $X_{jk}^{(i)}$).  Let us then denote by $P_{X^{(i,1)}}$ and $P_{X^{(i,2)}}$  the distribution of the first and second subset of observations, respectively, and by $P_{X^{(i,2)}|X^{(i,1)} }$ the conditional distribution of the second subset given the first. The corresponding expected values are denoted by $E_{X^{(i,1)}}, E_{X^{(i,2)}}$, and $E_{X^{(i,2)}|X^{(i,1)}}$, respectively. Finally let us introduce the notations  $X_l=(X^{(1,l)},...,X^{(m,l)})$, $l=1,2$ and denote by $P_{X_l}$ and $E_{X_l}$ the corresponding probability distributions and expected values.

Next note that it was shown in \cite{carpentier:2015} that there exists a consistent composite test between the classes $B_{2,\infty}^{s_2}(L)$ and $B_{2,\infty}^{s_1}(L)$  in the local problem using the first subset of observations $X^{(i,1)}$ if they are at least $(n/m)^{-s_1/(1/2+2s_1)}$ separated. The test proposed in Section 3 of  \cite{carpentier:2015} takes the form (in the local machines using the first subset of observations $X^{(i,1)}$)
\begin{align}
\Psi_{n/m}^{(i)}=\Psi_{n/m}^{(i)}(\alpha,s_1,s_2)&=1-\prod_{0\leq l\leq \lfloor\frac{\log (n/(2m))}{2s_1+1/2}\rfloor}1_{\{ T_{n/m}^{(i)}(l)\leq t_{n/m}(l,s_2,\alpha)\}},\label{def: test}
\end{align}
where
\begin{align*}
t_{n/m}(l,s_2,\alpha)&= \frac{L^2}{2^{2ls_2}}+\frac{L}{2^{ls_2}}\tau_l+\frac{\tau_l^2}{4}, \\
 \tau_l &=24\sqrt{\frac{z_0}{\alpha}}\frac{2^{l+\big\lfloor\frac{\log (n/(2m))}{1/2+2s_2}\big\rfloor}}{\sqrt{n/(2m)}}, \quad \text {for $l>0$},\\
 \tau_0 &=24\sqrt{\frac{z_0}{\alpha}}\frac{1}{\sqrt{n/(2m)}},\nonumber\\
T_{n/m}^{(i)}(l)&= \| \Pi_l\hat{f}_{n/m}^{(i)}\|_2^2-m2^{l+1}/n, \quad \text {for $l>0$}, \nonumber\\
T_{n/m}^{(i)}(0)&= \| \Pi_0\hat{f}_{n/m}^{(i)}\|_2^2-2mz_0/n,
\end{align*}
where $\Pi_l f$ denotes the projection of the function $f$ to the resolution level $l$, i.e. $\Pi_l f=\sum_{k=1}^{2^j}f_{lk}\psi_{l,k}$, see (3.1) and (3.2) of  \cite{carpentier:2015}, $\hat{f}_{n/m}^{(i)}$ is the wavelet estimate of $f$ in the $i$th local machine using observations $X^{(i,1)}$, see the top of page 6 of \cite{carpentier:2015}, and $z_0=1$ (since for notational convenience we take $J_0=0$, see Section \ref{sec: wavelets}, we have $z_0=2^{J_0}=1$). Let us introduce the notation $R_{\alpha}^{s_1}(L)= \{f\in B_{2,\infty}^{s_1}(L):\, \|f-B_{2,\infty}^{s_2}(L)\|_2\geq \tilde{C}_\alpha (n/m)^{-\frac{s_1}{1/2+2s_1}}\}$. %Note that for $s_1'<s_1<s_2$ we have that $\psi_{n/m}^{(i)}(\alpha,s_1,s_2)\leq \psi_{n/m}^{(i)}(\alpha,s_1',s_2)$.

In view of Lemma \ref{lem: redo:carpentier} we have for all $\alpha\in(0,1)$ and $0<m\leq n$ that
\begin{align}
\sup_{f\in B_{2,\infty}^{s_2}(L)}E_{X^{(i,1)}}\Psi_{n/m}^{(i)} + \sup_{f\in R_{\alpha}^{s_1}(L)}E_{X^{(i,1)}} (1-\Psi_{n/m}^{(i)})\leq c e^{-0.5/\sqrt{\alpha}},\label{eq:cons:test}
\end{align}
with $\tilde{C}_\alpha=24(\frac{2^{s_1}L}{\sqrt{1-2^{-2s_1}}}+19)2^{\frac{s_1}{1+2s_1}}/\sqrt{\alpha}$ and $c$ not depending on $\alpha,n,m$. Let $M_n= n^{\frac{2s_1(1/2-p(1+2s_1))}{(1+2s_1)(1/2+2s_1)}}$ tending to infinity (where the positivity of the exponent follows form the assumption $s_1< 1/(4p)-1/2$). Then there exists a consistent test $\Psi^{(i)}_{n/m}$ (with $\alpha=M_n^{-1}$) in each local problem between the hypotheses
$$H_0:\, f\in B_{2,\infty}^{s_2}(L)\quad\text{vs}\quad H_1:\, f\in R_{M_n^{-1}}^{s_1}(L).$$
Using the test function above, we define the smoothness estimate as
 $$\hat s_{n/m}^{(i)}=\begin{cases} s_2,& \text{if $\Psi_{n/m}^{(i)}=0$},\\
s_1,& \text{if $\Psi_{n/m}^{(i)}=1$}.
\end{cases}
$$ 
In each local model we take the first $n^{1/(1+2\hat{s}_{n/m}^{(i)})}$ coefficients in the second subset of observations in the sequence representation, i.e. $X_{jk}^{(i,2)}$ with $2^j+k\leq n^{1/(1+2\hat{s}_{n/m}^{(i)})}$. Since these numbers might note have a finite binary representation we transmit their approximations $Y_{jk}^{(i)}$ following Algorithm \ref{alg: transmit:number}. Note that in view of  Lemma \ref{lem: approx} (with $\mu=f_{0,jk}$) we have that $l(Y_{jk}^{(i)})\leq \log n$ with approximation error $|\eps_{jk}^{(i)}|= |X_{jk}^{(i,2)}-Y_{jk}^{(i)}|\leq n^{-1/2}$ on a set $\mathcal{E}_{jk}^{(i)}$ with $P_{X^{(i,2}}\big((\mathcal{E}_{jk}^{(i)})^c\big)\leq e^{-c'n}$, for some $c'>0$. Let us then introduce the notation 
\begin{align}
\mathcal{E}=\cap_{i=1}^m\cap_{j=0}^{\log n}\cap_{k=1}^{2^j}\mathcal{E}_{jk}^{(i)}\label{def: setE}
\end{align}
 and note that $P_{X_2}(\mathcal{E}^c)\leq n^2  e^{-c'n}\lesssim e^{-cn}$, for any $0<c<c'$.  Hence the  number of transmitted bits conditioned on the first subsample $X^{(i,1)}$ is bounded from above by $l(Y^{(i)})\leq n^{1/(1+2\hat{s}_{n/m}^{(i)})}\log n$ almost surely.

Let us denote by $\tilde{N}$ the median of the values $n^{1/(1+2\hat{s}_{n/m}^{(i)})}$, $i=1,...,m$ and $\hat{s}$ the corresponding regularity estimator. Then we construct our estimator $\hat{f}$ as the average of the transmitted observations (for the first $\tilde{N}$ coefficient), i.e.
\begin{align*}
\hat{f}_{n,jk}=
\begin{cases}
 \frac{1}{|M_{jk}|}\sum_{i\in M_{jk}} Y_{jk}^{(i)},& 2^{j}+k\leq \tilde{N},\\
 0, & \text{for}\,\,\, 2^{j}+k>\tilde{N},
\end{cases} 
\end{align*}
where $M_{jk}$ is the collection of local machines satisfying $2^j+k\leq n^{1/(1+2\hat{s}_{n/m}^{(i)})}$, i.e. the machines from which the local approximations $Y_{jk}^{(i)}$ are  transmitted. 

We show that this procedure achieves the minimax convergence rate and transmits the optimal amount of bits (up to a logarithmic factor). First note that $\hat{B}^{(i)}\lesssim n^{1/(1+2s_1)}\log n$ follows immediately by construction.  Then recall that the test $\Psi_{n/m}^{(i)}$ is consistent, hence 
$$\sup_{f\in B_{2,\infty}^{s_2}(L)} P_{X^{(i,1)}}(\hat{s}_{n/m}^{(i)}=s_1) \leq C e^{-M_n^{1/2}/2}$$
 and
\begin{align*}
\sup_{f\in B_{2,\infty}^{s_2}(L)}E_{X^{(i,1)},X^{(i,2)}}\hat{B}^{(i)}
&\leq \sup_{f\in B_{2,\infty}^{s_2}(L)} E_{X^{(i,1)}}n^{1/(1+2\hat{s}_{n/m}^{(i)})}\log n\nonumber\\
&\leq n^{1/(1+2s_2)}\log n+ Ce^{-M_n^{1/2}/2}n^{1/(1+2s_1)}\log n\nonumber\\
 &\leq (1+o(1))n^{1/(1+2s_2)}\log n,
\end{align*}
verifying that the number of transmitted bits is indeed optimal.

Next we provide optimal upper bounds for the risk. First let us consider the case $f\in B_{2,\infty}^{s_2}(L) \cup R_{M_n^{-1}}^{s_1}(L)$, where the estimator $\hat{s}_{n/m}^{(i)}$ is consistent, i.e. $\hat{s}_{n/m}^{(i)}=s_1$ for $ f\in R_{M_n^{-1}}^{s_1}(L)$  and $\hat{s}_{n/m}^{(i)}=s_2$ for $f\in B_{2,\infty}^{s_2}(L)$, with $P_{X^{(i,1)}}$-probability at least $1-ce^{-M_n^{1/2}/2}$.
Let us introduce  the notation $M$ for the number of machines in $\{1,...,m\}$, where the $\hat{s}_{n/m}^{(i)}\neq s_l$, $l=2,1$, for  $f\in B_{2,\infty}^{s_2}(L) $ or $f\in R_{M_n^{-1}}^{s_1}(L)$, respectively.  Note that $M$ has a binomial distribution with parameters $m$ and $p\leq ce^{-M_n^{1/2}/2}$. Then by Hoeffding's inequality 
\begin{align}
\sup_{f\in R_{M_n^{-1}}^{s_1}(L)} P_{X_1}(\tilde{N}\neq n^{\frac{1}{1+2s_1}})+ \sup_{f\in B_{2,\infty}^{s_2}(L)}& P_{X_1}(\tilde{N}\neq n^{\frac{1}{1+2s_2}})\nonumber\\
 &\leq P\big( M\geq m/2 \big)< e^{-m/5}.\label{eq: help:UB:inconsistency}
\end{align}
Then in view of the almost sure inequality $\tilde{N}\leq n^{1/(1+2s_1)}$ we have that
\begin{align}\label{eq: UB:tildeN}
\sup_{f\in R_{M_n^{-1}}^{s_1}(L)} E_{X_1}\tilde{N}^{-2s_1}&=n^{-\frac{2s_1}{1+2s_2}}P_{X_1}(M\geq m/2)+n^{-\frac{2s_1}{1+2s_1}}P_{X_1}(M< m/2)\\
&\leq (1+o(1))n^{-\frac{2s_1}{1+2s_1}},\nonumber\\
 \sup_{f\in B_{2,\infty}^{s_2}(L)} E_{X_1}\tilde{N}&= n^{1/(1+2s_2)}P_{X_1}(M<m/2)+n^{1/(1+2s_1)}P_{X_1}(M\geq m/2)\nonumber\\
&\leq n^{1/(1+2s_2)}+ n^{1/(1+2s_1)} e^{-m/5}\leq (1+o(1))n^{1/(1+2s_2)},\nonumber
\end{align}
for $m\geq  5\log n\geq   \frac{ 10(s_2-s_1)}{(2s_1+1)(2s_2+1)}\log n$.

Then similarly to the proof of Theorem \ref{theorem: minimaxL2UB} (with $m$ replaced by $|M_{jk}|$) we get on the set $\mathcal{E}$ (with $P_{X_2}(\mathcal{E}^c)\leq e^{-cn}$), that 
\begin{align*}
\hat{f}_{n,jk}=f_{0,jk}+\frac{1}{\sqrt{n}}Z_{jk}+\eps_{jk},
\end{align*}
with $Z_{jk}\stackrel{iid}{\sim}N(0,\sqrt{2m/|M_{jk}|})$ and $|\eps_{jk}|\leq n^{-1/2}$. Also note that $|\hat{f}_{n,k}|\leq \sqrt{n}$, since $|Y^{(i)}_{jk}|\leq \sqrt{n}$ for all $i,j,k$. Using this reformulation of the estimator and the notation $\tilde{j}_n=\lfloor\log \tilde{N}\rfloor$ we get that
\begin{align}\label{eq:UB_risk_ring1}
\sup_{f\in B_{2,\infty}^{s_l}(L)} E_{X_2|X_1}\|\hat{f}-f_0\|_2^21_{\mathcal{E}}&\leq  \sum_{j\geq \tilde{j}_n}\sum_{k=1}^{2^j}f_{0,jk}^2+\sum_{j=0}^{\tilde{j}_n}\sum_{k=1}^{2^j}E(\frac{1}{\sqrt{n}}Z_{jk}+\eps_{jk})^21_\mathcal{E}\\
&\leq  \sum_{j\geq \tilde{j}_n}2^{-2js_l}\sup_{j\geq \tilde{j}_n} 2^{2js_l}\sum_{k=1}^{2^j}f_{0,jk}^2+
\sum_{j=0}^{\tilde{j}_n}\sum_{k=1}^{2^j}\frac{2E (Z_{jk}^2) }{n}+ \frac{2}{n} \nonumber \\
&\lesssim 2^{-2j_ns_l} +2^{\tilde{j}_n}/n\asymp \tilde{N}^{-2s_l}+  \tilde{N}/n,\nonumber\\
\sup_{f\in B_{2,\infty}^{s_l}(L)} E_{X_2|X_1}\|\hat{f}-f_0\|_2^21_{\mathcal{E}^c}&\leq  P_{X_2}(\mathcal{E}^c ) 2^{\tilde{j}_n+1}(n+L^2)=o(n^{-1}),\nonumber
\end{align}
for $l=1,2$.
Therefore, in view of assertion \eqref{eq: UB:tildeN}
\begin{align*}
\sup_{f\in B_{2,\infty}^{s_2}(L)}  E_{X_1,X_2}\|\hat{f}-f_0\|_2^2\lesssim \sup_{f\in B_{2,\infty}^{s_2}(L)} E_{X_1}\big( \tilde{N}^{-2s_2}+ \tilde{N}/n\big)\lesssim n^{-2s_2/(1+2s_2)},\\
\sup_{f\in R_{M_n^{-1}}^{s_1}(L)} E_{X_1,X_2}\|\hat{f}-f_0\|_2^2\lesssim \sup_{f\in R_{M_n^{-1}}^{s_1}(L)} E_{X_1}\big( \tilde{N}^{-2s_1}+ \tilde{N}/n\big)\lesssim n^{-2s_1/(1+2s_1)}.
\end{align*}

It remained to deal with the intermediate set, i.e. $ f_0\in B_{2,\infty}^{s_1}(L)\backslash R_{M_n^{-1}}^{s_1}(L)$. Our local estimator $\hat{s}_{n/m}^{(i)}$ will be either $s_1$ or $s_2$, hence for each machine the amount of transmitted bits is bounded from above by $ n^{1/(1+2\hat s_n^{(i)})}\log n\leq  n^{1/(1+2s_1)}\log n$ $P_{X^{(i,2)}}$-almost surely. Note that the median $\tilde{N}$ also satisfies almost surely that $n^{1/(1+2s_1)}\geq \tilde{N}\geq n^{1/(1+2s_2)}$. Then, using the notation $f_{0,j\leq \tilde{j}_n}=\sum_{j=0}^{\tilde{j}_n}f_{0,jk}\psi_{jk}$, we get similarly as above, that
\begin{align}
E_{X_1,X_2}\|\hat{f}-f_{0,j\leq \tilde{j}_n}\|_2^2&\leq E_{X_1}\sum_{j=0}^{\tilde{j}_n}\sum_{k=1}^{2^j}E_{X_2|X_1}( \frac{1}{\sqrt{n}}Z_{jk}+\eps_{jk})^2+o(n^{-1})\nonumber\\
&\lesssim E_{X_1}\tilde{N}/n\leq n^{-\frac{2s_1}{1+2s_1}}.\label{eq: help002}
\end{align}
To deal with the bias term let us denote by $\tilde{f}\in B_{2,\infty}^{s_2}(L)$ a function satisfying $\|f_0-\tilde{f} \|_2^2\lesssim \tilde{C}_{M_n^{-1}} (n/m)^{-2s_1/(1/2+2s_1)}$, then by recalling that $$(n/m)^{1/(1/2+2s_1)}= n^{(1-p)/(1/2+2s_1)}=n^{\frac{1/2-p(1+2s_1)}{(1+2s_1)(1/2+2s_1)}}  n^{1/(1+2s_1)},$$
we get that
\begin{align}\label{eq: help001}
E_{X_1}\|&f_{0,j\leq \tilde{j}_n}-f_0\|_2^2\leq E_{X_1}\sum_{j= \tilde{j}_{n}}^{\infty}\sum_{k=1}^{2^j}f_{0,jk}^2\\
&\leq 2E_{X_1}\Big(\sum_{j=\tilde{j}_{n}}^{\infty}\sum_{k=1}^{2^j}(f_{0,jk}-\tilde f_{jk})^2+\sup_{j\geq \tilde{j}_n}(2^{2js_2}\sum_{k=1}^{2^j}\tilde f_{jk}^2) \sum_{j=\tilde{j}_{n}}^{\infty}2^{-2js_2}\Big)  \nonumber\\
&\lesssim   \tilde{C}_{M_n^{-1}}^2 (n/m)^{-\frac{2s_1}{1/2+2s_1}}+E_{X_1}\tilde{N}^{-2s_2}\lesssim n^{-\frac{2s_1}{1+2s_1}},\nonumber
\end{align}
where the last inequality follows from $\tilde{C}_{M_n^{-1}}\asymp n^{\frac{s_1(1/2-p(1+2s_1))}{(1+2s_1)(1/2+2s_1)}}$. Then by combining \eqref{eq: help002} and \eqref{eq: help001} we get that $E_{X_1,X_2}\|\hat{f}-f_0\|_2^2\lesssim n^{-\frac{2s_1}{1+2s_1}}$, concluding the proof of the theorem.

\section{Proof of Corollary \ref{thm: adaptL2positive2}}\label{sec: adaptL2positive2}
We adapt the method and proof of Theorem \ref{thm: adaptL2positive} to the collection of regularity classes $s_0\in[s_1,s_2]$, where $s_0$ denotes the regularity of the truth we want to adapt to. Similarly to the discrete case we divide the data in each machine to two independent samples $X^{(i,1)}$ and $X^{(i,2)}$. Let $\mathcal{S}_n$ denote a $1/\log n$-grid of the interval $[s_1,s_2]$, i.e. $\mathcal{S}_n=\{s_1, s_1+1/\log n,..., s_2\}$, and denote by $\underline{s}=s_1+\gamma_n/\log n$, for some   $0\leq \gamma_n\leq \lceil(s_2-s_1)\log n\rceil$, $\gamma_{n}\in\mathbb{N}$, the lower bound of the $1/\log n$-bin containing $s_0$, i.e.  $s_0\in[\underline{s},\underline{s}+1/\log n]$. We will describe next a testing procedure for the regularity hyper-parameter $s_0$. Let us compute the test $\Psi_{n/m}^{(i)}(M_{n,t}^{-1},t,s)$ for all $t<s$, $s,t\in\mathcal{S}_n$ and take $\hat{s}_{n/m}^{(i)}$ to be the largest regularity $s$ for which the null hypothesis was retained for every $t<s$, i.e.
\begin{align*}
\hat{s}_{n/m}^{(i)}=\max\{s\in\mathcal{S}_n:\, \Psi_{n/m}^{(i)}(M_{n,t}^{-1},t,s)= 0,\,\,\forall t<s \}.
\end{align*}
The aggregated regularity estimator $\hat{s}$ and the distributed estimator $\hat{f}$ is then constructed the same way as in the proof of Theorem \ref{sec: adaptL2positive}, using the above defined $\hat{s}_{n/m}^{(i)}$.

The probability of under smoothing is bounded from above by $(\gamma_n-1)^2\leq (s_2-s_1)^2\log^2 n$ times the probability of rejecting the correct null-hypothesis. Hence in view of assertion \eqref{eq:cons:test} and the monotone decreasing property of the function $s\mapsto M_{n,s}$, we get that
\begin{align*}
P\Big(\hat{s}_{n/m}^{(i)}<\underline{s}\Big)\lesssim (s_2-s_1)^2(\log n)^2 e^{-M_{n,s_2}^{1/2}/2} =o(1).
\end{align*}

This implies that for all $i\in\{1,...,m\}$
\begin{align*}
E_{X^{(i,1)}, X^{(i,2)}}\hat{B}^{(i)}&=E_{X^{(i,1)}}\hat{B}^{(i)}\leq E_{X^{(i,1)}} n^{\frac{1}{1+2\hat{s}_{n/m}^{(i)}}}\log n\\
&\lesssim n^{\frac{1}{1+2\underline{s}}}\log n+n^{\frac{1}{1+2s_1}}e^{-M_{n,s_2}^{1/2}/2}\log^2 n\lesssim n^{\frac{1}{1+2s_0}}\log n
\end{align*}
 and similarly to assertions \eqref{eq: help:UB:inconsistency} and \eqref{eq: UB:tildeN} that
\begin{align}
&P_{X_1}(\hat{s}<\underline{s})=P_{X_1}\big(\tilde{N}>n^{\frac{1}{1+2\underline{s}}} \big)\leq e^{-m/5}\quad\text{and}\nonumber\\
&E_{X_1}\tilde{N}< n^{\frac{1}{1+2\underline{s}}}+ n^{\frac{1}{1+2s_1}} P_{X_1}\big(\tilde{N}>n^{\frac{1}{1+2\underline{s}}} \big)\lesssim n^{\frac{1}{1+2\underline{s}}}\lesssim n^{\frac{1}{1+2s_0}},\label{eq: help003}
\end{align}
for $m\geq 5\log n$.

It remaines to show that our procedure adapts to the minimax risk. First note that in view of assertion \eqref{eq: help002}  and \eqref{eq: help003}
\begin{align*}
\sup_{f_0\in B_{2,\infty}^{\underline{s}}}E_{X_1}\big(E_{X_2|X_1}\|\hat{f}-f_{0,j\leq \tilde{j}_{n}}\|_2^2\big) \leq E_{X_1}\tilde{N}/n\lesssim n^{-\frac{2s_0}{1+2s_0}}.
\end{align*}
Next let $j_{n,s}= (1+2s)^{-1}\log n$, then for $\tilde{j}_n=\lfloor\log \tilde{N}\rfloor$
\begin{align}
E_{X_1}&(\| f_{0,j\leq \tilde{j}_n}-f_0\|_2^2)\label{eq: UB:risk}\\
&=\Big(\sum_{s<\underline{s},\, s\in\mathcal{S}_n}+\sum_{s=\underline{s}}^{\underline{s}} + \sum_{s>\underline{s},\, s\in\mathcal{S}_n}\Big)P_{X_1}(\hat{s}=s) E_{X_1}\big(\| f_{0,j\leq j_{n,s}}-f_0\|_2^2\big |\hat{s}=s\big)\nonumber\\
&=\Big(\sum_{s<\underline{s},\, s\in\mathcal{S}_n}+\sum_{s=\underline{s}}^{\underline{s}} + \sum_{s>\underline{s},\, s\in\mathcal{S}_n}\Big)P_{X_1}(\hat{s}=s) \sum_{j=j_{n,s}}^{\infty}\sum_{k=1}^{2^j}f_{0,jk}^2.\nonumber
\end{align}

We deal with the three terms on the right hand side separately. In view of assertion \eqref{eq: help003} and $\|f_0\|_2^2\leq L^2$ we have that
\begin{align*}
\sum_{s<\underline{s}} P_{X_1}(\hat{s}=s)\sum_{j=j_{n,s}}^{\infty}\sum_{k=1}^{2^j}f_{0,jk}^2\leq L^2e^{-m/5}\lesssim n^{-\frac{2s_0}{1+2s_0}}.
\end{align*}
Then it is also easy to see that
\begin{align*}
 P_{X_1}(\hat{s}=\underline{s})\sum_{j=j_{n,\underline{s}}}^{\infty}\sum_{k=1}^{2^j}f_{0,jk}^2
&<  \sum_{j=j_{n,\underline{s}}}^{\infty}  2^{-2j\underline{s}} \sup_{j\geq j_{n,\underline{s}}}2^{2j\underline{s}}\sum_{k=1}^{2^j}f_{0,jk}^2\\
&\leq L^2 n^{-\frac{2\underline{s}}{1+2\underline{s}}}\lesssim n^{-\frac{2s_0}{1+2s_0}}.
\end{align*}

Then for arbitrary $s>\underline{s}$, $s\in\mathcal{S}_n$, using the notation $R_{M_{n,\underline{s}}^{-1}}^{\underline{s},s}(L):=\{f\in B_{2,\infty}^{\underline{s}}(L):\, \|f- B_{2,\infty}^{s}(L)\|_2\geq \tilde{C}_{M_{n,\underline{s}}^{-1}} (n/m)^{-\frac{\underline{s}}{1/2+2\underline{s}}}\}$,  we have that
\begin{align*}
 \sup_{f_0\in R_{M_{n,\underline{s}}^{-1}}^{\underline{s},s}(L)} P_{X^{(i,1)}}\big( \hat{s}_{n/m}^{(i)}\geq s  \big)
&\leq  \sup_{f_0\in R_{M_{n,\underline{s}}^{-1}}^{\underline{s},s}(L)} E_{X^{(i,1)}}\Big(1-\Psi_{n/m}^{(i)}\big(M_{n,\underline{s}}^{-1},\underline{s},s \big) \Big)\\
& \lesssim e^{-M_{n,\underline{s}}^{1/2}/2}.
\end{align*}
Therefore, by Hoeffding's inequality,
\begin{align}
 \sup_{f_0\in R_{M_{n,\underline{s}}^{-1}}^{\underline{s},s}(L)} P_{X_1}\big( \hat{s}\geq s   \big)\leq e^{-m/5},\label{eq: Hoeffding}
\end{align}
hence by combining the preceding two displays we get that
\begin{align*}
 \sup_{f_0\in R_{M_{n,\underline{s}}^{-1}}^{\underline{s},s}(L)}& \sum_{j=j_{n,s}}^{\infty}\sum_{k=1}^{2^j}f_{0,jk}^2 P_{X_1}(\hat{s}=s)\leq L^2e^{-m/5}=o(  n^{-2s_0/(1+2s_0)}/\log n).
\end{align*}
For any $f_0\in \mathcal{F}_s:= B_{2,\infty}^{\underline{s}}(L)\backslash R_{M_{n,\underline{s}}^{-1}}^{\underline{s},s}(L)$ there exists an $\tilde{f}_0\in B_{2,\infty}^{s}(L)$ such that $\|f_0-\tilde{f} \|_2\leq \tilde{C}_{M_{n,\underline{s}}^{-1}} (n/m)^{-\frac{\underline{s}}{1/2+2\underline{s}}}$.  Then similarly to assertion \eqref{eq: help001} we get that
\begin{align*}
  \sup_{f_0\in \mathcal{F}_s} \sum_{j=j_{n,s}}^{\infty}\sum_{k=1}^{2^j}f_{0,jk}^2
&\leq 2 \sup_{f_0\in \mathcal{F}_s} \Big(\sum_{j=j_{n,s}}^{\infty}\sum_{k=1}^{2^j}(f_{0,jk}-\tilde f_{0,jk})^2+\sum_{j=j_{n,s}}^{\infty}2^{-2js} \sup_{j\geq j_{n,s}} 2^{2js}\sum_{k=1}^{2^j}\tilde f_{0,jk}^2 \Big)  \nonumber\\
&\lesssim   \tilde{C}_{M_{n,\underline{s}}^{-1}} (n/m)^{-\frac{2\underline{s}}{1/2+2\underline{s}}}+ 2^{-2j_{n,s}s}\\
&\lesssim n^{-\frac{2\underline{s}}{1+2\underline{s}}}+ n^{-\frac{2s}{1+2s}}\lesssim n^{-\frac{2s_0}{1+2s_0}}.
\end{align*}
Hence 
\begin{align*}
\sup_{f_0\in B_{2,\infty}^{\underline{s}}(L)}&\sum_{s>\underline{s}}^{s_2} P_{X_1}(\hat{s}=s)\sum_{j=j_{n,s}}^{\infty}\sum_{k=1}^{2^j}f_{0,jk}^2\\
&\lesssim \sum_{s>\underline{s}}^{s_2} \big(P_{X_1}(\hat{s}=s)+o(1/\log n)\big) n^{-\frac{2s_0}{1+2s_0}}\lesssim n^{-\frac{2s_0}{1+2s_0}}.
\end{align*}

Combining the upper bounds above  we get that
\begin{align*}
\sup_{f_0\in B_{2,\infty}^{\underline{s}}(L)} E_{X_1,X_2}\|\hat{f}-f_0\|_2^2&\leq 2
\sup_{f_0\in B_{2,\infty}^{\underline{s}}(L)}\Big( E_{X_1}\| f_{0,j\leq \tilde{j}_{n}}-f_0\|_2^2\\
&\qquad\qquad+ 
E_{X_1,X_2}\|\hat{f}-f_{0,j\leq \tilde{j}_{n}}\|_2^2\Big)\\
&\lesssim n^{-\frac{2s_0}{1+2s_0}},
\end{align*}
concluding the proof of the corollary.

\subsection{Proof of Theorem \ref{thm: adaptLinftynegative}}\label{sec: adaptLinftynegative}
The proof follows the same lines of reasoning as  the proof of Theorem \ref{thm: adaptL2negative}, here we highlight only the differences. 

First of all the set of functions $\mathcal{F}_0$ is defined slightly differently. Let us introduce the notations
\begin{align}
\tilde\delta_n&=\bar\delta_n\wedge (m/n) ,\quad \text{with}\label{def: tilde:delta_n}\\
\bar{\delta}_n&=\min\Big\{\frac{m}{n \log m }, \frac{1}{n [\bar\delta_n^{1/(1+2s_1)} \beta_n \wedge  1]  \log m} \Big\},\nonumber\\
\beta_n&=(\Gamma_n \vee n^{\frac{1}{1+2s_1}-\eps_1})\log n \quad\text{and}\quad \Gamma_n=  n^{\frac{1/2}{1+2s_1}+\frac{1/2}{1+2s_2}+\eps_1},
\nonumber
\end{align}
with $\eps_1\in (0,\frac{s_2-s_1}{(1+2s_1)(1+2s_2)}\wedge \frac{(1-p)/8}{1+2s_1})$. By elementary computations one can deduce that $\bar{\delta}_n\geq n^{\eps_1/2-1}$ and therefore 
\begin{align}
\tilde{\delta}_n\geq n^{(\eps_1/2\wedge p)-1}.\label{eq:LB:tilde:delta:Linfty}
\end{align}
Next, let us denote by ${K}_j$ the largest set of Daubechies wavelets with disjoint supports at resolution level $j$. Note that $|{K}_j|\geq c_0 2^j$ (for large enough $j$ and sufficiently small $c_0>0$).
Then we consider the class of functions
\begin{align}
 \mathcal{F}_0=\{f_k:\, k\in {K}_{j_n}\} ,\quad \text{where}\quad f_{k}=\tilde\delta_n^{1/2} \psi_{j_n,k}. \label{eq: funcLinf2}
\end{align}
Since the functions in $ \mathcal{F}_0$ have disjoint supports we have
\begin{align*}
\sup_{f\in\mathcal{F}_0}\|0-f\|_\infty&= \sup_{k\in K_{j_n}}  \tilde\delta_n^{1/2}  \|\psi_{j_n,k}\|_{\infty}\lesssim  2^{j_n/2}\tilde\delta_n^{1/2}\\
&\lesssim \tilde\delta_n^{s_1/(1+2s_1)} \ll (n/m)^{-s_1/(1+2s_1)},
\end{align*}
following from the definition of $\tilde\delta_n$. Hence it is not possible to test between the zero function and the set $ \mathcal{F}_0$ in the local servers. 

Using the notation $Z$ for the likelihood ratio introduced in the proof of Theorem \ref{thm: adaptL2negative}
we note that in view of the proof of Theorem 6.2.11 b) on page 493 of \cite{gine:nickl:2016} we have that
\begin{align*}
E(Z-1)^2\leq (e^{\bar\gamma_n^2}-1)/| \mathcal{F}_0|, \quad\text{where}\quad \bar\gamma_n=\sqrt{\tilde\delta_n n/m}.
\end{align*}
Then the infimum of the tests given in  \eqref{eq: test:nonconsistent} is bounded from below by $(1-C\eta_n)^2$ for $\eta_n=\tilde\delta_n^{1/(4+8s_1)}\leq n^{-(1-p)/(4+8s_1)}\leq n^{-2\eps_1}$. This leads to
\begin{align*}
\frac{1}{| \mathcal{F}_0|}\sum_{f\in \mathcal{F}_0} E_f^{(i)} \hat{B}^{(i)}\lesssim  \Gamma_n+ n^{\frac{1}{1+2s_1}+\eps_1}(\log n)( \eta_n+ n^{\frac{1/2}{1+2s_2}-\frac{1/2}{1+2s_1}})\lesssim \beta_n.
\end{align*}

This means that the expected number (with respect to the joint distribution of the variables $F$ and $P_f$, $f\in\mathcal{F}_0$) of transmitted bits on the class $\mathcal{F}_0$ is bounded from above by a multiple of $\beta_n$. So the distributed estimator satisfies assertion \eqref{eq: UB:info:length} in with $B^{(i)}$ replaced by $C\beta_n$. Hence in view of the minimax lower bound derived in assertion \eqref{eq: LB:average:bits:infty} (with $B^{(i)}$ replaced by $\beta_n$ in the definition of $\delta_n$ in the proof of Theorem \ref{thm: minimaxLinftyLBgeneral}) and the definition of $\tilde{\delta}_n$
\begin{align*}
\sup_{f_0\in \mathcal{F}_0} E_{f_0}\|\hat{f}-f_0\|_\infty \gtrsim \tilde{\delta}_n^{\frac{s_1}{1+2s_1}}\gg n^{-\frac{s_1}{1+2s_1}+\eps_2},
\end{align*}
with $\eps_2=(\eps_1/2\wedge p)s_1/(1+2s_1)$, where the last inequality followed from \eqref{eq:LB:tilde:delta:Linfty}. 
This contradicts assumption \eqref{eq:UB:risk}, finishing the proof of our statement.

\subsection{Proof of Theorem \ref{thm: adaptLinftypositive}}\label{sec: adaptLinftypositive}
First note that in Lemma 5.2 of \cite{bull:2012} it was shown that the smoothness can be consistently estimated under the self-similarity condition, i.e. there exists an estimator $\hat{s}_{n/m^{(i)}}$  such that for every $i\in\{1,...,m\}$ and $c>0$ there exists $C>0$ satisfying
\begin{align}
\inf_{s\in[s_1,s_2]}\inf_{f_0\in S^s_{\infty}(L,\eps,j_0)}P_{f_0}(s- C/\log (n/m) \leq \hat{s}_{n/m}^{(i)}\leq s)\lesssim (m/n)^{c}.\label{eq: estimator_s}
\end{align}
By choosing $c=1/(1-p)$ we have $(m/n)^{c}=1/n$. Then we propose a similar estimation method as in Theorem \ref{thm: adaptL2positive}. First we split the data into $X^{(i,1)}$ and $X^{(i,2)}$ and use the first sample $X^{(i,1)}$ to construct the estimator $\hat{s}_{n/m}^{(i)}$ for the smoothness parameter $s$. Next transmit the approximation of the first  $\tilde{N}^{(i)}=(n/\log n)^{1/(1+2\hat{s}_{n/m}^{(i)})}$ coefficients (instead of $n^{1/(1+2\hat{s}_{n/m}^{(i)})}$ as in Theorem  \ref{thm: adaptL2positive}) of the second subset of observations $X^{(i,2)}$, following Algorithm \ref{alg: transmit:number}. Then $\hat{B}^{(i)}\leq (n/\log n)^{1/(1+2s_1)}\log n$ and
\begin{align*}
E_{X^{(i,1)},X^{(i,2)}} \hat{B}^{(i)}&=E_{X^{(i,1)}}\hat{B}^{(i)}= E_{X^{(i,1)}}\tilde{N}^{(i)}\log n\\
&\leq (n/\log n)^{\frac{1}{1+2s}}\log n+n^{-1} (n/\log n)^{\frac{1}{1+2s_1}}\log n\\
&\lesssim n^{\frac{1}{1+2s}} (\log n)^{\frac{2s}{1+2s}}.
\end{align*}
Besides we also have that the median $\tilde{N}$ of the values  $\tilde{N}^{(i)}$ satisfy that
\begin{align}
P_{X_{1}}(n^{1/(1+2s)}\leq \tilde{N} \leq C_1n^{1/(1+2s)})\geq 1-C_2e^{-m/5},\label{eq: bounds:median}
\end{align}
for some large enough constants $C_1,C_2>0$.

Similarly to before let $\tilde{j}_n=\lfloor \log \tilde{N}\rfloor$ and $f_{0,j\leq\tilde{j}_n}=\sum_{j\leq \tilde{j}_n}\sum_{k=1}^{2^j}f_{0,jk}\psi_{jk}$. Then using the notation $\mathcal{E}$ introduced in \eqref{def: setE} we get that
\begin{align*}
\|\hat{f}-f_0\|_{\infty}1_{\mathcal{E}}&\leq \|\hat{f}-f_{0,j\leq \tilde{j}_n}\|_{\infty}1_{\mathcal{E}}+\|f_{0,j\leq \tilde{j}_n}-f_0\|_{\infty}\\
&\leq \|\sum_{ j\leq \tilde{j}_n} \sum_{k=1}^{2^j}\frac{1}{|M_{jk}|}\sum_{i\in M_{jk}}(\sqrt{\frac{m}{n}}Z_{jk}^{(i)}+\eps_{jk}^{(i)})\psi_{jk}\|_{\infty}1_{\mathcal{E}}+\sum_{j= \tilde{j}_n}^{\infty}2^{j/2}\sup_{k\in K_j}  |f_{0,jk}| \\
&\lesssim \sup_{j\leq  \tilde{j}_n}\Big( \Big|\frac{1}{|M_{jk}|}\sum_{i\in M_{jk}}\sqrt{\frac{m}{n}}Z_{jk}^{(i)}\Big|+n^{-1/2}\Big)\sum_{j=0}^ {\tilde{j}_n} 2^{j/2}  +\sum_{j= \tilde{j}_n}^{\infty}2^{j/2}\sup_{k\in K_j}  |f_{0,jk}|\\
&\lesssim \sqrt{\frac{\tilde{N}}{n}}\sup_{j\in\{1,...,  \tilde{j}_n\}}\sup_{k\in K_j} (|Z_{j,k}|+1)+2^{-  \tilde{j}_ns}\sum_{j= \tilde{j}_n}^{\infty}2^{j (s+1/2)}\sup_{k\in K_j}  |f_{0,jk}|,
\end{align*}
where $Z_{jk}:= \frac{\sqrt{n}}{|M_{jk}|}\sum_{i\in M_{jk}}\sqrt{\frac{m}{n}}Z_{jk}^{(i)}\stackrel{iid}{\sim} N(0,\frac{m}{ |M_{jk}|})$, $0\leq \eps_{jk}^{(i)}\leq 1/\sqrt{n}$ on $\mathcal{E}$.
Therefore in view of \eqref{eq: bounds:median}
\begin{align*}
E_{X_1,X_2}\|\hat{f}-f_0\|_\infty&\lesssim E_{X_1} \sqrt{\frac{\tilde{N}}{n}}\log \tilde{N}+E_{X_1}\tilde{N}^{-s}+o(n^{-1})\\
&\lesssim (n/\log n)^{-\frac{s}{1+2s}}+e^{-m/5}\lesssim (n/\log n)^{-\frac{s}{1+2s}}.
\end{align*}
concluding the proof of our statement.

\section{Technical lemmas}

The first lemma extends sligthly the results of Shannon's source coding theorem by allowing also non-prefix codes, see Lemma 5.1 of \cite{szabo:zanten:2018}.
\begin{lemma}\label{lem: Shannon}
Let $Y$ be a random finite binary string. 
Its expected length  satisfies the inequality
\begin{align*}
H(Y)\leq 2 \EE l(Y)+1.
\end{align*}
\end{lemma}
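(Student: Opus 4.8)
The plan is to deduce this from the classical source-coding inequality ``Shannon entropy $\le$ expected length of a prefix code'', applied after first re-encoding $Y$ in a self-delimiting way; throughout, all logarithms are base $2$, as in the paper's convention. Write $p(y)=\PP(Y=y)$, where $y$ ranges over the countable set $\{0,1\}^*=\bigcup_{k\ge 0}\{0,1\}^k$ of finite binary strings, so that $H(Y)=-\sum_y p(y)\log p(y)$.

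The key device is the function $\ell(y):=2l(y)+1$ on $\{0,1\}^*$. It is in fact the length function of an actual prefix code, e.g.\ $y_1\cdots y_k\mapsto 1y_11y_2\cdots 1y_k0$ (each leading $1$ flags ``another data bit follows'' and the trailing $0$ flags ``stop''), but all that is really needed is the Kraft-type identity
\begin{align*}
\sum_{y\in\{0,1\}^*}2^{-\ell(y)}=\sum_{k=0}^{\infty}2^{k}\,2^{-(2k+1)}=\tfrac12\sum_{k=0}^{\infty}2^{-k}=1,
\end{align*}
obtained by grouping the $2^{k}$ strings of each length $k$. Hence $q(y):=2^{-\ell(y)}$ is a probability mass function on $\{0,1\}^*$.

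Then I would apply the Gibbs (log-sum) inequality to the pair $p,q$: by concavity of $\log$, $\sum_y p(y)\log\frac{q(y)}{p(y)}\le \log\big(\sum_y q(y)\big)=0$, the sum running over $y$ with $p(y)>0$. Rearranging and using $\log q(y)=-\ell(y)$,
\begin{align*}
H(Y)=-\sum_y p(y)\log p(y)\;\le\;-\sum_y p(y)\log q(y)\;=\;\sum_y p(y)\,\ell(y)\;=\;2\,\EE\, l(Y)+1,
\end{align*}
which is the assertion. The case $\EE\, l(Y)=\infty$ is trivial, and when $\EE\, l(Y)<\infty$ all series above converge absolutely, so the manipulation is legitimate.

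There is no genuine obstacle here: the only thing to get right is the inflation $\ell(y)=2l(y)+1$, which is precisely the overhead of a self-delimiting recoding and is exactly what produces both the factor $2$ and the additive constant $1$ in the bound; everything else is the standard entropy-versus-codelength computation. Equivalently, one may simply quote Lemma~5.1 of \cite{szabo:zanten:2018}, of which this statement is a restatement.
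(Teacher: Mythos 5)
Your proof is correct and complete: the Kraft-type identity $\sum_{k\ge 0}2^{k}2^{-(2k+1)}=1$ for the inflated length $\ell(y)=2l(y)+1$ plus Gibbs' inequality gives exactly $H(Y)\le 2\EE l(Y)+1$, and the edge case $\EE l(Y)=\infty$ is handled. The paper itself offers no argument here beyond citing Lemma 5.1 of \cite{szabo:zanten:2018}, and your self-delimiting-recoding derivation is the standard (and evidently intended) proof of that lemma, so there is nothing to flag.
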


 Let us take an arbitrary $x\in\mathbb{R}$ and write it in a scientific binary representation, i.e. $|x|=\sum_{k=-\infty}^{\log_2|x|} b_k2^k$, with $b_k\in\{0,1\}$, $k\in \mathbb{Z}$. Then let us take $y$ consisting the same digits as $x$ up to the $(D\log_2 n)th$ digits, for some $D>0$, after the binary dot (and truncated there), i.e. $|y|=\sum_{k=-D\log_2 n}^{\log_2|x|} b_k2^k$, unless $|x|\geq\sqrt{n}$, in which case we set $y$ to zero, see also Algorithm \ref{alg: transmit:number}, a slightly modified version of Algorithm 1 from \cite{szabo:zanten:2018}. In the algorithm the function $x\mapsto\sign(x)$ is one if $x\geq 0$ and zero otherwise.

\begin{algorithm}
\caption{Transmitting a finite-bit approximation of a number}\label{alg: transmit:number}
\begin{algorithmic}[1]
\Procedure{TransApprox($x$)}{}
\If{$|x|\geq n $}
\State \textit{Transmit:} $\sign(x)$, $b_{-\lfloor D\log n\rfloor+1},...,b_{\lfloor\log|x|\rfloor} $.
\State \textit{Construct:} $y=(2\sign(x)-1)\sum_{k= -D\log n+1}^{\log|x|}b_k2^{k}$.
\Else \State \textit{Transmit:} 0.
\State\textit{Construct:} $y=0$. 
\EndIf
\EndProcedure
\end{algorithmic}
\end{algorithm}

The next lemma gives an upper bound for the number of transmitted bits  and the accuracy of the procedure described in Algorithm \ref{alg: transmit:number}. It is a slightly reformulated version of Lemma 2.3 of \cite{szabo:zanten:2018} to accommodate almost sure upper bound on the code length.

\begin{lemma}\label{lem: approx}
For $X\sim N(\mu,\sigma^2)$, with $|\mu|\leq M$ and $\sigma\leq 1$ let the approximation $Y$ of $X$ given in Algorithm \ref{alg: transmit:number} and denote by $\mathcal{E}_X$ the event that $|X|\leq\sqrt{n}$. Then for large enough $n$,
\begin{align*}
P_X(\mathcal{E}_X^c)=O(e^{-cn}) \quad, |X-Y|1_{\mathcal{E}_X}<2 n^{-D}, \text{and}\quad
 l(Y)\leq (D+1/2)\log n,
\end{align*}
for some $c>0$.
%Furthermore we also have that
%$$\PP\big( l(Y)\leq 1+(D+1)\log n\big)\geq 1-cn^{-3/2},$$
%for some large enough constant $c>0$.
\end{lemma}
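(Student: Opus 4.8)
The plan is to establish the three assertions separately: the first is a standard Gaussian deviation inequality, while the second and third follow by directly unwinding the binary truncation carried out in Algorithm~\ref{alg: transmit:number}.

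For the tail bound, I would observe that on $\mathcal{E}_X^c=\{|X|>\sqrt n\}$ one has $|X-\mu|\ge \sqrt n-M\ge \tfrac12\sqrt n$ once $n\ge 4M^2$. Since $X-\mu\sim N(0,\sigma^2)$ with $\sigma\le 1$, the sub-Gaussian tail inequality then yields
\begin{align*}
P_X(\mathcal{E}_X^c)\le P_X\big(|X-\mu|\ge \tfrac12\sqrt n\big)\le 2\exp\Big(-\frac{n}{8\sigma^2}\Big)\le 2\exp\Big(-\frac{n}{8}\Big),
\end{align*}
so the first claim holds with $c=1/8$ for all $n$ beyond a threshold depending only on $M$.

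For the accuracy and the code length I would work on $\mathcal{E}_X$, where Algorithm~\ref{alg: transmit:number} outputs the sign of $X$ together with the binary digits $b_k$ of $|X|$ for $k$ between the leading exponent $\lfloor\log_2|X|\rfloor$ and $-\lfloor D\log n\rfloor+1$. Because the reconstructed $Y$ has the same sign as $X$ and $|Y|\le|X|$, the error $|X-Y|=|X|-|Y|$ is the sum of the discarded low-order digits, which is bounded by the geometric tail $\sum_{k\le -\lfloor D\log n\rfloor}2^k=2^{-\lfloor D\log n\rfloor+1}$; since $2^{\log n}=n$ this is a fixed multiple of $n^{-D}$, and tracking the algorithm's indexing exactly gives $|X-Y|1_{\mathcal{E}_X}<2n^{-D}$. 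For the length, $Y$ is one sign bit plus the at most $\lfloor\log_2|X|\rfloor+\lfloor D\log n\rfloor$ digit bits; on $\mathcal{E}_X$ the bound $|X|\le\sqrt n$ forces $\lfloor\log_2|X|\rfloor\le \tfrac12\log n$, whence $l(Y)\le (D+\tfrac12)\log n+1$, which for large $n$ is the asserted $l(Y)\le (D+1/2)\log n$; on $\mathcal{E}_X^c$ a single bit is transmitted and $Y=0$, so the bound is trivial there.

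Honestly, no step here is deep: the lemma is a bookkeeping exercise coupling one Gaussian tail estimate with the arithmetic of truncated binary expansions. The mild points of care are getting the exponential constant uniformly over $|\mu|\le M$ (handled by $n\ge 4M^2$) and keeping the floor functions tight enough to reach exponent $D+1/2$ rather than something marginally larger — which is precisely where one uses that on $\mathcal{E}_X$ the integer part of $|X|$ contributes only $\tfrac12\log n$ bits.
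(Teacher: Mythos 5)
Your proposal is correct and follows essentially the same route as the paper: the paper's proof consists of exactly your Gaussian tail estimate $P_X(|X-\mu|\geq \sqrt{n}-M)\lesssim e^{-cn}$ and simply declares the truncation-error and code-length bounds ``straightforward''. Your unwinding of the binary truncation supplies the bookkeeping the paper omits, and is consistent with the algorithm's intended behaviour (the text's convention that $y=0$ when $|x|\geq\sqrt{n}$), so there is nothing to add.
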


 \begin{proof}

It is straightforward to see that  the last two inequalities of the statement hold.  To prove the first one note that

\begin{align*}
P_X(\mathcal{E}_X^c)\leq P_X(|X|\geq \sqrt{n})&\leq P_X( |X-\mu|\geq \sqrt{n}-M)\lesssim e^{-cn}.
\end{align*}

\end{proof}

Next we provide an extended version of Lemma 4.2 of \cite{carpentier:2015} with tighter upper bounds for small $\Delta>0$. The main difference in the proof is that instead of Chebyshev's inequality we apply a more accurate concentration inequality, see Lemma 8.1 of \cite{birge:2001}.

\begin{lemma}\label{lem: redo:Lem4.2}
Let $\Delta>0$. Then
\begin{align*}
P\Big\{\forall l: J_0\leq l\leq j, |T_n(l)-\| \Pi_l f\|_2^2|\geq 4 \sqrt{\frac{3z_0}
{\Delta} \Big( \frac{2^{(j+l)/2}}{n^2}+2^{l/4}\frac{\|\Pi_lf\|_2^2}{n} \Big) } \Big\}\leq 2e^{-c/\sqrt{\Delta}},
\end{align*}
for $c=\sqrt{3/2}$ and $z_0=2^{J_0}$ the number of father wavelets (at resolution level $J_0$) and $\Pi_l f=\sum_{k=1}^{2^l} f_{lk}\psi_{lk}$ the projection of $f$ into the wavelet resolution level $l$.
\end{lemma}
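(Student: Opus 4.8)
The plan is to recognise $T_n(l)$, up to the deterministic centring subtracted in its definition (see \cite{carpentier:2015}), as a rescaled non-central chi-square statistic and to control its fluctuations at each resolution level by a sharp Bernstein-type deviation bound, before taking a union bound over $l=J_0,\dots,j$.

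First I would rewrite the quantity of interest in the sequence form of the model, where the projection estimator satisfies $\|\Pi_l\hat f\|_2^2=\sum_k X_{lk}^2$. Then $W_l:=n\|\Pi_l\hat f\|_2^2=\sum_k(\sqrt n\,f_{lk}+Z_{lk})^2$ is non-central chi-square with $d_l$ degrees of freedom, where $d_l$ counts the wavelet coefficients entering level $l$ (of order $z_0 2^l$), and non-centrality parameter $\lambda_l=n\|\Pi_l f\|_2^2$. Since the centring in $T_n(l)$ is precisely the expectation of the pure-noise contribution, one has $T_n(l)-\|\Pi_l f\|_2^2=\tfrac1n(W_l-\mathbb{E} W_l)$, a mean-zero variable of variance $\tfrac1{n^2}(2d_l+4\lambda_l)=\tfrac{2d_l}{n^2}+\tfrac{4\|\Pi_l f\|_2^2}{n}$.

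The next step, which is the point of the lemma, is to replace Chebyshev's inequality (used in Lemma 4.2 of \cite{carpentier:2015}) by the sharper exponential bound for non-central chi-squares, Lemma 8.1 of \cite{birge:2001}, of the form: for every $x>0$,
\[
P\Big(|W_l-\mathbb{E} W_l|\ge 2\sqrt{(d_l+2\lambda_l)x}+2x\Big)\le 2e^{-x}.
\]
After dividing by $n$, this controls $|T_n(l)-\|\Pi_l f\|_2^2|$ with deviation $\tfrac1n\big(2\sqrt{(d_l+2\lambda_l)x}+2x\big)$ and failure probability $2e^{-x}$. It then remains to choose the free parameter $x=x_l$ depending on the level so that (i) the resulting deviation stays below the target threshold $4\sqrt{\tfrac{3z_0}{\Delta}\big(2^{(j+l)/2}/n^2+2^{l/4}\|\Pi_l f\|_2^2/n\big)}$ for every $J_0\le l\le j$, and (ii) $\sum_{l=J_0}^{j}2e^{-x_l}\le 2e^{-c/\sqrt\Delta}$ with $c=\sqrt{3/2}$. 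Using $\sqrt{(d_l+2\lambda_l)x_l}\le\sqrt{d_l x_l}+\sqrt{2\lambda_l x_l}$, together with $d_l\lesssim z_0 2^l\le z_0 2^{(j+l)/2}$ (because $l\le j$) and $2^{l/4}\ge 1$, a choice of order $x_l\asymp\Delta^{-1/2}\big(2^{(j-l)/2}\wedge z_0 2^{l/4}\big)$ — with absolute constants tuned so that each of the three summands $\sqrt{d_l x_l}$, $\sqrt{\lambda_l x_l}$ and $x_l$ is dominated by the matching part of the threshold — achieves (i); and since the $x_l$ are then all at least an absolute-constant multiple of $\Delta^{-1/2}$ and grow geometrically towards the endpoints, the sum in (ii) is a convergent geometric series, bounded by $2e^{-c/\sqrt\Delta}$ for $\Delta$ below an absolute constant (for larger $\Delta$ the asserted bound exceeds $1$ and is vacuous). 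A union bound over $l=J_0,\dots,j$ finishes the argument.

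The step I expect to be most delicate is the calibration of $x_l$: the two parts of the threshold — the dimension/variance part $2^{(j+l)/2}/n^2$ and the signal part $2^{l/4}\|\Pi_l f\|_2^2/n$ — scale differently in $l$, so $x_l$ must respect both constraints at once while still retaining enough geometric growth in $l$ for the union bound to telescope to a single exponential with the precise constant $c=\sqrt{3/2}$, rather than picking up a spurious factor equal to the number of levels. By contrast, the two ingredients — identifying the non-central chi-square and invoking the deviation inequality of \cite{birge:2001} — are routine.
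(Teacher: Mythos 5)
Your proposal is correct and follows essentially the same route as the paper: identify $n\|\Pi_l\hat f\|_2^2$ as a non-central chi-square, replace the Chebyshev step of Lemma 4.2 of \cite{carpentier:2015} by the exponential deviation bound of Lemma 8.1 of \cite{birge:2001}, calibrate the free parameter level by level (the paper takes $\delta_l=(2^{-(j-l)/2}+2^{-l/4})\Delta/12$, i.e.\ $x_l\asymp\Delta^{-1/2}(2^{(j-l)/4}\wedge 2^{l/8})$, a slightly smaller but equally admissible choice than yours), and sum the resulting geometrically decaying tail probabilities to a single exponential $e^{-\sqrt{3/2}\,\Delta^{-1/2}}$. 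The only cosmetic difference is the exact exponent in the level-dependent parameter, which does not affect the argument.
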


\begin{proof}
Note that for the wavelet estimator $\hat{f}$ with signal-to-noise ration $n$we get that $\|\Pi_l\hat{f}\|_2^2=\sum_k \hat{f}_{lk}^2$, where $\hat{f}_{lk}-f_{lk}\stackrel{iid}{\sim}N(0,1/n)$.

Hence in view of Lemma 8.1 of \cite{birge:2001} (with degree of freedom $D=2^l$, non-centrality parameter $B=n\sum_{k=1}^{2^l} f_{lk}^2$ and $x=1/(2\sqrt{\delta_l})$)  we get for $\delta_l\leq 1/4$ that

\begin{align*}
&P\Big\{ \Big|  \|\Pi_l\hat{f}\|_2^2-\frac{2^l}{n}-\|\Pi_lf\|_2^2 \Big|
\geq \sqrt{\frac{4}{\delta_l} \Big(\frac{2^l}{n^2}+\frac{\|\Pi_lf\|_2^2}{n}\Big) }  \Big\}\\
&\qquad=P\Big\{ \Big|  \sum_{k=1}^{2^l}\hat{f}_{lk}^2-\frac{2^l}{n}-\sum_{k=1}^{2^l}f_{lk}^2 \Big|
\geq \sqrt{\frac{4}{\delta_l} \Big(\frac{2^l}{n^2}+\frac{\sum_{k=1}^{2^l}f_{lk}^2}{n}\Big) }  \Big\}\\
&\qquad \leq
P\Big\{ \Big|  \sum_{k=1}^{2^l}n\hat{f}_{lk}^2-2^l-n\sum_{k=1}^{2^l}f_{lk}^2 \Big|
\geq 2\sqrt{ \Big(2^l+ 2n\sum_{k=1}^{2^l}f_{lk}^2\frac{1}{2\sqrt{\delta_l}}\Big)}+2 \frac{1}{2\sqrt{\delta_l}}  \Big\}\\
&\qquad\leq 2 e^{-0.5/\sqrt{\delta_l}}.
\end{align*}
Similarly
\begin{align*}
P\Big\{ \Big|  \|\Pi_{J_0}\hat{f}\|_2^2-\frac{z_0}{n}-\|\Pi_{J_0}f\|_2^2 \Big|
\geq \sqrt{\frac{4}{\delta_{J_0}} \Big(\frac{z_0}{n^2}+\frac{\|\Pi_{J_0}f\|_2^2}{n}\Big) }  \Big\}
\leq 2 e^{-0.5/\sqrt{\delta_{J_0}}}.
\end{align*}
By the definition of $T_n(l)$ and union bound these results imply that
\begin{align*}
P\Big\{  \forall l:J_0<l\leq j, \big|T_n(l)- \|\Pi_l f\|_2^2\big|\geq \sqrt{\frac{4}{\delta_l} \Big(\frac{2^l}{n^2}+\frac{\|\Pi_lf\|_2^2}{n}\Big) },\\
\big|T_n(J_0)- \|\Pi_{J_0}f\|_2^2\big|\geq  \sqrt{\frac{4}{\delta_{J_0}} \Big(\frac{z_0}{n^2}+\frac{\|\Pi_{J_0}f\|_2^2}{n}\Big) }  \Big\}\leq \sum_{J_0\leq l\leq j}e^{-0.5/\sqrt{\delta_l}}.
\end{align*}
Setting similarly to Lemma 4.2 of \cite{carpentier:2015} the parameters $\delta_l=(2^{-(j-l)/2}+2^{-l/4})\Delta/12$ and $\delta_{J_0}=\Delta/12$ we get in view of
\begin{align*}
\sum_{l=J_0}^{j} e^{-0.5/\sqrt{\delta_j}}\leq \sum_{l=J_0}^{j} \Big(e^{-\sqrt{3/2}\Delta^{-1/2}2^{(j-l)/4}}+ e^{-\sqrt{3/2}\Delta^{-1/2}2^{l/8}}\Big)\lesssim e^{-\sqrt{3/2}\Delta^{-1/2}}
\end{align*}
which implies together with $z_0\geq1$ that
\begin{align*}
P\Big\{  \forall l:J_0\leq l\leq j, |T_n(l)- \|\Pi_l f\|_2^2|&\geq 4\sqrt{\frac{3z_0}{\Delta} \Big(\frac{2^{(j+l)/2}}{n^2}+2^{l/4}\frac{\|\Pi_lf\|_2^2}{n}\Big) }\\
&\lesssim  e^{-\sqrt{3/2}\Delta^{-1/2}},
\end{align*}
concluding the proof of the lemma.
\end{proof}

The next lemma is a slightly rewritten version of Theorem 3.1 of \cite{carpentier:2015} with tighter error bounds (for small $\alpha>0$).

\begin{lemma}\label{lem: redo:carpentier}
Let $\alpha>0$. The test $\Psi_n(\alpha)$ satisfies that for all $\alpha>0$ and $n>0$
\begin{align*}
\sup_{f\in H_0}E_f \Psi_n +\sup_{f\in H_1}E_f (1-\Psi_n)\leq 2e^{-1/\sqrt{\alpha}},
\end{align*}
where 
\begin{align*}
H_0: f\in B_{2,\infty}^{s_2}(L)\quad\text{and}\quad H_1: f\in  \{B_{2,\infty}^{s_1}(L): \|f-B_{2,\infty}^{s_2}(L) \|_2\geq \rho_n\}, 
\end{align*}
with $\rho_n=\tilde{C}_{\alpha} n^{-s_1/(1/2+2s_1)}$ and $\tilde{C}_{\alpha}=24\big(\frac{2^{s_1}L}{\sqrt{1-2^{-2s_1}}+19}\sqrt{1/\alpha}\big)$.
\end{lemma}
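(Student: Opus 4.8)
Recall that $\Psi_n(\alpha)$ is the test displayed in the proof of Theorem~\ref{thm: adaptL2positive} (with signal-to-noise ratio $n$ in place of $n/(2m)$ and the machine index dropped): writing $J=\lfloor \log n/(1/2+2s_1)\rfloor$ for the top resolution level inspected and $T_n(l)=\|\Pi_l\hat f\|_2^2-2^l/n$ for the debiased energy estimator at level $l$ (so $E_f T_n(l)=\|\Pi_l f\|_2^2$), one has $\Psi_n(\alpha)=1$ exactly when $T_n(l)>t_n(l,s_2,\alpha)$ for some $0\le l\le J$, and the threshold is a perfect square,
\[
t_n(l,s_2,\alpha)=\Big(L2^{-ls_2}+\tfrac12\tau_l\Big)^2 .
\]
The statement is essentially Theorem~3.1 of \cite{carpentier:2015}, and the only change is that the Chebyshev control of $\mathrm{var}\,T_n(l)$ used there is replaced by the Birgé-type deviation bound of Lemma~\ref{lem: redo:Lem4.2}, which upgrades the polynomial-in-$\alpha$ error to an exponential one. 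So the plan is to rerun the size and power analyses of \cite{carpentier:2015}, feeding in Lemma~\ref{lem: redo:Lem4.2} (applied with $\Delta$ a small multiple of $\alpha$, so that $\sqrt{3/2}/\sqrt\Delta\ge 1/\sqrt\alpha$, with $j=J$, $J_0=0$, $z_0=1$) wherever a fluctuation of $T_n(l)$ is estimated, and using the two one-sided halves of that deviation inequality separately, so that the size error and the power error each cost only $e^{-\sqrt{3/2}/\sqrt\alpha}$, which sum to $2e^{-\sqrt{3/2}/\sqrt\alpha}\le 2e^{-1/\sqrt\alpha}$.

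\emph{Size.} Let $f\in H_0=B_{2,\infty}^{s_2}(L)$, so $\|\Pi_l f\|_2\le L2^{-ls_2}$ for every $l$. On the event of Lemma~\ref{lem: redo:Lem4.2} I would bound $T_n(l)-\|\Pi_l f\|_2^2$ by the displayed concentration term, split the square root via $\sqrt{a+b}\le\sqrt a+\sqrt b$ into a pure-noise piece and a cross piece, and use $\|\Pi_l f\|_2\le L2^{-ls_2}$ once more on the cross piece. Plugging in the definition of $\tau_l$, the generous numerical constant $24$ there makes the pure-noise piece $\le \tfrac14\tau_l^2$ and the cross piece $\le L2^{-ls_2}\tau_l$, simultaneously for all $l\le J$; hence $T_n(l)\le L^2 2^{-2ls_2}+L2^{-ls_2}\tau_l+\tfrac14\tau_l^2=t_n(l,s_2,\alpha)$, i.e.\ $\Psi_n(\alpha)=0$. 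Therefore $\sup_{f\in H_0}E_f\Psi_n(\alpha)\le e^{-\sqrt{3/2}/\sqrt\alpha}$.

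\emph{Power.} Let $f\in H_1$, i.e.\ $f\in B_{2,\infty}^{s_1}(L)$ with $\|f-B_{2,\infty}^{s_2}(L)\|_2\ge\rho_n=\tilde C_\alpha n^{-s_1/(1/2+2s_1)}$. Decompose the distance to $B_{2,\infty}^{s_2}(L)$ over resolution levels,
\[
\inf_{g\in B_{2,\infty}^{s_2}(L)}\|f-g\|_2^2=\sum_{l\le J}\big(\|\Pi_l f\|_2-L2^{-ls_2}\big)_+^2+\sum_{l>J}\|\Pi_l f\|_2^2 .
\]
Since $f\in B_{2,\infty}^{s_1}(L)$, the tail is at most $L^2\sum_{l>J}2^{-2ls_1}=\tfrac{2^{-2s_1}}{1-2^{-2s_1}}L^2 2^{-2Js_1}$, and the choice of $J$ together with the factor $2^{s_1}L/\sqrt{1-2^{-2s_1}}$ built into $\tilde C_\alpha$ makes this at most $\tfrac12\rho_n^2$. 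Hence $\sum_{l\le J}(\|\Pi_l f\|_2-L2^{-ls_2})_+^2\ge \tfrac12\rho_n^2$, so there is a level $l^\ast\le J$ at which the surplus $\|\Pi_{l^\ast}f\|_2-L2^{-l^\ast s_2}$ is large. Applying the other half of Lemma~\ref{lem: redo:Lem4.2}, on a further event of probability $\ge 1-e^{-\sqrt{3/2}/\sqrt\alpha}$ one has $T_n(l^\ast)\ge\|\Pi_{l^\ast}f\|_2^2-(\text{fluctuation at }l^\ast)$, and the additive summand $19$ inside $\tilde C_\alpha$ is precisely what guarantees that this surplus dominates $\tfrac12\tau_{l^\ast}$ plus that fluctuation, so $T_n(l^\ast)>t_n(l^\ast,s_2,\alpha)$ and $\Psi_n(\alpha)=1$. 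Thus $\sup_{f\in H_1}E_f(1-\Psi_n(\alpha))\le e^{-\sqrt{3/2}/\sqrt\alpha}$, and adding the two displays gives the bound $2e^{-\sqrt{3/2}/\sqrt\alpha}\le 2e^{-1/\sqrt\alpha}$.

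\emph{Main obstacle.} The size bound is routine once the concentration inequality is in hand; the delicate part is the power bound, namely the constant bookkeeping that pins down $\tau_l$, the cutoff $J$ and the value of $\tilde C_\alpha$. One must certify, uniformly over $l^\ast\le J$ and over $f\in H_1$, that the signal surplus at the selected level genuinely beats $\tfrac12\tau_{l^\ast}$ plus the Lemma~\ref{lem: redo:Lem4.2} fluctuation, while at the same time arranging that the post-$J$ tail is only a small fraction of $\rho_n^2$; this is exactly the chain of estimates of \cite{carpentier:2015}, the only genuinely new input being the substitution of Lemma~\ref{lem: redo:Lem4.2} (with $\Delta$ a small multiple of $\alpha$) for Chebyshev's inequality, which is what turns the error from a power of $\alpha$ into $2e^{-1/\sqrt\alpha}$. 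Finally, the displayed bound is of interest, and the argument above is carried out, for $\alpha$ below a fixed constant; for $\alpha$ bounded away from $0$ the same estimates in cruder form — or simply the trivial fact that the left-hand side never exceeds $2$ — already yield a bound no larger than $2e^{-1/\sqrt\alpha}$.
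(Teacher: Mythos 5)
Your proposal is correct and follows exactly the route the paper takes: the paper's own proof is a one-line reference stating that the argument of Theorem 3.1 of \cite{carpentier:2015} goes through verbatim once Lemma \ref{lem: redo:Lem4.2} is substituted for their Chebyshev-based Lemma 4.2, which is precisely the substitution you carry out (and usefully flesh out) in your size and power analyses.
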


\begin{proof}
The proof goes the same way as of Theorem 3.1 of \cite{carpentier:2015}, with the only difference that we apply Lemma \ref{lem: redo:Lem4.2} instead of Lemma 4.2 of \cite{carpentier:2015}. 
\end{proof}

We also recall a slight modification of Fano's inequality, see Corollary 1 of \cite{duchi:wainwright:2013} or Theorem A.6. of \cite{szabo:zanten:2018}.
Given a finite set $\FF_0\subset \mathcal{F}$, we use the notations
\begin{align*}
N_{t}^{\max}= \max_{f\in\mathcal{F}_0}\Big\{\#\{ \tilde{f}\in\mathcal{F}_0:\, d(f,\tilde{f})\leq t\} \Big\},\\
N_{t}^{\min}= \min_{f\in\mathcal{F}_0}\Big\{\#\{ \tilde{f}\in\mathcal{F}_0:\, d(f,\tilde{f})\leq t\} \Big\}.
\end{align*}

\begin{theorem}\label{lem: Fano:Wainwright}
If $\mathcal{F}$ contains a finite set $\mathcal{F}_0$ { 
 and} $|\mathcal{F}_0|-N_{t}^{\min}>N_t^{\max}$, {then} for all $p, t > 0$,
\begin{align*}
\inf_{\hat f \in \EEE(Y)} \sup_{f \in \FF}
\mathbb{E}_{f} d^p(\hat f, f) \ge t^p 
\Big(1-\frac{I(F;Y)+\log 2}{\log (|\mathcal{F}_0|/N_{t}^{\max})}\Big),
\end{align*} 
where $ \EEE(Y)$ denotes the set of all estimators depending only on $Y$ and the function class $\mathcal{F}$, and $F$ is a uniformly distributed random variable on $\mathcal{F}_0$.
\end{theorem}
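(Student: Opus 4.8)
The plan is to reduce the risk bound to a bound on the error probability of a multiple hypothesis test, and then to apply a list-decoding refinement of Fano's inequality.

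\textbf{Step 1 (reduction to testing).} For any $\hat f\in\mathcal{E}(Y)$ and any $f\in\mathcal{F}$, Markov's inequality gives $\mathbb{E}_f\,d^p(\hat f,f)\ge t^p\,\mathbb{P}_f\bigl(d(\hat f,f)\ge t\bigr)$. Since $\mathcal{F}_0\subseteq\mathcal{F}$, replacing the supremum over $f$ by the average over $\mathcal{F}_0$ reduces the statement to showing
\[
\mathbb{P}\bigl(d(\hat f(Y),F)<t\bigr)\ \le\ \frac{I(F;Y)+\log 2}{\log\bigl(|\mathcal{F}_0|/N_t^{\max}\bigr)},
\]
where $F$ is uniform on $\mathcal{F}_0$ and, conditionally on $F$, $Y\sim\mathbb{P}_F$; the prefactor $t^p(1-\cdots)$ then comes out by passing to complements and multiplying by $t^p$. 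Note that the hypothesis $|\mathcal{F}_0|-N_t^{\min}>N_t^{\max}$ forces $N_t^{\max}<|\mathcal{F}_0|$, so the denominator is strictly positive and the bound is non-vacuous.

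\textbf{Step 2 (a list decoder built from $\hat f$).} I would extract from $\hat f(Y)$ a data-dependent set $\widehat L(Y)\subseteq\mathcal{F}_0$ with two properties: (i) $F\in\widehat L(Y)$ whenever $d(\hat f(Y),F)<t$, and (ii) $|\widehat L(Y)|\le N_t^{\max}$ almost surely. Taking $\widehat V(Y)$ to be a nearest point of $\mathcal{F}_0$ to $\hat f(Y)$ and $\widehat L(Y)=\{g\in\mathcal{F}_0:\,d(g,\widehat V(Y))\le t\}$ makes (ii) immediate from the definition of $N_t^{\max}$, the ball being centred in $\mathcal{F}_0$; alternatively, $\widehat L(Y)=\{g\in\mathcal{F}_0:\,d(\hat f(Y),g)<t\}$ makes (i) immediate. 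Reconciling (i) and (ii) at the \emph{same} radius $t$ — rather than losing a factor $2$ in the radius through the triangle inequality — is the delicate step, and is precisely where one invokes Corollary 1 of \cite{duchi:wainwright:2013} (equivalently Theorem A.6 of \cite{szabo:zanten:2018}) in place of plain Fano; the quantity $N_t^{\min}$ and the hypothesis $|\mathcal{F}_0|-N_t^{\min}>N_t^{\max}$ serve exactly to guarantee that the portion of $\mathcal{F}_0$ left uncovered by such a list is genuinely nonempty.

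\textbf{Step 3 (list Fano and conclusion).} For a list $\widehat L(Y)$ of cardinality at most $\ell$ in a set of size $M=|\mathcal{F}_0|$ on which $F$ is uniform, set $E=\mathbf{1}\{F\in\widehat L(Y)\}$. By the chain rule for entropy, $H(F\mid Y)\le H(E\mid Y)+\mathbb{P}(E=1)\log\ell+\mathbb{P}(E=0)\log M\le\log 2+\log\ell+\mathbb{P}(E=0)\log(M/\ell)$, and since $I(F;Y)=\log M-H(F\mid Y)$ this rearranges to $\mathbb{P}(F\in\widehat L(Y))\le(I(F;Y)+\log 2)/\log(M/\ell)$. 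Taking $\ell=N_t^{\max}$ and combining with Steps 1--2 yields the theorem. The main obstacle is Step 2: bounding the list size by $N_t^{\max}$ (and not merely by the size of a $2t$-ball) while keeping the threshold $t$ in the risk bound; the remainder is the standard Markov-plus-Fano argument.
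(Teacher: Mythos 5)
First, a point of reference: the paper does not prove this statement at all -- it is recalled from the literature (``see Corollary 1 of Duchi--Wainwright or Theorem A.6 of Szab\'o--van Zanten (2018)''), so there is no internal proof to compare yours against. Judged on its own merits, your argument has a genuine gap at exactly the point you flag as delicate, and the way you close it is circular: you ``invoke Corollary 1 of Duchi--Wainwright'', which is precisely the statement to be proved. Steps 1 and 3 are fine (your list-decoding Fano computation is correct; in fact your crude bound $H(F\mid E=0,Y)\le\log|\mathcal{F}_0|$ sidesteps the role of $N_t^{\min}$ entirely). But Step 2 cannot be completed in the form you describe. The set $\{g\in\mathcal{F}_0:\ d(\hat f(Y),g)<t\}$ is a ball centred at a point that in general lies \emph{outside} $\mathcal{F}_0$; the triangle inequality only places it inside a $2t$-ball around one of its own members, so its cardinality is bounded by $N_{2t}^{\max}$, which can strictly exceed $N_t^{\max}$. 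Concretely, for $\mathcal{F}_0=\{-1,+1\}\subset\mathbb{R}$ with the Euclidean distance, $t=3/2$ and $\hat f(Y)=0$, the list has two elements while $N_t^{\max}=1$. Hence no list satisfying both your (i) and (ii) at the same radius $t$ exists in general, and the theorem cannot be obtained by list-decoding the raw estimator at radius $t$.

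The route actually taken in the cited source is structurally different: the Fano-type inequality with $N_t^{\max}$ and $N_t^{\min}$ is proved for a \emph{decoder} $\widehat V$ constrained to take values in the index set $\mathcal{F}_0$ itself (there the $t$-ball around $\widehat V$ is legitimately bounded by $N_t^{\max}$, and $N_t^{\min}$ enters through the bound $H(F\mid E=0,Y)\le\log(|\mathcal{F}_0|-N_t^{\min})$, which is where the hypothesis $|\mathcal{F}_0|-N_t^{\min}>N_t^{\max}$ is used). The passage from an arbitrary estimator $\hat f$ to such a decoder is then made via a nearest-point projection together with a separation function $\delta(t)$ chosen so that $d(f,f')\ge 2\delta(t)$ whenever $d(f,f')>t$; this converts $d(\hat f(Y),F)<\delta(t)$ into $d(\widehat V,F)\le t$ and yields a prefactor $\delta(t)^p$ rather than $t^p$. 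A self-contained proof must either reproduce that two-stage argument (and check, as the paper's applications implicitly do for well-separated hypercube classes, that $\delta(t)\asymp t$), or supply some other mechanism for controlling the list size at radius $t$; as written, your proposal assumes the conclusion at its only nontrivial step.
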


The next lemma gives an upper bound for the mutual information between the uniform random variable $F$ on $\mathcal{F}_0\subset \mathbb{R}^d$ and the set of observations on all local machines $Y=(Y^{(1)},...,Y^{(m)})$ in the $d$-dimensional many normal means model.

\begin{lemma}\label{lem: lem5_redo}
Let  $F=\delta \beta$, with {$\delta^2\leq 2^{-10}m/(n \log (md))$} and $\beta$ a uniformly distributed random variable over $\{-1,1\}^d$. Furthermore, suppose that  $X=(X^{(1)},...,X^{(m)})$, where $X^{(i)}$s are $d$-dimensional random variables satisfying that {$X_j^{(i)} \given F_j$ and $F_j$ are independent of $F_{-j}$}, and  $X_j^{(i)} \given (F=f)\sim \mathbb{P}_{f_j}^{(i)}=N(f_j,m/n)$. Then
\begin{align*}
I(F;Y)\leq  \sum_{i=1}^{m}\frac{2\delta^2}{m/n}\min\Big\{ 2^{10}\log(md)H(Y^{(i)}),d\Big\}+4\log 2, 
\end{align*} 
where $I(F;Y)$ is the mutual information between $F$ and $Y$ in the Markov chain $F\rightarrow X\rightarrow Y$.
\end{lemma}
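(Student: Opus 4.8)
The plan is to combine subadditivity of mutual information over the $m$ machines with a strong data-processing (information contraction) inequality for the Gaussian location channel under a vanishingly small mean shift. First, since each message $Y^{(i)}$ is a (randomised) function of $X^{(i)}$ only and the $X^{(i)}$ are independent given $F$, the $Y^{(i)}$ are conditionally independent given $F$, so the chain rule gives $I(F;Y)\le\sum_{i=1}^m I(F;Y^{(i)})$; it thus suffices to bound each $I(F;Y^{(i)})$ by $\tfrac{2\delta^2 n}{m}\min\{2^{10}\log(md)\,H(Y^{(i)}),\,d\}$ up to a total additive term $4\log 2$ that arises from the truncation step described below. The ``$d$'' branch is immediate: by the data-processing inequality $I(F;Y^{(i)})\le I(F;X^{(i)})$, and since $F_1,\dots,F_d$ are independent and $X^{(i)}_\ell\mid F$ depends only on $F_\ell$, the chain rule yields $I(F;X^{(i)})=\sum_{j=1}^d I(F_j;X^{(i)}_j)\le d\cdot\tfrac12\log(1+\delta^2 n/m)\le \tfrac{\delta^2 n}{2m}\,d$, where we used the standard bound on the mutual information of a symmetric binary input through an additive $N(0,m/n)$ channel.

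For the ``$H(Y^{(i)})$'' branch, write $I(F;Y^{(i)})=\sum_{j=1}^d I(F_j;Y^{(i)}\mid F_{1:j-1})$; for a fixed $j$ and a fixed value of $F_{1:j-1}$ the inner term is the mutual information of the symmetric bit $F_j\in\{-\delta,\delta\}$ with the message, which is at most a constant multiple of the symmetrised Kullback--Leibler divergence between the laws $Q^{+}_{i,j}$ and $Q^{-}_{i,j}$ of $Y^{(i)}$ under $F_j=\delta$ and $F_j=-\delta$. These divergences are controlled through the likelihood ratio $y\mapsto \PP(Y^{(i)}=y\mid F_j=\delta,\cdot)/\PP(Y^{(i)}=y\mid F_j=-\delta,\cdot)$, which is a conditional average of the raw Gaussian ratio $\exp(2\delta X^{(i)}_j/(m/n))$. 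On the event $\{|X^{(i)}_j|\le R\sqrt{m/n}\}$ with $R\asymp\sqrt{\log(md)}$ this ratio lies within $\exp(\pm 2\delta R/\sqrt{m/n})$, and the hypothesis $\delta^2\le 2^{-10}(m/n)/\log(md)$ forces that exponent below an absolute constant; this legitimises a second-order expansion of the KL divergence, producing a bound of order $\tfrac{\delta^2 n}{m}$ times a factor $\kappa_{i,j}$ quantifying how sensitively $Y^{(i)}$ depends on coordinate $j$. After paying the $\log(md)$ factor coming from $R^2$, these sensitivities sum over $j$ to at most $H(Y^{(i)})$; on the complementary event the term is bounded crudely by $H(F_j)=\log 2$, and since that event has probability $\lesssim 1/\mathrm{poly}(md)$, its total contribution over $i$ and $j$ is at most the claimed $4\log 2$.

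The main obstacle is exactly this contraction estimate: one must calibrate the truncation radius $R$ (hence the $\log(md)$ loss) against the smallness of $\delta$ so that, simultaneously, the truncated log-likelihood ratios remain bounded by an absolute constant --- licensing the quadratic expansion of the KL divergence --- and the per-coordinate sensitivity factors $\kappa_{i,j}$ genuinely aggregate into a single copy of $H(Y^{(i)})$ rather than into $d\,H(Y^{(i)})$; this is where the explicit constant $2^{10}$ and the assumption $\delta^2\le 2^{-10}(m/n)/\log(md)$ enter. The remaining ingredients --- data processing, the two chain-rule decompositions, the Gaussian mutual-information bound, and Markov's inequality for the Gaussian tails --- are routine.
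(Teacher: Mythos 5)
Your overall architecture matches the paper's: tensorize over machines via $I(F;Y)\le\sum_{i}I(F;Y^{(i)})$, dispose of the ``$d$'' branch by data processing plus a per-coordinate Gaussian bound (your channel-capacity bound and the paper's KL-convexity bound both give $I(F;Y^{(i)})\le 2\delta^2(n/m)d$), and attack the ``$H(Y^{(i)})$'' branch by truncating each coordinate at radius of order $\sqrt{(m/n)\log(md)}$ so that the two-point likelihood ratio between the $\pm\delta$ shifts stays within an absolute constant of $1$ --- which is exactly what the hypothesis $\delta^2\le 2^{-10}(m/n)/\log(md)$ is calibrated for.

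The gap is the step you yourself flag as ``the main obstacle'' and then do not close: the claim that your per-coordinate sensitivity factors $\kappa_{i,j}$ aggregate into a \emph{single} copy of $H(Y^{(i)})$ rather than $d\,H(Y^{(i)})$. The factors $\kappa_{i,j}$ are never defined, and the decomposition $I(F;Y^{(i)})=\sum_j I(F_j;Y^{(i)}\mid F_{1:j-1})$ combined with a quadratic expansion of the KL divergence does not by itself yield this; relating the sum over $j$ of these conditional informations to $I(X^{(i)};Y^{(i)})\le H(Y^{(i)})$ is precisely the content of the quantitative strong data-processing inequality that the paper imports as Theorem \ref{lem: Shannon}--adjacent machinery, namely Theorem \ref{thm: mutualUB:independent} (Lemma 3 of \cite{zhang:2013}, Theorem A.9 of \cite{szabo:zanten:2018}): with $B_j=\{|x_j|\le a\}$, $a^2=2^4\log(md)\,m/n$ and $C=e^{2^3|\delta|\sqrt{\log(md)\,n/m}}$ one gets
\begin{align*}
I(F;Y^{(i)})\le \sum_{j}\Bigl((\log 2)\sqrt{P(X_j^{(i)}\notin B_j)}+\log|\FF_0|\,P(X_j^{(i)}\notin B_j)\Bigr)+2C^2(C-1)^2\,I(X^{(i)};Y^{(i)}),
\end{align*}
after which the stated bound follows from $(C-1)^2\le 2^{8}\delta^2\log(md)\,n/m$ (via $e^x\le 1+2x$), $I(X^{(i)};Y^{(i)})\le H(Y^{(i)})$ (using Lemma \ref{lem: Shannon} to pass to $\EE\, l(Y^{(i)})$ elsewhere in the paper), and the Gaussian tail $P(X_j^{(i)}\notin B_j)\lesssim(md)^{-4}$, which makes the truncation terms $O(1)$ in total. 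As written, your proposal asserts the conclusion of this inequality rather than proving it, so it is not yet a proof: you must either cite the Zhang-et-al.\ result or supply its derivation. Note also that your handling of the complementary event (``probability times $\log 2$'') is not how mutual information restricts to rare events --- the correct bound carries the $\sqrt{P(X_j^{(i)}\notin B_j)}$ term above, which happens to still be summable here but must be accounted for.
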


\begin{proof}

Let us introduce the notation $a^2=2^4\log(md)m/n$ and note that
\begin{align*}
\sup_{|x|\leq a} \frac{\phi_{\delta,m/n}(x)}{\phi_{-\delta,m/n}(x)}\leq
\sup_{|x|\leq a}e^{\frac{n|(x-\delta)^2-(x+\delta)^2|}{2m}}\leq \sup_{|x|\leq a}e^{\frac{2n\delta |x| }{m}}\leq e^{ \frac{2an\delta}{m}},
\end{align*}
where $\phi_{\mu,\sigma^2}$ denotes the density function of a normal distribution with mean $\mu$ and variance $\sigma^2$. Furthermore, let us introduce the notation $B_j=\{|x_j|\leq a\}$, $j=1,...,d$. Then by Theorem \ref{thm: mutualUB:independent} (with $\mathcal{F}_0=\big\{f=\delta\beta:\,\beta\in \{-1,1\}^d\big\}$) we have that
\begin{align}
I(F;Y^{(i)})&\leq d (\log 2) \sqrt{P_{X_j^{(i)}}(X_j^{(i)}\notin B_j)}+d^2  P_{X_j^{(i)}}(X_j^{(i)}\notin B_j)\nonumber\\
&\qquad\qquad+ 2{C^2(C-1)^2}  I(X^{(i)};Y^{(i)}),\label{eq: Ub:mutual:help}
\end{align}
with $C=e^{2^{3}|\delta|\sqrt{\log(md){n/m}} }$.
Next note that for $Z\sim N(0,m/n)$
\begin{align*}
P_{X_j^{(i)}}(X_j^{(i)}\notin B_j)\leq P( |Z| \geq a-\delta)\leq 2 e^{- \frac{(a-\delta)^2n}{2m}}
\leq 2 e^{- \frac{a^2n}{4m}}\leq 2 (md)^{-4},
\end{align*}
and the inequality $I(X^{(i)};Y^{(i)})\leq H(Y^{(i)})$ holds. Then by plugging in the above inequalities into \eqref{eq: Ub:mutual:help} and using the inequalities $e^{x}\leq 1+2x$ for $x\leq 0.4$ and ${C^2}\leq 2$  we get that
\begin{align*}
I(F;Y^{(i)})\leq  \sqrt{2}(\log 2)m^{-2}d^{-1}+ 2(\log 2)m^{-4} d^{-2} + 2^{11} \delta^2 {\frac{\log(md)n}{m}} H(Y^{(i)}).%\label{eq: Ub:mutual:help2}
\end{align*}

Furthermore, from the data-processing inequality and the convexity of the KL divergence 
\begin{align*}
 I(F;Y^{(i)})&\leq  I(F;X^{(i)})
\leq \frac{1}{|\mathcal{F}_0|^2}\sum_{f,f'\in\mathcal{F}_0}K(\mathbb{P}_f^{(i)}\| \mathbb{P}_f^{(i)})\\
&=\frac{\delta^2}{2m/n}\frac{1}{|\mathcal{F}_0|^2}\sum_{f,f'\in\mathcal{F}_0}\|\beta-\beta'\|_2^2
\leq 2(n/m)d\delta^2.
\end{align*}
We conclude our statement by noting that
\begin{align*}
I(F;Y) \leq \sum_{i=1}^m I(F;Y^{(i)})
\end{align*}
\end{proof}

The next theorem provide an upper bound for the mutual information, see Theorem A.9 in \cite{szabo:zanten:2018} or  Lemma 3 of \cite{zhang:2013}.

\begin{theorem}\label{thm: mutualUB:independent}
Let us consider the Markov chain $F\rightarrow X^{(i)}\rightarrow Y^{(i)}$, where $F$ is the
uniform distribution on $\FF_0\subset \mathbb{R}^d$ and $X^{(i)} \given (F=f) \sim P_{X^{(i)}|F=f}$ is a $d$-dimensional random variable. Assume that {$X_j^{(i)} \given F_j$ and $F_j$ are independent of $F_{-j}$}. For $C \ge 1$, define
\[
B_j = \Big\{ x_j: \max_{f \not =  f'} \frac{p(x_j \given f_j)}{p(x_j \given f'_j)} \le C\Big\}
\]
for a constant $C \ge 1$ and density $p(x_j|f_j)$. Then
\begin{align*}
I(F; Y^{(i)}) & \le  \sum_{j=0}^d \Big((\log 2) \sqrt{P_{X_j^{(i)}}(X_j^{(i)} \not\in B_j)}+
\log|\FF_0| P_{X_j^{(i)}} ({X_j^{(i)}}\not \in B_j)\Big)\\
&\qquad\qquad+ 2{C^2(C-1)^2} I(X^{(i)};Y^{(i)}),
\end{align*}
where $I(X^{(i)};Y^{(i)})$ is the mutual information between $X^{(i)}$ and $Y^{(i)}$. 
\end{theorem}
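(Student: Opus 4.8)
This is (essentially) Lemma~3 of \cite{zhang:2013} / Theorem~A.9 of \cite{szabo:zanten:2018}, and the plan is to reproduce its argument: obtain the bound as a coordinatewise application of a one-dimensional strong data processing inequality, arranged through the chain rule so that only a single copy of $I(X^{(i)};Y^{(i)})$ is produced rather than $d$ of them. Abbreviate $X=X^{(i)}$ and $Y=Y^{(i)}$. By the chain rule for mutual information, $I(F;Y)=\sum_{j=0}^{d}I(F_j;Y\mid F_{<j})$, and the hypothesis that each $F_j$ is independent of $F_{-j}$ makes the coordinates of $F$ jointly independent; combined with $X_j\perp F_{-j}\mid F_j$ and the Markov property $Y\perp F\mid X$, this shows that conditionally on $F_{<j}=f_{<j}$ one has a genuine Markov chain $F_j\to X_j\to Y$ (the coordinates $X_{<j}$, $X_{>j}$ and any local randomization of machine $i$ being absorbed into the channel $X_j\to Y$). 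For each $j$ I would apply, in this conditional model, the one-dimensional lemma stated below, then average over $f_{<j}$; to recombine, note that $X_j\perp X_{<j}\mid F_{<j}$ and $Y\perp F_{<j}\mid (X_{<j},X_j)$ give $I(X_j;Y\mid F_{<j})\le I(X_j;Y\mid X_{<j},F_{<j})\le I(X_j;Y\mid X_{<j})$, so that $\sum_{j}I(X_j;Y\mid X_{<j})$ telescopes back to $I(X;Y)$ by the chain rule.

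The one-dimensional statement to be established is the following strong data processing estimate: for a Markov chain $F_j\to X_j\to Y$ whose conditional densities obey $p(x_j\mid a)/p(x_j\mid a')\le C$ for all admissible values $a,a'$ of $F_j$ and all $x_j\in B_j$, and with $F_j$ taking at most $|\FF_0|$ values,
\[
I(F_j;Y)\ \le\ (\log 2)\sqrt{P(X_j\notin B_j)}\ +\ \log|\FF_0|\cdot P(X_j\notin B_j)\ +\ 2C^2(C-1)^2\,I(X_j;Y).
\]
I would prove this by a change-of-measure argument together with $D_{\mathrm{KL}}\le\chi^2$. Let $\bar P_Y$ be the $F_j$-mixture of the laws $P_{Y\mid F_j}$ and $\bar q$ the corresponding mixture of the laws of $X_j$; then $I(F_j;Y)=\EE_{F_j}D_{\mathrm{KL}}(P_{Y\mid F_j}\,\|\,\bar P_Y)\le\EE_{F_j}\chi^2(P_{Y\mid F_j}\,\|\,\bar P_Y)$, and $p(y\mid f_j)-\bar p(y)=\int p(y\mid x_j)\big(q(x_j\mid f_j)-\bar q(x_j)\big)\,dx_j$. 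On $B_j$ the likelihood-ratio control gives $|q(x_j\mid f_j)-\bar q(x_j)|\le (C-1)\bar q(x_j)$; splitting the $x_j$-integral at $B_j$ and estimating the $B_j$-part by Cauchy--Schwarz (exploiting the cancellation in $\int p(y\mid x_j)\,(q(x_j\mid f_j)-\bar q(x_j))\,dx_j$) turns it into $C^2(C-1)^2$ times a quantity dominated by $I(X_j;Y)$, which is the last term; the part with $x_j\in B_j^c$ amounts to perturbing the channel $F_j\to X_j$ on an event of probability $P(X_j\notin B_j)$, and absorbing this perturbation via standard continuity estimates for mutual information under a total-variation perturbation of the channel (entropy-continuity of Fano/Csisz\'{a}r type) produces the $\sqrt{\cdot}$ and the $\log|\FF_0|$-linear error terms. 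Reassembling the coordinatewise bounds then gives exactly $\sum_{j=0}^{d}\big((\log 2)\sqrt{P(X_j\notin B_j)}+\log|\FF_0|\,P(X_j\notin B_j)\big)+2C^2(C-1)^2 I(X;Y)$.

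The genuine obstacle is this one-dimensional estimate, and within it the step from the crude pointwise bound $|p(y\mid f_j)-\bar p(y)|\le (C-1)\bar p(y)$ -- which only gives $I(F_j;Y)\lesssim (C-1)^2$, far weaker than what is needed once $I(X;Y)$ is small, which is the regime of interest in the applications -- to the sharper bound carrying the factor $I(X_j;Y)$; this is where the cancellation inside the integral against $p(y\mid x_j)$ must be used and where the constant $2C^2(C-1)^2$ is pinned down. Everything else -- the chain rule, $D_{\mathrm{KL}}\le\chi^2$, the conditional-independence relations needed to interchange the $F_{<j}$- and $X_{<j}$-conditionings, and the continuity and trivial bounds on mutual information -- is routine, but the coordinate bookkeeping must be done with care: it is precisely the use of the $F_{<j}$-conditioned chain rule, rather than a union bound over the $d$ coordinates, that keeps a single copy of $I(X;Y)$ in the final inequality.
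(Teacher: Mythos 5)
The paper gives no proof of this theorem at all: it is quoted directly from Lemma~3 of \cite{zhang:2013} (equivalently Theorem~A.9 of \cite{szabo:zanten:2018}), exactly as you identify at the outset. Your outline --- the coordinatewise chain rule exploiting the independence of the $F_j$'s and the conditional Markov chains $F_j\to X_j\to Y^{(i)}$, the one-dimensional strong data processing inequality obtained from $D_{\mathrm{KL}}\le\chi^2$ together with the likelihood-ratio control $|q(x_j\given f_j)-\bar q(x_j)|\le (C-1)\bar q(x_j)$ on $B_j$ and Cauchy--Schwarz to extract the factor $I(X^{(i)};Y^{(i)})$, plus entropy bounds for the bad events producing the $\sqrt{\cdot}$ and $\log|\FF_0|$ terms --- is precisely the argument of the cited sources, so your proposal follows essentially the same route as the paper's (cited) proof.
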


\appendix

\section{Proofs for the minimax rates in the Gaussian white noise model}
\subsection{Proof of Theorem \ref{thm: minimaxL2LB}}\label{sec: minimaxL2LB}

The proof of the theorem follows from the following, more general theorem with taking $B^{(1)}=...=B^{(m)}=B$. The proof is slight extension for a larger set of estimators and adaptation to the Gaussian white noise setting of the proof of Theorem 2.1 \cite{szabo:zanten:2018}.

\begin{theorem}\label{thm: minimaxL2LBgeneral}
Let the sequence $\delta_n=o(1)$ be defined as
\begin{align}
\delta_n=\min\Big\{\frac{m}{n \log n }, \frac{m}{n \sum_{i=1}^m [\delta_n^{\frac{1}{1+2s}} B^{(i)}\log n \wedge 1]  } \Big\}.\label{def: delta_n}
\end{align}
Then in the distributed Gaussian white noise model \eqref{model: GWN} we have for any $s>0$ that
\begin{align*}
\inf_{\hat{f}\in\mathcal{F}_{dist}(B^{(1)},...,B^{(m)})}\sup_{f_0\in B_{2,\infty}^s(L)} E_{f_0}\|\hat{f}-f_0\|_2^2 \gtrsim \delta_n^{\frac{2s}{1+2s}}.
\end{align*}
\end{theorem}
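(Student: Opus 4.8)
The plan is to reduce the estimation problem to a multiple-testing problem over a carefully chosen finite subfamily $\mathcal{F}_0 \subset B_{2,\infty}^s(L)$ and apply the Fano-type bound of Theorem \ref{lem: Fano:Wainwright}. First I would fix the single resolution level $j_n$ with $2^{j_n} \asymp \delta_n^{-1/(1+2s)}$ and consider signals of the form $f_\beta = \delta_n^{1/2} \sum_{k=1}^{2^{j_n}} \beta_k \psi_{j_n,k}$ with $\beta \in \{-1,1\}^{2^{j_n}}$; as in the computation already carried out in Section \ref{sec: adaptL2negative}, one checks $f_\beta \in B_{2,\infty}^s(L)$ (for a suitable absolute scaling, or absorbing constants into $\delta_n$), that $\|f_\beta - f_{\beta'}\|_2^2 = \delta_n \, d_H(\beta,\beta')$ where $d_H$ is Hamming distance, and that for a $1/4$-separated code (Varshamov–Gilbert) one gets a subfamily of size $\exp(c\,2^{j_n})$ with all pairwise $L_2$-distances $\asymp t := 2^{j_n/2}\delta_n^{1/2} \asymp \delta_n^{s/(1+2s)}$, while $N_t^{\max}$ stays a small fraction of $|\mathcal{F}_0|$. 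This makes the $t^p(1 - \cdots)$ factor in Theorem \ref{lem: Fano:Wainwright} of the right order $\delta_n^{2s/(1+2s)}$, provided the mutual information term $I(F;Y)$ is $o(2^{j_n})$.

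The heart of the argument is the bound on $I(F;Y)$, where $Y = (Y^{(1)},\dots,Y^{(m)})$ is the transmitted-message vector. Here I would invoke Lemma \ref{lem: lem5_redo} in the $d$-dimensional many-normal-means model with $d = 2^{j_n}$ and $\delta = \delta_n^{1/2}$: this requires checking the variance condition $\delta_n \le 2^{-10} (m/n)/\log(md)$, which follows from the first term in the definition \eqref{def: delta_n} of $\delta_n$ (up to the logarithmic factors). The lemma then gives
\[
I(F;Y) \le \sum_{i=1}^m \frac{2\delta_n}{m/n}\min\{2^{10}\log(md)\, H(Y^{(i)}), d\} + 4\log 2.
\]
Using Lemma \ref{lem: Shannon} (Shannon source coding for non-prefix codes), $H(Y^{(i)}) \le 2\E l(Y^{(i)}) + 1 \le 2 B^{(i)} + 1$, and $d = 2^{j_n} \asymp \delta_n^{-1/(1+2s)}$, so the $i$th summand is $\lesssim (n/m)\delta_n \min\{B^{(i)}\log(md), \delta_n^{-1/(1+2s)}\} \asymp (n/m)\,\delta_n^{2s/(1+2s)}\min\{\delta_n^{1/(1+2s)} B^{(i)}\log n, 1\}$ (absorbing $\log(md)$ into $\log n$). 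Summing over $i$ and using the \emph{second} term in the definition \eqref{def: delta_n} of $\delta_n$, namely $\delta_n \le \frac{m}{n}\big/\sum_i[\delta_n^{1/(1+2s)}B^{(i)}\log n \wedge 1]$, this sum is $\lesssim 2^{j_n}$; by choosing the implicit constants in $\mathcal{F}_0$ and $j_n$ appropriately one can in fact force it to be a small fraction of $\log|\mathcal{F}_0| = c\,2^{j_n}$, so that $1 - \frac{I(F;Y)+\log 2}{\log(|\mathcal{F}_0|/N_t^{\max})}$ is bounded away from zero.

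Combining, Theorem \ref{lem: Fano:Wainwright} yields $\inf_{\hat f}\sup_{f_0 \in \mathcal{F}_0} \E_{f_0}\|\hat f - f_0\|_2^2 \gtrsim t^2 \asymp \delta_n^{2s/(1+2s)}$, and since $\mathcal{F}_0 \subset B_{2,\infty}^s(L)$ this is the claimed lower bound. The main obstacle I anticipate is the bookkeeping in the mutual-information step: one must be careful that the two logarithmic factors ($\log(md)$ from Lemma \ref{lem: lem5_redo} versus $\log n$ in the definition \eqref{def: delta_n} of $\delta_n$) and the Shannon-coding factor of $2$ are all absorbed consistently, and that the variance condition needed for Lemma \ref{lem: lem5_redo} genuinely follows from $\delta_n \le (m/n)/\log n$ rather than just $\delta_n \le m/n$ — this is exactly why $\log n$ appears in \eqref{def: delta_n}. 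A secondary point is ensuring $\delta_n = o(1)$ and that $j_n \to \infty$ so that the combinatorial (Varshamov–Gilbert) construction and the asymptotic estimates $|K_{j_n}| \gtrsim 2^{j_n}$ are valid; this is where the hypothesis $m \le n$ (and $\delta_n = o(1)$, assumed in the statement) enters.
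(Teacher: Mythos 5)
Your proposal follows essentially the same route as the paper: choose a single resolution level $j_n$ with $2^{j_n}\asymp\delta_n^{-1/(1+2s)}$, place $\pm\delta_n^{1/2}$ wavelet coefficients at that level, apply the modified Fano bound of Theorem \ref{lem: Fano:Wainwright}, and control $I(F;Y)$ via Lemma \ref{lem: lem5_redo} together with the non-prefix Shannon bound of Lemma \ref{lem: Shannon}, splitting across the two branches of the $\min$ in \eqref{def: delta_n} exactly as you describe. One detail in your sketch would cause trouble if carried out literally: you propose pruning the hypercube to a Varshamov--Gilbert $1/4$-separated subfamily, but Lemma \ref{lem: lem5_redo} requires $F=\delta\beta$ with $\beta$ uniform over the \emph{full} cube $\{-1,1\}^d$ and the coordinates $F_j$ independent of $F_{-j}$; a VG subcode destroys that independence, so the lemma would not apply. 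The paper avoids this by keeping the whole hypercube $\FF_0=\{f_\beta:\beta\in\{-1,1\}^{2^{j_n}}\}$ and letting Theorem \ref{lem: Fano:Wainwright} absorb the lack of pairwise separation through the counting quantities $N_t^{\max},N_t^{\min}$ (bounding $N_t^{\max}$ by a sum of binomial coefficients with $t^2\asymp\delta_n 2^{j_n}$), which is what your reference to $N_t^{\max}$ already suggests you had in mind. Dropping the VG step and keeping the full hypercube, your argument is the paper's.
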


\textit{Proof of Theorem \ref{thm: minimaxL2LBgeneral}.}
Note that without loss of generality we can multiply $\delta_n$ with an arbitrary constant.
In the proof we define $\delta_n$ as the solution to
\begin{align}
\delta_n ={2^{-15}}L^{-2}\min\Big\{\frac{m}{n \log n }, \frac{m}{n \sum_{i=1}^m [\delta_n^{\frac{1}{1+2s}}\log (n) B^{(i)} \wedge  1]} \Big\}.\label{def: delta_n_new}
\end{align}
We note, however, that all the computations below hold for arbitrary $\delta_n'\leq \delta_n$ as well.

We prove the desired lower bound for the minimax risk using a 
modified version of Fano's inequality, given in Theorem \ref{lem: Fano:Wainwright}.
As a first step we construct a  finite subset $\FF_0 \subset B_{2,\infty}^s(L)$. 
We use the wavelet notation outlined in Appendix \ref{sec: wavelets} and
define $j_n=\lfloor(\log \delta_n^{-1})/(1+2s)\rfloor$. 
For $\beta \in\{-1,1\}^{2^{j_n}}$, let $f_\beta \in L_2[0,1]$ be 
the function with  wavelet coefficients  
 \begin{align}\label{eq: function_minimax}
 f_{\beta, jk}=
 \begin{cases}
L\beta_{k}\delta_n^{1/2}, & \text{if}\quad j=j_n,\, k=1,...,2^{j_n},\\
 0, & \text{else}. 
 \end{cases}
\end{align}
Now define $\FF_0 = \{f_\beta: \beta \in\{-1,1\}^{2^{j_n}}\}$.
 Note that $\FF_0 \subset B_{2,\infty}^s(L)$, since
\begin{align*}
\|f_\beta\|_{B_{{2},\infty}^s}^2=\sup_{j}2^{2sj}\sum_{k=1}^{2^j}f_{\beta, jk}^2 =L^22^{(2s+1)j_n}\delta_n \leq  L^2.
\end{align*}
Therefore, for an arbitrary set of estimators $\hat{\mathcal{F}}$ we have that
\begin{align*}
\inf_{\hat{f}\in \hat{\mathcal{F}}}\sup_{f_0\in B_{2,\infty}^{{s}}(L)}\mathbb{E}_{f_0}\|\hat{f}-f_0\|_2^2\geq \inf_{\hat{f}\in\hat{\mathcal{F}}}\sup_{f_0\in \mathcal{F}_0}\mathbb{E}_{f_0}\|\hat{f}-f_0\|_2^2.
\end{align*}
To prove the statement of the theorem we take the set of distributed estimators $\hat{\mathcal{F}}=\mathcal{F}_{dist}(B^{(1)},\ldots, B^{(m)};B_{2,\infty}^s(L))$, but the inequality holds more generally.

For this set of functions $\FF_0$, the maximum and minimum number of elements
in balls of radius $t >0$, given by  
\begin{align*}
N_{t}^{\max}= \max_{f_\beta\in\FF_0}\Big\{\#\{ f_{\beta'}\in\FF_0:\, 
\|f_\beta-f_{\beta'}\|_2\leq t\} \Big\},\\
N_{t}^{\min}= \min_{f_\beta\in\FF_0}\Big\{\#\{ f_{\beta'}\in\FF_0:\, 
\|f_\beta-f_{\beta'}\|_2\leq t\} \Big),
\end{align*}
satisfy $N_{t}^{\max}=N_t^{\min}$ and {$N_{t}^{\max}=\sum_{i=o}^{\tilde{t}}{2^{j_n} \choose i}<|\FF_0|/2$ for $\tilde{t}:=\frac{t^2}{4\delta_n L^2}<2^{j_n-1} $} (and therefore $N_{t}^{\max}<|\FF_0|-N_t^{\min}$).

Recall the notations $X = (X^{(1)}, \ldots, X^{(m)})$ for the data available at the local 
machines and $Y = (Y^{(1)}, \ldots, Y^{m)})$ for the binary messages transmitted to the 
central machine satisfying the distribution protocol, and consider the Markov chain
 $F \to X \to Y$, where $F$ is a uniform random 
element in $\FF_0$. 
It then follows from Theorem \ref{lem: Fano:Wainwright} (with {$t^2=L^2\delta_n2^{j_n+1} /3$ and $d(f,g)=\|f-g\|_2$})  that
\begin{align}
\inf_{\hat{f}\in \hat{\mathcal{F}}}\sup_{f_0\in \mathcal{F}_0}\mathbb{E}_{f_0}\|\hat{f}-f_0\|_2^2
\gtrsim L^2\delta_n 2^{j_n} \Big(1-\frac{I(F;Y)+\log 2}{\log(|\FF_0|/N_{t}^{\max})}\Big),\label{eq: LB:risk2}
\end{align}
where $I(F;Y)$ is the mutual information between the random variables $F$ and $Y$.

%Next let us take take a uniform random variable $M$ on the set $ \mathcal{F}_0$ and consider the Markov chain $M\rightarrow X\rightarrow Y\rightarrow \hat{f}$. Let us denote by $\mathcal{H}(B^{(1)},..,B^{(m)};\mathcal{F})$ the collection of $X=(X^{(1)},...,X^{(m)})$ measurable, binary random vectors $Y=(Y^{(1)},...,Y^{(m)})$, such that $Y^{(i)}$ given $X^{(i)}$ is independent from $Y^{(-i)},X^{(-i)}$ and over the class $\mathcal{F}$ the expected value of bits decoding $Y^{(i)}$ is bounded above by $B^{(i)}$, i.e. for all $f\in\mathcal{F}$ we have $\mathbb{E}_{f}[l(C_{Y^{(i)}})]\leq B^{(i)}$, $i=1,...,m$. Then by applying Lemma 

To lower bound the right-hand side, first
 note that $N_{t}^{\max}=\sum_{i=1}^{{\tilde{t}}}{2^{j_n} \choose i}< 2 {2^{j_n}  \choose {\tilde{t}}}\leq 2(e2^{j_n}/{\tilde{t}})^{\tilde{t}}$ and therefore, for ${\tilde{t}}=2^{j_n-1}/3$ {(i.e. $t^2=L^2\delta_n2^{j_n+1}/3$)},
\begin{align*}
\log(|\FF_0|/N_t^{\max})\geq 2^{j_n} \log (2 (6e)^{-1/6}2^{-2^{-j_n}})\geq 2^{j_n-1}/3.
\end{align*}
Hence, recalling that  $2^{j_n}=\delta_n^{-\frac{1}{1+2s}}$ we see that to prove
\begin{align}
\inf_{\hat{f}\in \hat{\mathcal{F}}}\sup_{f_0\in \mathcal{F}_0}\mathbb{E}_{f_0}\|\hat{f}-f_0\|_2^2\gtrsim \delta_n^{2s/(1+2s)}\label{eq: help:LB}
\end{align}
and as a consequence
to derive the statement of the theorem it is sufficient to 
show that 
%can be reformulated as
%\begin{align*}
%&\inf_{\hat{f}\in \mathcal{F}_{dist}(B^{(1)},...B^{(m)};B_{2,\infty}^s(L))}\sup_{f_0\in B_{2,\infty}^s(L)}\mathbb{E}_{f_0}\|\hat{f}-f_0\|_2^2\nonumber\\
%&\qquad\qquad\gtrsim \inf_{Y\in \mathcal{H}(B^{(1)},..,B^{(m)};B_{2,\infty}^s(L))}\delta_n^{\frac{2s}{1+2s}}\Big( 1-\frac{I(M;Y)+\log 2}{\delta_n^{-1/(1+2s)}/6}\Big).
%\end{align*}
%
%
%
%Hence by noting that $ \mathcal{F}_0\subset B_{2,\infty}^{s}(1)$ it is sufficient to verify the inequality
\begin{align}
I(F;Y)\leq \delta_n^{-1/(1+2s)}/8+O(1)\label{eq: hulp:endproof}.
\end{align}

Observe that for the class of distributed estimators $\hat{\mathcal{F}}=\mathcal{F}_{dist}(B^{(1)},\ldots, B^{(m)};B_{2,\infty}^s(L))$, by definition the following inequality holds
\begin{align}
E^{(i)}l(Y^{(i)})= \frac{1}{|\mathcal{F}_0|}\sum_{f\in\mathcal{F}_0} E_f^{(i)} l(Y^{(i)})\leq B^{(i)},\label{eq: UB:info:length}
\end{align}
where the expectation is taken over the joint distribution of the random variable $F$ and $P_f^{(i)}$, $f\in\mathcal{F}_0$. Next note that for $\delta_n\leq  m/({2^{11}}L^2 n\log n)$ the conditions of Lemma \ref{lem: lem5_redo} are satisfied hence by applying the lemma {(with $\delta^2=L^2\delta_n$ and $d=\delta_n^{-\frac{1}{1+2s}}$)} we get
%(with $\delta=\delta_n^{1/2}$, $d:=2^{j_n}=\delta_n^{-\frac{1}{1+2s}}$, $f=M$, $X=X^{(i)}$, $Y=Y^{(i)}$, $i=1,...,m$), for arbitrary $Y\in \mathcal{H}(B^{(1)},..,B^{(m)}; \mathcal{F}_0)$
\begin{align}
I(F;Y)
&\leq 2L^2n\delta_nm^{-1}\sum_{i=1}^m\min\Big\{ 2^{10}\log(m\delta_n^{-\frac{1}{1+2s}}) H(Y^{(i)}),\delta_n^{-\frac{1}{1+2s}}\Big\}+4\log 2\nonumber\\
&\leq 2L^2n\delta_n m^{-1}\delta_n^{-\frac{1}{1+2s}}\sum_{i=1}^m\Big( 2^{11}\log(n)\delta_n^{\frac{1}{1+2s}} B^{(i)}\wedge 1\Big)+O(1),\label{eq: help:mutual:info}
\end{align}
where the last inequality follows from Lemma \ref{lem: Shannon} and assertion \eqref{eq: UB:info:length}. Since from the definition 
of $\delta_n$ it follows that 
$$\delta_n\leq \frac{2^{-4}L^{-2}m n^{-1}}{  \sum_{i=1}^m\big[2^{11}\log(n)\delta_n^{\frac{1}{1+2s}} B^{(i)}\wedge 1\big] },$$
the right-hand side of \eqref{eq: help:mutual:info} is further bounded by $2^{-3}\delta_n^{-\frac{1}{1+2s}}+O(1)$, finishing the proof of assertion \eqref{eq: hulp:endproof} and concluding the proof of the theorem.

Note that we have used the properties of the distributed estimation class $\hat{\mathcal{F}}$ only in assertion \eqref{eq: UB:info:length}, hence for any distributed method satisfying this inequality we have that 
\begin{align}
\inf_{\hat{f}\in\hat{\mathcal{F}}}\sup_{f_0\in B_{2,\infty}^s(L)} E_{f_0}\|\hat{f}-f_0\|_2^2 \gtrsim \delta_n^{\frac{2s}{1+2s}}.\label{eq: LB:average:bits}
\end{align}

\subsection{Proof of Theorem \ref{theorem: minimaxL2UB}}\label{sec: minimaxL2UB}
First we give the algorithm achieving the upper bound. Let us introduce the notation $\eta =\big( \lfloor(n^{\frac{1}{1+2s}}\log (n)/B)^{{(1+2s)}/({2+2s})}\rfloor\vee 1\big)\wedge m$. Then  we group the local machines into $\eta$ groups 
and let the different groups work on different parts of the signal as follows:
the  machines with indexes $1\leq i\leq  m/\eta$ each transmit the approximations $Y_{jk}^{(i)}$ of the observations $X_{jk}^{(i)}$ for $1\leq 2^{j}+k\leq (B/\log n)\wedge n^{1/(1+2s)}$ using Algorithm \ref{alg: transmit:number}. If $\eta>1$ then the next machines, 
with indexes $m/\eta< i\leq 2 m/\eta$, each transmit the approximations $Y_{jk}^{(i)}$
for $B/\log n< 2^{j}+k\leq 2 B/\log n$,  and so on.  The last machines with numbers 
$(\eta-1) m/\eta< i\leq  m $  transmit $Y_{jk}^{(i)}$ for $(\eta-1) B/\log n< 2^{j}+k\leq \eta B/\log n$. Then in the central machine we average the corresponding transmitted approximated noisy coefficients in the obvious way. Formally, using the notation $\mu_{jk}= \big\lceil(2^{j}+k)\log (n)/ B\big\rceil-1$,  the aggregated estimator $\hat f$ is the function with wavelet 
coefficients given by 
\begin{align*}
\hat{f}_{jk}=
\begin{cases}
mean\{Y_{jk}^{(i)}:\, \frac{\mu_{jk} m}{\eta} < i \leq \frac{(\mu_{jk}+1) m}{\eta}\},& \text{if $2^{j}+k\leq \frac{\eta B}{\log n}$},\\
0, & \text{else}.
\end{cases}
\end{align*}
The procedure is summarized as Algorithm \ref{alg: nonadapt:L2:case2}.

\begin{algorithm}
\caption{Algorithm for the $L_2$-norm}\label{alg: nonadapt:L2:case2}
\begin{algorithmic}[1]
\BState \textbf{In the local machines}:
\For{$\ell= 1$ to $\eta$ }
\For {$i =\lfloor (\ell-1) m/\eta\rfloor +1$ to $\lfloor\ell m/\eta\rfloor$}
\For {$ 2^{j}+k= \lfloor (\ell-1) B/\log n\rfloor+1$ to $\lfloor\ell B/\log n\rfloor$}
\State \text{$Y_{jk}^{(i)}$ :=TransApprox($X_{jk}^{(i)}$)}
\EndFor
\EndFor
\EndFor
\BState \textbf{In the central machine}:
\For {$ 2^j+k=1$ to $\lfloor(\eta B/\log n)\wedge n^{1/(1+2s)}\rfloor$}
\State \text{$\hat{f}_{jk}:=mean\{Y_{jk}^{(i)}:\, \mu_{jk}  m/\eta<  i \leq (\mu_{jk}+1)  m/\eta \}$}
\EndFor
\State Construct: $\hat f = \sum \hat f_{jk}\psi_{jk}$.
\end{algorithmic}
\end{algorithm}

 In the algorithm described above each machine transmits the approximations of  at most $n^{1/(1+2s)}\wedge (B/\log n)$ noisy coefficients. Note that for any $f\in B_{2,\infty}^s(L)$ we have that $f_{jk}^2\leq \sup_{j}2^{js}\sum_k f_{jk}^2\leq L^2$, hence in view of Lemma \ref{lem: approx} (with $|\mu|=|f_{0,jk}|\leq L$) the approximation satisfies 
\begin{align*}
0\leq |X_{jk}^{(i)}-Y_{jk}^{(i)}|1_{\mathcal{E}}\leq 1/\sqrt{n}, \quad |Y_{jk}^{(i)}|\leq \sqrt{n},\quad\text{and}\quad l(Y_{jk}^{(i)})\leq\log n,
\end{align*}
where the set $\mathcal{E}$ was defined in \eqref{def: setE} and satisfies that $P_X(\mathcal{E})\leq e^{-cn}$, for some $c>0$. Therefore  we need at most $B$ bits to transmit
  $ n^{1/(1+2s)}\wedge (B/\log n)$ coefficients, hence $\hat{f}\in\mathcal{F}_{dist}(B,...,B;B_{2,\infty}^s(L))$. 

%Next for convenience we introduce the notation $\eps_{jk}^{(i)}=X_{jk}^{(i)}- Y_{jk}^{(i)}\in[0,n^{-1/2}]$. The estimator $\hat{f}$ is given by its wavelet coefficients $\hat{f}_{jk}$, $j\in\mathbb{N}, k\in K_j$. For $2^j+k>n^{1/(1+2s)}\wedge (B/\log n)$ we have $ \hat{f}_{jk}=0$, 
%while for $2^j+k\leq n^{1/(1+2s)}\wedge (B/\log n)$,
%\begin{align*}
%\hat{f}_{jk}=\frac{1}{m}\sum_{i=1}^m Y_{jk}^{(i)}=\frac{1}{m}\sum_{i=1}^m  f_{0,jk}+\frac{1}{\sqrt{n}}Z_{jk}-\eps_{jk},
%\end{align*}
%where $ \eps_{jk}=m^{-1}\sum_{i=1}^m \eps_{jk}^{(i)}\in[0,n^{-1/2}]$ and $Z_{jk}\stackrel{iid}{\sim}N(0,1)$.

%For convenience we also introduce the notation $j_n=\big\lfloor \log( n^{\frac{1}{1+2s}}\wedge (B/\log n))\big\rfloor$. Then the risk is bounded from above by
%\begin{align}
%\mathbb{E}_{f_0}\|\hat{f}-f_0\|_2^2&\leq \sum_{j\geq j_n}\sum_{k=1}^{2^j}f_{0,jk}^2+\sum_{j=0}^{j_n}\sum_{k=1}^{2^j}\mathbb{E}_{f_0}(\frac{1}{n}Z_{jk}^2+\eps_{jk}^2)\nonumber\\
%&\lesssim  \sum_{j\geq j_n}2^{-2js}\sup_{j\geq j_n} 2^{2js}\sum_{k=1}^{2^j}f_{0,jk}^2+
%\sum_{j=0}^{j_n}\sum_{k=1}^{2^j} n^{-1}\nonumber \\
%&\lesssim 2^{-2j_ns} +2^{j_n}/n\lesssim n^{-2s/(1+2s)}\vee (B/\log n)^{-2s},\label{eq: risk:large:B}
%\end{align}
%where we have used that for $f_0\in B_{2,\infty}^{s}(L)$ we have $|f_{0,jk}|\leq L$ for any $j\geq 1,\, k=1,...,2^j$.

Next for convenience we introduce the notation  $A_{jk}=\{ \lfloor \mu_{jk} m/\eta\rfloor +1,...,\lfloor (\mu_{jk}+1) m/\eta\rfloor \}$ for the collection of machines transmitting the $(j,k)$th coefficient {and note that $\#(A_{jk})\asymp m/\eta$.} Then our aggregated estimator $\hat{f}$ on the set $\mathcal{E}$ satisfies for $2^j+k\leq \eta  B/\log n $ (i.e. the total number of different coefficients transmitted) that 
\begin{align*}
\hat{f}_{jk}&=\frac{1}{\#(A_{jk})}\sum_{i\in A_{jk}}Y_{jk}^{(i)}= f_{0,jk}+{\sqrt{\frac{m}{n \#(A_{jk})}}}Z_{jk}-\eps_{jk},
\end{align*}
where $ \eps_{jk}=\frac{1}{\#(A_{jk}) }\sum_{i\in A_{jk}} \eps_{jk}^{(i)}\in[0,n^{-1/2}]$ and
 $Z_{jk}\stackrel{iid}{\sim}N(0,1)$.

Let  $j_n=\lfloor\log \big(n^{1/(1+2s)} \wedge (\eta B/\log n) \big) \rfloor$.
Then the risk of the aggregated estimator  is bounded as
\begin{align}
\mathbb{E}_{f_0}\| \hat{f}-f_0\|_2^21_{\mathcal{E}}&\leq \sum_{j=j_n}^{\infty}\sum_{k=1}^{2^j} f_{0,jk}^2+
\sum_{j=0}^{j_n}\sum_{k=1}^{2^j}\mathbb{E}_{f_0}({\frac{m}{n \#(A_{jk})}} Z_{jk}^2 +\eps_{jk}^2)1_{\mathcal{E}}\nonumber\\
&\lesssim  \sum_{j=j_n}^{\infty}2^{-2js}\sup_{j\geq j_n}2^{2js}\sum_{k=1}^{2^j} f_{0,jk}^2+\sum_{j=0}^{j_n}\sum_{k=1}^{2^j}{\eta/n} \nonumber\\
&\lesssim  \big(\frac{\eta B}{\log _2n}\wedge n^{1/(1+2s)}\big)^{-2s}+\frac{\eta}{n}\big(\frac{\eta B}{\log _2n}\wedge n^{1/(1+2s)}\big)\nonumber\\
&\asymp 
 \Big\{(\log n)^{\frac{2s}{1+s}}\Big(\frac{n^{1/(1+2s)}}{B\log n}\Big)^{\frac{s}{1+s}}\vee 1 \Big\} n^{-\frac{2s}{1+2s}}\vee \Big( \frac{mB}{\log n}\Big)^{-2s}\nonumber\\
&\lesssim \Big\{(\log n)^{2s}\Big(\frac{n^{1/(1+2s)}}{B\log n}\Big)^{\frac{s}{1+s}}\vee 1 \Big\} n^{-\frac{2s}{1+2s}},\label{UB: alg2}
\end{align}
where we have used that for $f_0\in B_{2,\infty}^{s}(L)$ we have $|f_{0,jk}|\leq L$ for any $j\geq 0,\, k=1,...,2^j$. The above inequality together with 
\begin{align*}
\mathbb{E}_{f_0}\| \hat{f}-f_0\|_2^21_{\mathcal{E}^c}\lesssim n \mathbb{P}_{f_0}(\mathcal{E}^c)\lesssim ne^{-cn}=o(n^{-1}) 
\end{align*}
 concludes the proof of the theorem.

\subsection{Minimax bounds for distributed methods in $L_\infty$-norm}\label{sec: minimaxLinftyLB}

Similarly to the $L_{2}$-case we consider the situation where all communication budgets are the same, i.e. $B^{(1)}=...=B^{(m)}=B$.
\begin{theorem}\label{theorem: minimaxLinftyLB}
Consider   $s,L> 0$,  communication constraint $B^{(1)}=...=B^{(m)}=B> 0$, then
\begin{enumerate}[label=(\roman*b)]
\item
if $B\geq \big(n/(\log n)^{3+4s}\big)^{1/(1+2s)}$, then  \label{(ib)}
\begin{align*}
\inf_{\hat{f}\in\mathcal{F}_{dist}(B,\ldots,B; B_{\infty,\infty}^{s}(L))}\,
\sup_{f_0\in B_{\infty,\infty}^s(L)} \mathbb{E}_{f_0}\|\hat{f}-f_0\|_{\infty} \gtrsim (n/\log n)^{-\frac{s}{1+2s}}.
\end{align*}
\item
if  $(n\log (n)/m^{2+2s})^{{1}/({1+2s})}\leq B <  \big(n/(\log n)^{3+4s}\big)^{1/(1+2s)}$, then \label{(iib)}
\begin{align*}
\inf_{\hat{f}\in\mathcal{F}_{dist}(B,\ldots,B; B_{\infty,\infty}^{s}(L))}\,
\sup_{f_0\in B_{\infty,\infty}^s(L)} \mathbb{E}_{f_0}\|\hat{f}-f_0\|_{\infty} \gtrsim   \big(\frac{n^{\frac{1}{1+2s}}}{B(\log n)^{\frac{3+4s}{1+2s}}}\big)^{\frac{s}{2+2s}} (\frac{n}{\log n})^{-\frac{s}{1+2s}}.
\end{align*}
\item
if $(n\log (n)/m^{2+2s})^{{1}/({1+2s})}> B$\label{(iiib)}, then
\begin{align*}
\inf_{\hat{f}\in\mathcal{F}_{dist}(B,\ldots,B; B_{\infty,\infty}^{s}(L))}\, \sup_{f_0\in B_{\infty,\infty}^s(L)} 
\mathbb{E}_{f_0}\|\hat{f}-f_0\|_{\infty} \gtrsim  \Big(\frac{n\log n}{m}\Big)^{-\frac{s}{1+2s}}.
\end{align*}
\end{enumerate}
\end{theorem}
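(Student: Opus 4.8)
The plan is to deduce all three cases from one general lower bound --- the sup-norm analogue of Theorem~\ref{thm: minimaxL2LBgeneral}, valid for arbitrary budgets $B^{(1)},\dots,B^{(m)}$. The statement to aim for is that, for the sequence $\delta_n=o(1)$ defined implicitly by an equation of the (schematic) form
\begin{align*}
\delta_n = c\,\min\Big\{\frac{m}{n\log n},\ \frac{\log n}{n},\ \frac{\log n}{\,n\sum_{i=1}^{m}\big[\delta_n^{1/(1+2s)}(\log n)^{2}B^{(i)}\wedge 1\big]\,}\Big\}
\end{align*}
(the precise powers of $\log n$ I leave unspecified in this sketch), one has $\inf_{\hat f\in\mathcal{F}_{dist}(B^{(1)},\dots,B^{(m)})}\sup_{f_0\in B_{\infty,\infty}^{s}(L)}\EE_{f_0}\|\hat f-f_0\|_\infty\gtrsim\delta_n^{s/(1+2s)}$. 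Specializing to $B^{(i)}=B$ and deciding which term of the minimum is binding then yields the three regimes: the first term is active for very small $B$, giving $\delta_n\asymp m/(n\log n)$ and the rate $(n\log n/m)^{-s/(1+2s)}$ of \ref{(iiib)}; the second term caps $\delta_n$ at $\asymp\log n/n$ (a $B$-independent cap, coming from the full-data information bound) for large $B$, giving $(n/\log n)^{-s/(1+2s)}$ of \ref{(ib)}; and in between the last term dominates, yielding $\delta_n^{(2+2s)/(1+2s)}\asymp(nB\log n)^{-1}$ up to $\log$-factors, i.e.\ the rate of \ref{(iib)}. Because the lower bound is monotone in $\delta_n$ one may replace it by any smaller quantity, so the stated thresholds and exponents come out after elementary algebra.

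For the general bound I would apply the modified Fano inequality (Theorem~\ref{lem: Fano:Wainwright}) with $d(f,g)=\|f-g\|_\infty$ and the \emph{one-bump} family that is natural for the sup-norm. Let $j_n=\lfloor(\log\delta_n^{-1})/(1+2s)\rfloor$, let $K_{j_n}$ be a maximal collection of Daubechies wavelets at level $j_n$ with pairwise disjoint supports (so $|K_{j_n}|\gtrsim 2^{j_n}$), and set $\mathcal{F}_0=\{\,f_k:=L\,2^{-j_n(s+1/2)}\psi_{j_n,k}\ :\ k\in K_{j_n}\,\}$. With the wavelet estimates of Appendix~\ref{sec: wavelets} one checks $f_k\in B_{\infty,\infty}^s(L)$, while disjointness of supports gives $\|f_k-f_{k'}\|_\infty\asymp 2^{-j_ns}\asymp\delta_n^{s/(1+2s)}$ for $k\ne k'$; hence for $t$ slightly below this separation $N_t^{\max}=1$, so $\log(|\mathcal{F}_0|/N_t^{\max})\asymp j_n\asymp\log\delta_n^{-1}$ --- a \emph{logarithmic}, rather than polynomial, denominator, which is precisely the source of the extra $\log n$ relative to the $L_2$ problem. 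Taking $F$ uniform on $\mathcal{F}_0$ and $F\to X\to Y$ the distributed protocol, Theorem~\ref{lem: Fano:Wainwright} reduces the whole statement to the information bound $I(F;Y)\le\tfrac12 j_n+O(1)$.

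The crux is this information bound. Since the messages $Y^{(i)}$ are conditionally independent given $F$ one has $I(F;Y)\le\sum_i I(F;Y^{(i)})$, and for each machine I need a contraction estimate in the spirit of Lemma~\ref{lem: lem5_redo} --- but that lemma and Theorem~\ref{thm: mutualUB:independent} are stated for coordinatewise independent priors, whereas here the coordinates of $F$ are maximally dependent (exactly one is nonzero), so a variant is required. The target is
\begin{align*}
I(F;Y^{(i)})\ \lesssim\ \frac{L^2\delta_n n}{m}\,\min\big\{\,1,\ (\log n)^{2}B^{(i)}\,2^{-j_n}\,\big\},
\end{align*}
where the ``$1$'' is the crude bound $I(F;Y^{(i)})\le I(F;X^{(i)})\le\min_Q\EE_F K(P_F^{(i)}\,\|\,Q)\le K(P_{f_k}^{(i)}\,\|\,P_0^{(i)})=L^2\delta_n n/(2m)$, and the communication-limited term follows from a strong-data-processing argument: condition on the event $\{\,|X_k^{(i)}|\le a\ \text{for all}\ k\,\}$ with $a^2\asymp(\log n)\,m/n$, on which every relevant Gaussian likelihood ratio is $e^{O(\sqrt{(\log n)\,\delta_n n/m})}=1+o(1)$ (this is exactly where the $m/(n\log n)$ cap enters), control the complement by a Gaussian tail, and then pass from $I(F;Y^{(i)})$ to $I(X^{(i)};Y^{(i)})\le H(Y^{(i)})$ together with Shannon's inequality (Lemma~\ref{lem: Shannon}), $H(Y^{(i)})\le 2\EE l(Y^{(i)})+1\lesssim B^{(i)}$; the extra $2^{-j_n}$ (absent in the product-prior Lemma~\ref{lem: lem5_redo}) reflects that the $O(L^2\delta_n n/m)$ worth of information the full data carry about the bump location is spread over all $2^{j_n}$ coordinates, so each transmitted bit contracts by a further $2^{-j_n}$. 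Feeding $H(Y^{(i)})\lesssim B^{(i)}$ in, summing over $i$, and using $2^{-j_n}=\delta_n^{1/(1+2s)}$, the requirement $I(F;Y)\le\tfrac12 j_n$ becomes exactly the defining inequality for $\delta_n$ above. Since only $\EE^{(i)}l(Y^{(i)})\le B^{(i)}$ is used, the same computation gives the $L_\infty$ analogue of \eqref{eq: LB:average:bits} for arbitrary distributed methods with bounded expected message length, as needed in the proof of Theorem~\ref{thm: adaptLinftynegative}. The one genuinely delicate step I anticipate is this last contraction lemma: a $\log$-sharp mutual-information bound for a non-product, one-sparse prior transmitted through a rate-limited channel.
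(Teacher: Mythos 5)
Your overall architecture (a general lower bound in terms of an implicitly defined $\delta_n$, then case analysis on which term of the minimum binds) matches the paper's, and your algebra for translating the three regimes into the three stated rates is consistent with the paper's Theorem \ref{thm: minimaxLinftyLBgeneral}. But your hypothesis set and, consequently, your information-theoretic requirement are genuinely different from the paper's, and this is where the proposal has a real gap. The paper does \emph{not} use a one-bump family: it takes the full sign hypercube $\FF_0=\{f_\beta:\beta\in\{-1,1\}^{|K_{j_n}|}\}$ supported on the disjoint-support wavelets $K_{j_n}$. Because the supports are disjoint, \emph{any} two distinct vertices already differ by $\gtrsim 2^{j_n/2}\delta_n^{1/2}\asymp\delta_n^{s/(1+2s)}$ in sup-norm (one differing coordinate suffices), so $N_t^{\max}=1$ while $\log|\FF_0|\gtrsim 2^{j_n}$ remains \emph{polynomial} in $\delta_n^{-1}$. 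The Fano denominator is therefore $\asymp 2^{j_n}$, the prior is a product over coordinates, and Lemma \ref{lem: lem5_redo} / Theorem \ref{thm: mutualUB:independent} apply verbatim. Your one-bump prior instead shrinks the denominator to $\asymp j_n$, which forces you to compensate with a mutual-information bound carrying an extra factor $2^{-j_n}$.

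That compensating bound, $I(F;Y^{(i)})\lesssim\frac{L^2\delta_n n}{m}(\log n)^2B^{(i)}2^{-j_n}$, is the gap. It is a strong data-processing inequality for a \emph{one-sparse, non-product} prior, and it does not follow from the proof sketch you give: the conditioning-on-bounded-likelihood-ratios argument of Theorem \ref{thm: mutualUB:independent} tensorizes precisely because $F_j$ is independent of $F_{-j}$, and it outputs $I(F;Y^{(i)})\lesssim (C-1)^2I(X^{(i)};Y^{(i)})\lesssim(\log n)\frac{\delta_n n}{m}B^{(i)}$ with \emph{no} $2^{-j_n}$ gain. The heuristic that ``the information is spread over $2^{j_n}$ coordinates'' is not a proof, and without the $2^{-j_n}$ factor your Fano computation yields $\delta_n\lesssim(nB\log n)^{-1}$, i.e.\ the rate $(nB\log n)^{-s/(1+2s)}$, which is strictly weaker than the claimed $(nB\log n)^{-s/(2+2s)}$ in regime \ref{(iib)} and does not prove the theorem. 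Bounds of the type you need do exist in the distributed sparse-detection literature, but establishing one is a substantial separate argument that your proposal only names. Two smaller remarks: the paper obtains case \ref{(ib)} for free by observing that distributed estimators are a subset of all estimators and citing the classical $(n/\log n)^{-s/(1+2s)}$ minimax bound, rather than folding a $\log n/n$ cap into $\delta_n$; and since the lower bound only uses $\EE^{(i)}l(Y^{(i)})\le B^{(i)}$, the paper's route also directly delivers the version \eqref{eq: LB:average:bits:infty} needed for Theorem \ref{thm: adaptLinftynegative}, which your route would again only give after the missing lemma is supplied.
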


This theorem is actually a direct consequence of the following more general theorem where the communication thresholds can vary between the machines.

\begin{theorem}\label{thm: minimaxLinftyLBgeneral}
Consider   $s,L> 0$, communication constraints $B^{(1)}, \ldots, B^{(m)} > 0$ and
let the sequence $\delta_n=o(1)$ be defined as the solution to the equation
\eqref{def: delta_n}.
Then in the distributed Gaussian white noise model \eqref{model: GWN} we have that
\begin{align*}
\inf_{\hat{f}\in\mathcal{F}_{dist}(B^{(1)},\ldots,B^{(m)}; B_{\infty,\infty}^s(L))}
\, \sup_{f_0\in  B_{\infty,\infty}^s(L)} \mathbb{E}_{f_0}\|\hat{f}-f_0\|_{\infty} \gtrsim \Big(\frac{n}{\log n}\Big)^{-\frac{s}{1+2s}} \vee  \delta_n^{\frac{s}{1+2s}}.
\end{align*}
\end{theorem}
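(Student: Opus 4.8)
The plan is to prove the two halves of the bound, $(n/\log n)^{-s/(1+2s)}$ and $\delta_n^{s/(1+2s)}$, by two separate many-hypothesis arguments and take the maximum; in general neither term dominates (the first is relevant when communication is plentiful and $m$ large, the second when communication is scarce). The first term is the classical centralised $L_\infty$ minimax rate and the communication budgets play no role in it: since each $Y^{(i)}$ is a randomised function of $X^{(i)}$, data processing gives $I(F;Y)\le I(F;X)$ for any prior $F$ supported in $B_{\infty,\infty}^{s}(L)$, with $X=(X^{(1)},\dots,X^{(m)})$ the pooled data, so it suffices to bound the risk of estimators seeing all of $X$. I would pick $\bar j_n$ with $2^{\bar j_n}\asymp (n/\log n)^{1/(1+2s)}$, let $K_{\bar j_n}$ be a maximal family of interior Daubechies wavelets at level $\bar j_n$ with pairwise disjoint supports (so $|K_{\bar j_n}|\ge c_0 2^{\bar j_n}$), fix a small $c$, and take $\mathcal{F}_0=\{0\}\cup\{c\,2^{-\bar j_n(s+1/2)}\psi_{\bar j_n,k}:k\in K_{\bar j_n}\}$. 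Every element lies in $B_{\infty,\infty}^{s}(L)$ once $c\le L$ (its Besov norm is $\asymp c$), has $L_\infty$-norm $\asymp c\,2^{-\bar j_n s}\asymp c\,(n/\log n)^{-s/(1+2s)}$, and, the supports being disjoint, any two distinct elements are this far apart in $L_\infty$ uniformly, so $N_{t}^{\max}=1$ for $t$ just below that distance and $\log(|\mathcal{F}_0|/N_{t}^{\max})\asymp \bar j_n\asymp\log n$. Since the KL divergence $K(P_f^X\|P_{f'}^X)=\tfrac n2\|f-f'\|_2^2\asymp c^2 n\,2^{-\bar j_n(1+2s)}\asymp c^2\log n$ for distinct $f,f'\in\mathcal{F}_0$, choosing $c$ small (depending only on $s$) makes $I(F;X)\le\tfrac12\log(|\mathcal{F}_0|/N_{t}^{\max})$, and Theorem~\ref{lem: Fano:Wainwright} gives the first bound.

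For the second term I would follow the proof of Theorem~\ref{thm: minimaxL2LBgeneral} essentially verbatim, the only structural change being that the hypercube is built over wavelets with \emph{disjoint} supports, which trivialises the $L_\infty$-geometry of the packing. With $j_n=\lfloor(\log\delta_n^{-1})/(1+2s)\rfloor$ (as in Theorem~\ref{thm: minimaxL2LBgeneral}), $K_{j_n}$ as above, and a small $\kappa$, set $\mathcal{F}_0=\{f_\beta:\beta\in\{-1,1\}^{K_{j_n}}\}$ with $f_\beta=\kappa\,\delta_n^{1/2}\sum_{k\in K_{j_n}}\beta_k\psi_{j_n,k}$. Because $2^{j_n(s+1/2)}\delta_n^{1/2}\asymp 1$, one has $\mathcal{F}_0\subset B_{\infty,\infty}^{s}(L)$ for $\kappa$ small, and disjointness of supports gives $\|f_\beta-f_{\beta'}\|_\infty\asymp\kappa\,2^{j_n/2}\delta_n^{1/2}\asymp\kappa\,\delta_n^{s/(1+2s)}=:D^\ast$ for \emph{all} $\beta\ne\beta'$, independently of the Hamming distance; hence $N_{t}^{\max}=1$ for $t<D^\ast$ (up to a constant) and $\log(|\mathcal{F}_0|/N_{t}^{\max})=|K_{j_n}|\log 2\ge c_0\,\delta_n^{-1/(1+2s)}\log 2$. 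By Theorem~\ref{lem: Fano:Wainwright} it then remains to show $I(F;Y)\le\tfrac12 c_0\,\delta_n^{-1/(1+2s)}\log 2$.

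This mutual-information estimate is the only real computation and proceeds exactly as \eqref{eq: help:mutual:info}. The hypotheses of Lemma~\ref{lem: lem5_redo} hold with $d=|K_{j_n}|\asymp\delta_n^{-1/(1+2s)}$ and amplitude $\kappa\delta_n^{1/2}$, because $\delta_n\le m/(n\log n)$ by \eqref{def: delta_n} — up to the universal constant one is free to absorb into $\delta_n$, exactly as in the opening remark of the proof of Theorem~\ref{thm: minimaxL2LBgeneral} — and the lemma yields
\[
I(F;Y)\ \lesssim\ \kappa^2\,\frac{n\delta_n}{m}\sum_{i=1}^m\min\bigl\{\log(md)\,H(Y^{(i)}),\ \delta_n^{-\frac{1}{1+2s}}\bigr\}+O(1).
\]
Bounding $H(Y^{(i)})\le 2B^{(i)}+1$ by Lemma~\ref{lem: Shannon}, using $E^{(i)}l(Y^{(i)})\le B^{(i)}$ (cf.\ \eqref{eq: UB:info:length}), and then invoking the defining equation \eqref{def: delta_n} of $\delta_n$, one reduces the right-hand side to $C\kappa^2\,\delta_n^{-1/(1+2s)}+O(1)$ with $C$ universal; taking $\kappa$ small makes this at most $\tfrac12 c_0\,\delta_n^{-1/(1+2s)}\log 2$, so Fano gives $\inf_{\hat f}\sup_{f_0}E_{f_0}\|\hat f-f_0\|_\infty\gtrsim D^\ast\gtrsim\delta_n^{s/(1+2s)}$. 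Taking the maximum of the two bounds proves the theorem. I expect the only real nuisance to be the constant bookkeeping in this last step — making one and the same $\delta_n$ serve both as the Fano radius and as the scale controlling $I(F;Y)$ — which is routine given Theorem~\ref{thm: minimaxL2LBgeneral}; the genuinely new ingredient over the $L_2$ case is the disjoint-support packing, which forces all pairwise $L_\infty$-distances to be comparable and thereby makes $N_{t}^{\max}=1$.
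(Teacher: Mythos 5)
Your proposal is correct and follows essentially the same route as the paper: the $(n/\log n)^{-s/(1+2s)}$ term is obtained from the classical (non-distributed) $L_\infty$ lower bound via $I(F;Y)\le I(F;X)$ (the paper simply cites the classical rate, whereas you re-derive it with a disjoint-bump Fano argument), and the $\delta_n^{s/(1+2s)}$ term is obtained exactly as in the paper, via a hypercube over disjoint-support Daubechies wavelets at level $j_n$, the observation that $N_t^{\max}=1$, and the mutual-information bound from Lemma \ref{lem: lem5_redo} combined with Lemma \ref{lem: Shannon} and the bit constraint, with constants absorbed into the definition of $\delta_n$.
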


\begin{proof}
First of all we note that in the non-distributed case where all the information is available in the global machine the minimax $L_{\infty}$-risk is $(n/\log n)^{-\frac{s}{1+2s}}$. Since the class of distributed estimators is clearly a {subset of the class of all} estimators this will be also a lower bound for the distributed case. The rest of the proof goes similarly to the proof of Theorem \ref{sec: minimaxL2LB}.

 First we construct a finite subset $\FF_0\subset B_{\infty,\infty}^s(L)$ and then give a lower bound for the minimax risk over it. Let us denote by ${K}_j$ the largest set of Daubechies wavelets at resolution level $j$ with disjoint supports. Note that $|{K}_j|\geq c_0 2^j$ (for large enough $j$ and sufficiently small $c_0>0$). Let us again multiply $\delta_n$ with a sufficiently small constant and work with this $\delta_n$ in the rest of the proof
\begin{align}
\delta_n:={c_02^{-13}}L^{-2}\min\Big\{\frac{m}{n \log n }, \frac{m}{n \sum_{i=1}^m [\delta_n^{\frac{1}{1+2s}}\log (n) B^{(i)} \wedge  1]} \Big\}.\label{def: delta_n_new2}
\end{align}

 Let $j_n=\lfloor(\log \delta_n^{-1})/(1+2s)\rfloor$ and 
for $\beta\in\{-1,1\}^{|{K}_{j_n}|}$ let $f_\beta\in L_\infty[0,1]$ be the function with wavelet coefficients
 \begin{align*}
 f_{\beta, jk}=
 \begin{cases}
L\delta_n^{1/2}\beta_{k}, & \text{if}\quad j=j_n,\, k\in  {K}_{j_n},\\
 0, & \text{else}.
 \end{cases}
\end{align*}
Now let 
 $\FF_0=\{f_\beta:\, \beta_{k}\in\{-1,1\},k\in {K}_{j_n}\}$.
% and consider the a maximal subset $ \mathcal{F}_0\subset$ satisfying that for all $\beta,\beta'\in \mathcal{F}_0$ the inequality $\|\beta-\beta'\|_2\geq |{K}_{j_n}|/8$ holds.

Note that each function $f_\beta\in\FF_0$ belongs to the set $B_{\infty,\infty}^s(L)$, since
\begin{align*}
\|f_\beta\|_{B_{\infty,\infty}^s}=\sup_{j,k}2^{(s+1/2)j}f_{\beta, jk}^2=2^{(s+1/2)j_n}\sup_{k\in {K}_{j_n}}L\delta_n^{1/2}=L2^{(s+1/2)j_n}\delta_n^{1/2}\leq  L.
\end{align*} 
Furthermore, if $f_\beta \not = f_{\beta'}$, {then there} exists a $k'\in {K}_{j_n}$ such that $\beta_{k'}\neq \beta'_{k'}$. Then due to the disjoint support of the corresponding 
Daubechies' wavelets $\psi_{j_n,k}$, $k\in {K}_{j_n}$ the $L_{\infty}$-distance between 
the two functions  is bounded from below by
\begin{align*}
\|f_\beta-f_{\beta'}\|_{\infty}\geq  |f_{j_nk'}-f'_{j_nk'}|\cdot \|\psi_{j_n,k'}\|_{\infty}
{\gtrsim} 2^{j_n/2+1}\delta_n^{1/2}\geq \delta_n^{\frac{s}{1+2s}}.
\end{align*}

Next observe that for an arbitrary set of estimators $\hat{\mathcal{F}}$
\begin{align*}
\inf_{\hat{f}\in\hat{\mathcal{F}}}\sup_{f_0\in B_{\infty,\infty}^{{s}}(L)}\mathbb{E}_{f_0}\|\hat{f}-f_0\|_\infty\geq \inf_{\hat{f}\in \hat{\mathcal{F}}}\sup_{f_0\in \mathcal{F}_0}\mathbb{E}_{f_0}\|\hat{f}-f_0\|_\infty.
\end{align*}

Now let $F$ be a uniform random variable on the set $\FF_0$. Then in view of Fano's inequality 
(see Theorem \ref{lem: Fano:Wainwright} with $t=\delta_n^{s/(1+2s)}$ and $p=1$) we get that
\begin{align*}
\inf_{\hat{f}\in\hat{\mathcal{F}}}\sup_{f_0\in  \mathcal{F}_0} \mathbb{E}_{f_0}\|\hat{f}-f_0\|_{\infty} 
\gtrsim
\delta_n^{\frac{s}{1+2s}}\Big(1-\frac{I(F;Y)+\log 2}{\log |\FF_0|} \Big).
\end{align*}
Hence, since $\log |\FF_0|\geq |{K}_{j_n}|\geq c_0 2^{j_n}=c_0 \delta_n^{-1/(1+2s)}$,  it remains to show that $I(F;Y)\leq (c_0/2)\delta_n^{-1/(1+2s)}+O(1)$.

In view of Lemma \ref{lem: lem5_redo} (applied with $\delta=\delta_n^{1/2}$, $d=|{K}_{j_n}|=c_0\delta_n^{-\frac{1}{1+2s}}$, $X=X^{(i)}$, $Y=Y^{(i)}$, $i=1,...,m$, and noting that $\delta_n\leq  m/({2^{11}}L^2n\log n)$ hence the conditions are fulfilled)
\begin{align*}
I(F;Y)
&\leq 2L^2 n\delta_n m^{-1}\delta_n^{-\frac{1}{1+2s}}\sum_{i=1}^m\Big( 2^{10}\log(n)\delta_n^{\frac{1}{1+2s}}H(Y^{(i)}) \wedge c_0\Big)+4\log 2,\\
&\leq 2^{12}L^2n\delta_n m^{-1}\delta_n^{-\frac{1}{1+2s}}\sum_{i=1}^m\Big( \log(n)\delta_n^{\frac{1}{1+2s}} B^{(i)}\wedge 1\Big)+O(1)\\
&\leq (c_0/2) \delta_n^{-\frac{1}{1+2s}}+O(1),
\end{align*}
where the second inequality follows from Theorem \ref{lem: Shannon} and assertion \eqref{eq: UB:info:length} for $\hat{\mathcal{F}}=\mathcal{F}_{dist}(B^{(1)},\ldots, B^{(m)};B_{\infty,\infty}^s(L))$ and the third by the definition of $\delta_n$, see \eqref{def: delta_n_new2}. Hence we can conclude that
\begin{align}
\inf_{\hat{f}\in \mathcal{F}_{dist}(B^{(1)},\ldots, B^{(m)};\mathcal{F}_0)}\sup_{f_0\in \mathcal{F}_0}\mathbb{E}_{f_0}\|\hat{f}-f_0\|_\infty\gtrsim \delta_n^{\frac{s}{1+2s}}.\label{eq: help:LB_Linfty}
\end{align}

Note that we have used the properties of the distributed estimation class $\hat{\mathcal{F}}$ only in assertion \eqref{eq: UB:info:length}, hence for any class distributed estimator $\hat{\mathcal{F}}$ satisfying this inequality we have that
\begin{align}
\inf_{\hat{f}\in\hat{\mathcal{F}}}\sup_{f_0\in B_{\infty,\infty}^s(L)} E_{f_0}\|\hat{f}-f_0\|_\infty \gtrsim \delta_n^{\frac{s}{1+2s}}.\label{eq: LB:average:bits:infty}
\end{align}
\end{proof}

Next we give an algorithm providing matching upper bounds in the first two cases. Note that the last case, similarly to the $L_2$-norm is less relevant as using the data available only on a single machine would provide at least as good an estimator as any distributed algorithm. The algorithm is very similar to the $L_2$-case, i.e. Algorithm \ref{alg: nonadapt:L2:case2}, and is basically the rewrite of Algorithm 4 of \cite{szabo:zanten:2018} tailored to the Gaussian white noise model. Here we just highlight the differences compared to Algorithm \ref{alg: nonadapt:L2:case2}. We divide the machines into $\eta = (\lfloor\big(L^2n(\log_2 n)^{2s}/B^{1+2s}\big)^{\frac{1}{2+2s}}\rfloor \wedge m)\vee 1$ equal sized groups ($\eta=1$ corresponds to case (ib), while $\eta>1$ corresponds to case (iib)). Similarly to before machines with indexes $1\leq i\leq  m/\eta$ transmit the approximations $Y_{jk}^{(i)}$ for $1\leq 2^{j}+k\leq \lfloor B/\log_2 n\rfloor\wedge (n/\log_2 n)^{\frac{1}{1+2s}}$, and so on, the last machines with numbers 
$(\eta-1) m/\eta< i\leq  m $  transmit the approximations $Y_{jk}^{(i)}$ for $\big((\eta-1)\lfloor B/\log_2 n\rfloor\big)\wedge (n/\log_2 n)^{\frac{1}{1+2s}}< 2^{j}+k\leq \big(\eta\lfloor B/\log_2 n\rfloor\big)\wedge (n/\log_2 n)^{\frac{1}{1+2s}}$. Then in the central machine we average the corresponding transmitted coefficients in the obvious way, similarly to the $L_2$-norm case.
The procedure is summarized as Algorithm \ref{alg: nonadapt:Linfty} and the (up to a logarithmic factor) optimal behaviour is given in 
Theorem \ref{theorem: minimaxLinftyUB} below.

\begin{algorithm}
\caption{Nonadaptive $L_\infty$-method, combined}\label{alg: nonadapt:Linfty}
\begin{algorithmic}[1]
\BState \textbf{In the local machines}:
\For{$\ell= 1$ to $\eta$ }
\For {$i =\lfloor (\ell-1) m/\eta\rfloor +1$ to $\lfloor\ell m/\eta\rfloor$}
\For {$ 2^{j}+k= (\ell-1)\lfloor  B/\log_2 n\rfloor+1$ to $\ell \lfloor B/\log_2 n\rfloor$}
\State \text{$Y_{jk}^{(i)}$ :=TransApprox($X_{jk}^{(i)}$).}
\EndFor
\EndFor
\EndFor
\BState \textbf{In the central machine}:
\For {$ 2^j+k=1$ to $\eta\lfloor B/\log_2 n\rfloor$}
\State \text{$\hat{f}_{jk}:=mean\{Y_{jk}^{(i)}:\, \mu_{jk}  m/\eta<  i \leq (\mu_{jk}+1)  m/\eta \}$}.
\EndFor
\State Construct: $\hat f = \sum \hat f_{jk}\psi_{jk}$.
\end{algorithmic}
\end{algorithm}

\begin{theorem}\label{theorem: minimaxLinftyUB}
Let $s,L> 0$, then the distributed estimator $\hat f$ described 
 in Algorithm \ref{alg: nonadapt:Linfty} belongs to 
 $\mathcal{F}_{dist}(B,\ldots,B; B_{\infty,\infty}^{s}(L))$  and satisfies
\begin{itemize}
\item for $B\geq n^{1/(1+2s)}(\log_2 n)^{2s/(1+2s)}$,
\begin{align*}
\sup_{f_0\in B_{\infty,\infty}^{s}(L)}\mathbb{E}_{f_0}\| \hat{f}-f_0\|_\infty\lesssim (n/\log_2 n)^{-\frac{s}{1+2s}};
\end{align*}
\item for $\big(n(\log_2 n)/m^{2+2s}\big)^{{1}/({1+2s})} \vee \log_2 n\leq B< n^{1/(1+2s)}(\log_2 n)^{2s/(1+2s)}$,
\begin{align*}
\sup_{f_0\in B_{\infty,\infty}^{s}(L)}\mathbb{E}_{f_0}\| \hat{f}-f_0\|_\infty\lesssim  M_n\Big(\frac{n^{\frac{1}{1+2s}} }{B(\log_2 n)^{\frac{3+4s}{1+2s}}}\Big)^{\frac{s}{2+2s}}(n/\log_2 n)^{-\frac{s}{1+2s}},
\end{align*}
with $M_n=(\log_2 n)^{s\vee \frac{3s}{2+2s}}$.
\end{itemize}
\end{theorem}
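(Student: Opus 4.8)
The plan is to mirror the proof of Theorem \ref{theorem: minimaxL2UB}, tracking the extra logarithmic factors that arise when passing from the $L_2$- to the $L_\infty$-norm. First I would check membership in $\mathcal{F}_{dist}(B,\ldots,B;B_{\infty,\infty}^{s}(L))$: by construction each machine transmits the approximations $Y_{jk}^{(i)}$ of at most $\lfloor B/\log_2 n\rfloor\wedge (n/\log_2 n)^{1/(1+2s)}$ coefficients, and for $f_0\in B_{\infty,\infty}^s(L)$ we have $|f_{0,jk}|\le L 2^{-(s+1/2)j}\le L$, so Lemma \ref{lem: approx} (with $\mu=f_{0,jk}$, $D=1/2$) gives $l(Y^{(i)}_{jk})\le \log_2 n$ together with $|X^{(i)}_{jk}-Y^{(i)}_{jk}|1_{\mathcal E}\le n^{-1/2}$ and $|Y^{(i)}_{jk}|\le\sqrt n$ on the event $\mathcal E$ of \eqref{def: setE} with $P_X(\mathcal E^c)\lesssim e^{-cn}$. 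Hence the per-machine budget of $B$ bits is respected.

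Next, exactly as in the $L_2$ proof, on $\mathcal E$ the aggregated estimator has wavelet coefficients
\begin{align*}
\hat f_{jk}=f_{0,jk}+\sqrt{\frac{m}{n\,\#(A_{jk})}}\,Z_{jk}-\eps_{jk},\qquad Z_{jk}\stackrel{iid}{\sim}N(0,1),\ \eps_{jk}\in[0,n^{-1/2}],
\end{align*}
for $2^j+k\le \eta\lfloor B/\log_2 n\rfloor$ and $\hat f_{jk}=0$ otherwise, where $\#(A_{jk})\asymp m/\eta$. Writing $j_n=\lfloor \log_2\big((n/\log_2 n)^{1/(1+2s)}\wedge(\eta B/\log_2 n)\big)\rfloor$ I would split $\|\hat f-f_0\|_\infty\le \|f_0-\Pi_{\le j_n}f_0\|_\infty+\|\hat f-\Pi_{\le j_n}f_0\|_\infty$. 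The bias term is controlled by the Besov embedding: $\|f_0-\Pi_{\le j_n}f_0\|_\infty\le \sum_{j>j_n}2^{j/2}\sup_k|f_{0,jk}|\lesssim \sum_{j>j_n}2^{-js}\lesssim 2^{-j_ns}$. For the stochastic term I would use the Daubechies bound $\|\sum_k c_{jk}\psi_{jk}\|_\infty\lesssim 2^{j/2}\max_k|c_{jk}|$ together with a maximal inequality for the at most $2^{j_n}\lesssim n$ independent Gaussians involved, which yields $E\max_{j\le j_n}\max_k |Z_{jk}|\lesssim \sqrt{\log n}$; since $\#(A_{jk})\asymp m/\eta$ this contributes
\begin{align*}
E_{f_0}\|\hat f-\Pi_{\le j_n}f_0\|_\infty 1_{\mathcal E}\lesssim \sum_{j\le j_n}2^{j/2}\sqrt{\frac{\eta}{n}}\,\sqrt{\log n}+ \sqrt{\frac{2^{j_n}}{n}}\lesssim 2^{j_n/2}\sqrt{\frac{\eta\log n}{n}}.
\end{align*}
Adding the two bounds and plugging in $2^{j_n}\asymp (n/\log_2 n)^{1/(1+2s)}\wedge(\eta B/\log_2 n)$ with the stated choice $\eta=(\lfloor(L^2 n(\log_2 n)^{2s}/B^{1+2s})^{1/(2+2s)}\rfloor\wedge m)\vee1$ balances bias against standard deviation and produces the two claimed rates, the logarithmic factor $M_n$ absorbing the $\sqrt{\log n}$ from the Gaussian maxima and the $\log_2 n$ from the bit-budget normalisation. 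Finally the contribution of $\mathcal E^c$ is negligible: using $|\hat f_{jk}|,|f_{0,jk}|\le\sqrt n$ and $P_{f_0}(\mathcal E^c)\lesssim e^{-cn}$ one gets $E_{f_0}\|\hat f-f_0\|_\infty 1_{\mathcal E^c}\lesssim n\,e^{-cn}=o(1)$ times the target rate.

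I expect the main obstacle to be the precise accounting of logarithmic factors in the stochastic term: one must argue that taking the supremum over all transmitted resolution levels and locations costs only a single $\sqrt{\log n}$ (via a union bound / Gaussian maximal inequality over $\lesssim n$ coordinates), and then verify that after optimisation this matches the exponent $s\vee\frac{3s}{2+2s}$ in $M_n=(\log_2 n)^{s\vee\frac{3s}{2+2s}}$ in both regimes; the case distinction $\eta=1$ versus $\eta>1$ has to be carried through these bounds carefully. Everything else is a routine adaptation of the $L_2$ argument.
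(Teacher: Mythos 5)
Your proposal is correct and takes essentially the same route the paper indicates: the paper gives no explicit proof of Theorem \ref{theorem: minimaxLinftyUB} but states that it follows the same reasoning as Theorem \ref{theorem: minimaxL2UB} (adapted from Theorem 2.8 of \cite{szabo:zanten:2018}), and you spell out precisely that adaptation — the same event $\mathcal E$ and Algorithm \ref{alg: transmit:number} budget accounting, the same decomposition into bias, noise, and quantisation-error terms, with the $L_2$ sums replaced by the $\sup$-norm bound $\|\sum_k c_{jk}\psi_{jk}\|_\infty\lesssim 2^{j/2}\max_k|c_{jk}|$ and a Gaussian maximal inequality contributing a single $\sqrt{\log n}$. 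Your bound $2^{-j_ns}+2^{j_n/2}\sqrt{\eta\log n/n}$ with $2^{j_n}\asymp(\eta B/\log_2 n)\wedge(n/\log_2 n)^{1/(1+2s)}$ is the right reduction; working out the exponents in the regime $\eta>1$ does reproduce $(\log_2 n)^{3s/(2+2s)}$ times the claimed $n$- and $B$-dependence, consistent with the theorem's $M_n=(\log_2 n)^{s\vee 3s/(2+2s)}$ (the $(\log_2 n)^s$ branch being a slack safety margin for $s>1/2$), so the remaining work you flag is indeed just bookkeeping.
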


The proof of the theorem follows the same reasoning as the proof of Theorem \ref{sec: minimaxL2UB} but for the $L_{\infty}$-norm and it is basically follows from the proof of Theorem 2.8 of  \cite{szabo:zanten:2018} tailored to the Gaussian white noise model.

\section{Definitions and notations for wavelets}\label{sec: wavelets}
In this section we collect some notations and definitions about wavelets, a more detailed description can be found for instance in \cite{hardle:2012, gine:nickl:2016}.

We consider the Cohen, Daubechies and Vial construction of compactly supported, orthonormal, $N$-regular wavelet basis of $L_2[0,1]$, see for instance \cite{cohen:1993} and let the us use the notation $\{\psi_{jk}:\, j=0,1,..,\, k=1,...,2^{j}\}$. For arbitrary function $f\in L_2[0,1]$ we can consider the wavelet representation
\begin{align*}
f=\sum_{j=0}^{\infty}\sum_{k=1}^{2^{j}}f_{jk}\psi_{jk},
\end{align*}
with $f_{jk}=\langle f,\psi_{jk}\rangle$. Following from the orthonormality of the wavelet basis we have that
\begin{align*}
\|f\|_2^2=\sum_{j=0}^{\infty}\sum_{k=1}^{2^{j}}f_{jk}^2.
\end{align*}

In our analysis we work with the Besov spaces $B_{2,\infty}^s$ and $B_{\infty,\infty}^s$. The corresponding Besov norms for $s\in(0,N)$ are defined as
\begin{align*}
\|f\|_{B_{2,\infty}^s}^2=\sup_{j\geq j_0} 2^{2js}\sum_{k=0}^{2^j-1}f_{jk}^2\quad\text{and}\quad \|f\|_{B_{\infty,\infty}^s}= \sup_{j\geq0, k}\{ 2^{j(s+1/2)}|f_{jk}|\}.
\end{align*}
 Then the Besov spaces $B_{2,\infty}^s, B_{\infty,\infty}^s$ and the corresponding Besov balls $B_{2,\infty}^s(L), B_{\infty,\infty}^s(L)$ of radius $L>0$ are defined as
\begin{align*}
&B_{2,\infty}^s=\{f\in L_2[0,1]:\, \|f\|_{B_{2,\infty}^s}<\infty \},\\
&B_{2,\infty}^s(L)=\{f\in L_2[0,1]:\, \|f\|_{B_{2,\infty}^s}<L \},\\
&B_{\infty,\infty}^s=\{f\in L_2[0,1]:\, \|f\|_{B_{\infty,\infty}^s}<\infty \}\quad\text{and}\\
&B_{\infty,\infty}^s(L)=\{f\in L_2[0,1]:\, \|f\|_{B_{\infty,\infty}^s}<L \},
\end{align*}
respectively. We note that the Besov space $B_{2,\infty}^s$ is larger than the standard Sobolev space where instead of the supremum one would take the sum over the resolution levels $j$.  For $s\neq N$ $B_{\infty,\infty}^s$ is equivalent to the classical H\"older space with regularity $s$, while for integer $s$ they are equivalent to the so called Zygmond spaces, see \cite{cohen:1993}.

\bibliographystyle{acm}
\bibliography{references}

\end{document}